\begin{document}

\newcommand{\nc}{\newcommand}
\newcommand{\delete}[1]{}
\nc{\mfootnote}[1]{\footnote{#1}} % Use this to show footnotes
\nc{\todo}[1]{\tred{To do:} #1}

%\delete{
\nc{\mlabel}[1]{\label{#1}}  % Use this to suppress names
\nc{\mcite}[1]{\cite{#1}}  % Use this to suppress names
\nc{\mref}[1]{\ref{#1}}  % Use this to suppress names
\nc{\mbibitem}[1]{\bibitem{#1}} % Use this to show number
%}

\delete{
\nc{\mlabel}[1]{\label{#1}  % Use the next two lines to show names
{\hfill \hspace{1cm}{\bf{{\ }\hfill(#1)}}}}
\nc{\mcite}[1]{\cite{#1}{{\bf{{\ }(#1)}}}}  % Use this lines to show names
\nc{\mref}[1]{\ref{#1}{{\bf{{\ }(#1)}}}}  % Use this lines to show names
\nc{\mbibitem}[1]{\bibitem[\bf #1]{#1}} % Use this to show name
}

%%%%%%%%%%%%%%%%%%%%%%%% Statements
\newtheorem{theorem}{Theorem}[section]
\newtheorem{prop}[theorem]{Proposition}
\newtheorem{defn}[theorem]{Definition}
\newtheorem{lemma}[theorem]{Lemma}
\newtheorem{coro}[theorem]{Corollary}
\newtheorem{prop-def}[theorem]{Proposition-Definition}
\newtheorem{claim}{Claim}[section]
\newtheorem{remark}[theorem]{Remark}
\newtheorem{propprop}{Proposed Proposition}[section]
\newtheorem{conjecture}{Conjecture}
\newtheorem{exam}[theorem]{Example}
\newtheorem{assumption}{Assumption}
\newtheorem{condition}[theorem]{Assumption}
\newtheorem{question}[theorem]{Question}

\renewcommand{\labelenumi}{{\rm(\alph{enumi})}}
\renewcommand{\theenumi}{\alph{enumi}}

\nc{\tred}[1]{\textcolor{red}{#1}}
\nc{\tblue}[1]{\textcolor{blue}{#1}}
\nc{\tgreen}[1]{\textcolor{green}{#1}}
\nc{\tpurple}[1]{\textcolor{purple}{#1}}
\nc{\btred}[1]{\textcolor{red}{\bf #1}}
\nc{\btblue}[1]{\textcolor{blue}{\bf #1}}
\nc{\btgreen}[1]{\textcolor{green}{\bf #1}}
\nc{\btpurple}[1]{\textcolor{purple}{\bf #1}}

%%%%%%%%%%%%%%%%%%%%%%%%%%operations%%%%%%
\nc{\opone}{\quad \begin{picture}(5,5)(0,0)
\vector(-1,1){10}
\end{picture}}

\nc{\optwo}{\begin{picture}(5,5)(0,0)
\vector(0,1){10}
\end{picture}}

\nc{\opthree}{\begin{picture}(5,5)(0,0)
\vector(1,1){10}
\end{picture}\,\,}

\nc{\opfour}{\quad \begin{picture}(5,5)(0,0)
\vector(-1,1){10}\vector(0,1){10}
\end{picture}}

\nc{\opfive}{\quad \begin{picture}(5,5)(0,0)
\vector(-1,1){10}\vector(1,1){10}
\end{picture}\,\,}

\nc{\opsix}{ \quad \begin{picture}(5,5)(0,0)
\vector(0,1){10}\vector(1,1){10}
\end{picture}\,\,}

\nc{\opseven}{\quad \begin{picture}(5,5)(0,0)
\vector(-1,1){10}\vector(0,1){10}\vector(1,1){10}
\end{picture}\,\,}

\nc{\postone}{\{\cdot,\cdot,\cdot\}}
\nc{\postthree}{[\cdot,\cdot,\cdot]}
\nc{\posttwo} {(\cdot,\cdot,\cdot)}

%%%%%%%%%%%%%%%%%%%%%%% symbols
\nc{\graft} {\omega (\tau_{1} \vee \tau_{2} \vee \cdots \vee \tau_{\ell})}

\nc{\conf}{configuration\xspace}
\nc{\confs}{configurations\xspace}
\nc{\confm}{\mathcal{C}}
\nc{\confn}{C}

\nc{\vertset}{\Omega} % set of vertex decorations
\nc{\leafset}{\calx} % set of leaf decorations
\nc{\gensp}{V} % space of generators
\nc{\relsp}{R} %space of relations
\nc{\leafsp}{\mathcal{X}}    %decoration space for leaves
\nc{\treesp}{\mathbb{T}} % space of labeled trees
\nc{\genbas}{\mathcal{V}} % basis of the space of generators
\nc{\opd}{\mathcal{P}} %operad P

\nc{\vin}{{\mathrm Vin}}    %decoration set of indices
\nc{\lin}{{\mathrm Lin}}    %decoration set of leaves
\nc{\inv}{\mathrm{I}n}

\nc{\bvp}{V_P}     % Rota-Baxter generating space

\nc{\gop}{{\,\omega\,}}     % generic binary operation
\nc{\gopb}{{\,\nu\,}}
\nc{\svec}[2]{{\textrm{\tiny{$\left(\begin{matrix}#1\\
#2\end{matrix}\right)$}}}}  % column vector
\nc{\ssvec}[2]{{\textrm{\tiny{$\left(\begin{matrix}#1\\
#2\end{matrix}\right)$}}}} % subscript column vector
\nc{\treeg}[5]{\vcenter{\xymatrix@M=1.5pt@R=1.5pt@C=0pt{#1 & & #2 & & & #3 \\ & #5 \ar@{-}[lu] \ar@{-}[ru] & & & & \\ & & #4 \ar@{-}[lu] \ar@{-}[rrruu] & & \\ & & & & & \\ & & \ar@{-}[uu] & & & }}}
\nc{\treed}[5]{\vcenter{\xymatrix@M=2pt@R=4pt@C=2pt{#1 & & & #2  & & #3 \\ & & & & #5 \ar@{-}[lu] \ar@{-}[ru] & \\ & & & #4 \ar@{-}[ru] \ar@{-}[llluu] & \\ & & & & & \\ & & & \ar@{-}[uu] & & }}}
%\nc{\treegsu}[5]{\vcenter{\xymatrix@M=1.5pt@R=1.5pt@C=2pt{#1 & & #2 & & & #3 \\ & #5 \ar@{-}[lu] \ar@{-}[ru] & & & & \\ & & #4 \ar@{-}[lu] \ar@{-}[rrruu] & & \\ & & & & & \\ & & \ar@{-}[uu] & & & }}}

\nc{\tsvec}[3]{{\textrm{\tiny{$\left(\begin{matrix}#1\\
#2\\#3\end{matrix}\right)$}}}}  % column vector

\nc{\stsvec}[3]{{\textrm{\tiny{$\left(\begin{matrix}#1\\
#2\\#3\end{matrix}\right)$}}}} % subscript column vector

\nc{\ssp}{\calc\mathrm{Sp}}
\nc{\spl}{$\calc$}
\nc{\su}{\mathrm{Su}}
\nc{\dsu}{\cala\mathrm{Sp}}
\nc{\tsu}{\calb\mathrm{Sp}}
\nc{\TSu}{\mathrm{TSu}}
\nc{\srb}{\calc\mathrm{RB}}
\nc{\eval}[1]{{#1}_{\big|D}}
\nc{\oto}{\leftrightarrow}

\nc{\du}{\mathrm{Du}}
\nc{\tdu}{\mathrm{Tri}}
\nc{\rep}{\dagger}

\nc{\oaset}{\mathbf{O}^{\rm alg}}
\nc{\omset}{\mathbf{O}^{\rm mod}}
\nc{\oamap}{\Phi^{\rm alg}}
\nc{\ommap}{\Phi^{\rm mod}}
\nc{\ioaset}{\mathbf{IO}^{\rm alg}}
\nc{\iomset}{\mathbf{IO}^{\rm mod}}
\nc{\ioamap}{\Psi^{\rm alg}}
\nc{\iommap}{\Psi^{\rm mod}}

\nc{\suc}{{successor}\xspace}
\nc{\tsuc}{{trisuccessor}\xspace}
\nc{\Suc}{{Successor}\xspace}
\nc{\sucs}{{successors}\xspace}
\nc{\Sucs}{{Successors}\xspace}
\nc{\Tsuc}{{T-successor}\xspace}
\nc{\Tsucs}{{T-successors}\xspace}

\nc{\ddup}{{duplicator}\xspace}
\nc{\Ddup}{{Duplicator}\xspace}
\nc{\ddups}{{duplicators}\xspace}
\nc{\Ddups}{{Duplicators}\xspace}
\nc{\tdup}{{triplicator}\xspace}
\nc{\tdups}{{triduplicators}\xspace}
\nc{\Tdup}{{Triplicator}\xspace}
\nc{\Tdups}{{Triplicators}\xspace}

\nc{\perm}{{perm}}
\nc{\tperm}{{tri-perm}}

\nc{\Lsuc}{{L-successor}\xspace}
\nc{\Lsucs}{{L-successors}\xspace} \nc{\Rsuc}{{R-successor}\xspace}
\nc{\Rsucs}{{R-successors}\xspace}

\nc{\bia}{{$\mathcal{P}$-bimodule ${\bf k}$-algebra}\xspace}
\nc{\bias}{{$\mathcal{P}$-bimodule ${\bf k}$-algebras}\xspace}

\nc{\rmi}{{\mathrm{I}}}
\nc{\rmii}{{\mathrm{II}}}
\nc{\rmiii}{{\mathrm{III}}}

\nc{\pll}{\beta}
\nc{\plc}{\epsilon}

%%%%%%%%%%%% Operads
\nc{\ass}{{\mathit{Ass}}}
\nc{\comm}{{\mathit{Comm}}}
\nc{\comtrias}{{\mathit{ComTrias}}}
\nc{\TriLeib}{{\mathit{TriLeib}}}
\nc{\dend}{{\mathit{Dend}}}
\nc{\dias}{{\mathit{Dias}}}
\nc{\leib}{{\mathit{Leib}}}
\nc{\ldend}{{\mathit{LDend}}}
\nc{\lie}{{\mathit{Lie}}}
\nc{\lquad}{{\mathit{LQuad}}}
\nc{\octo}{{\mathit{Octo}}}
\nc{\Perm}{{\mathit{Perm}}}
\nc{\postlie}{{\mathit{PostLie}}}
\nc{\prelie}{{\mathit{preLie}}}
\nc{\Prelie}{{\mathit{PreLie}}}
\nc{\quado}{{\mathit{Quad}}}
\nc{\tridend}{{\mathit{TriDend}}}
\nc{\zinb}{{\mathit{Zinb}}}

 \nc{\adec}{\check{;}} \nc{\aop}{\alpha}
\nc{\dftimes}{\widetilde{\otimes}} \nc{\dfl}{\succ} \nc{\dfr}{\prec}
\nc{\dfc}{\circ} \nc{\dfb}{\bullet} \nc{\dft}{\star}
\nc{\dfcf}{{\mathbf k}} \nc{\apr}{\ast} \nc{\spr}{\cdot}
\nc{\twopr}{\circ} \nc{\tspr}{\star} \nc{\sempr}{\ast}
\nc{\disp}[1]{\displaystyle{#1}}
\nc{\bin}[2]{ (_{\stackrel{\scs{#1}}{\scs{#2}}})}  %binomial coeff
\nc{\binc}[2]{ \left (\!\! \begin{array}{c} \scs{#1}\\
    \scs{#2} \end{array}\!\! \right )}  %binomial coeff
\nc{\bincc}[2]{  \left ( {\scs{#1} \atop
    \vspace{-.5cm}\scs{#2}} \right )}  %binomial coeff
\nc{\sarray}[2]{\begin{array}{c}#1 \vspace{.1cm}\\ \hline
    \vspace{-.35cm} \\ #2 \end{array}}
\nc{\bs}{\bar{S}} \nc{\dcup}{\stackrel{\bullet}{\cup}}
\nc{\dbigcup}{\stackrel{\bullet}{\bigcup}} \nc{\etree}{\big |}
\nc{\la}{\longrightarrow} \nc{\fe}{\'{e}} \nc{\rar}{\rightarrow}
\nc{\dar}{\downarrow} \nc{\dap}[1]{\downarrow
\rlap{$\scriptstyle{#1}$}} \nc{\uap}[1]{\uparrow
\rlap{$\scriptstyle{#1}$}} \nc{\defeq}{\stackrel{\rm def}{=}}
\nc{\dis}[1]{\displaystyle{#1}} \nc{\dotcup}{\,
\displaystyle{\bigcup^\bullet}\ } \nc{\sdotcup}{\tiny{
\displaystyle{\bigcup^\bullet}\ }} \nc{\hcm}{\ \hat{,}\ }
\nc{\hcirc}{\hat{\circ}} \nc{\hts}{\hat{\shpr}}
\nc{\lts}{\stackrel{\leftarrow}{\shpr}}
\nc{\rts}{\stackrel{\rightarrow}{\shpr}} \nc{\lleft}{[}
\nc{\lright}{]} \nc{\uni}[1]{\widetilde{#1}} \nc{\wor}[1]{\check{#1}}
\nc{\free}[1]{\bar{#1}} \nc{\den}[1]{\check{#1}} \nc{\lrpa}{\wr}
\nc{\curlyl}{\left \{ \begin{array}{c} {} \\ {} \end{array}
    \right .  \!\!\!\!\!\!\!}
\nc{\curlyr}{ \!\!\!\!\!\!\!
    \left . \begin{array}{c} {} \\ {} \end{array}
    \right \} }
\nc{\leaf}{\ell}       % number of leafs
\nc{\longmid}{\left | \begin{array}{c} {} \\ {} \end{array}
    \right . \!\!\!\!\!\!\!}
\nc{\ot}{\otimes} \nc{\sot}{{\scriptstyle{\ot}}}
\nc{\otm}{\overline{\ot}}
\nc{\ora}[1]{\stackrel{#1}{\rar}}
\nc{\ola}[1]{\stackrel{#1}{\la}}%${\Bbb Z}$
\nc{\pltree}{\calt^\pl}
\nc{\epltree}{\calt^{\pl,\NC}}
\nc{\rbpltree}{\calt^r}
\nc{\scs}[1]{\scriptstyle{#1}} \nc{\mrm}[1]{{\rm #1}}
\nc{\dirlim}{\displaystyle{\lim_{\longrightarrow}}\,}
\nc{\invlim}{\displaystyle{\lim_{\longleftarrow}}\,}
\nc{\mvp}{\vspace{0.5cm}} \nc{\svp}{\vspace{2cm}}
\nc{\vp}{\vspace{8cm}} \nc{\proofbegin}{\noindent{\bf Proof: }}
%\nc{\proofbegin}{\begin{proof}} % AMS command
\nc{\proofend}{$\blacksquare$ \vspace{0.5cm}}
%\nc{\proofend}{\end{proof}} %AMS command
\nc{\freerbpl}{{F^{\mathrm RBPL}}}
\nc{\sha}{{\mbox{\cyr X}}}  %used to be \cyr
\nc{\ncsha}{{\mbox{\cyr X}^{\mathrm NC}}} \nc{\ncshao}{{\mbox{\cyr
X}^{\mathrm NC,\,0}}}
\nc{\shpr}{\diamond}    %Shuffle product
\nc{\shprm}{\overline{\diamond}}    %Shuffle product
\nc{\shpro}{\diamond^0}    %Shuffle product
\nc{\shprr}{\diamond^r}     %product on controlled trees
\nc{\shpra}{\overline{\diamond}^r}
\nc{\shpru}{\check{\diamond}} \nc{\catpr}{\diamond_l}
\nc{\rcatpr}{\diamond_r} \nc{\lapr}{\diamond_a}
\nc{\sqcupm}{\ot}
\nc{\lepr}{\diamond_e} \nc{\vep}{\varepsilon} \nc{\labs}{\mid\!}
\nc{\rabs}{\!\mid} \nc{\hsha}{\widehat{\sha}}
\nc{\lsha}{\stackrel{\leftarrow}{\sha}}
\nc{\rsha}{\stackrel{\rightarrow}{\sha}} \nc{\lc}{\lfloor}
\nc{\rc}{\rfloor}
\nc{\tpr}{\sqcup}
\nc{\nctpr}{\vee}
\nc{\plpr}{\star}
\nc{\rbplpr}{\bar{\plpr}}
\nc{\sqmon}[1]{\langle #1\rangle}
\nc{\forest}{\calf}
\nc{\altx}{\Lambda_X} \nc{\vecT}{\vec{T}} \nc{\onetree}{\bullet}
\nc{\Ao}{\check{A}}
\nc{\seta}{\underline{\Ao}}
\nc{\deltaa}{\overline{\delta}}
\nc{\trho}{\widetilde{\rho}}

\nc{\rpr}{\circ}
%\nc{\apr}{\cdot}
\nc{\dpr}{{\tiny\diamond}}
\nc{\rprpm}{{\rpr}}

%%%%%%%%%%%%%%%%%%%%% roman fonts, in alphabetic order
\nc{\mmbox}[1]{\mbox{\ #1\ }} \nc{\ann}{\mrm{ann}}
\nc{\Aut}{\mrm{Aut}} \nc{\can}{\mrm{can}}
\nc{\twoalg}{{two-sided algebra}\xspace}
\nc{\colim}{\mrm{colim}}
\nc{\Cont}{\mrm{Cont}} \nc{\rchar}{\mrm{char}}
\nc{\cok}{\mrm{coker}} \nc{\dtf}{{R-{\rm tf}}} \nc{\dtor}{{R-{\rm
tor}}}
\renewcommand{\det}{\mrm{det}}
\nc{\depth}{{\mrm d}}
\nc{\Div}{{\mrm Div}} \nc{\End}{\mrm{End}} \nc{\Ext}{\mrm{Ext}}
\nc{\Fil}{\mrm{Fil}} \nc{\Frob}{\mrm{Frob}} \nc{\Gal}{\mrm{Gal}}
\nc{\GL}{\mrm{GL}} \nc{\Hom}{\mrm{Hom}} \nc{\hsr}{\mrm{H}}
\nc{\hpol}{\mrm{HP}} \nc{\id}{\mrm{id}} \nc{\im}{\mrm{im}}
\nc{\incl}{\mrm{incl}} \nc{\length}{\mrm{length}}
\nc{\LR}{\mrm{LR}} \nc{\mchar}{\rm char} \nc{\NC}{\mrm{NC}}
\nc{\mpart}{\mrm{part}} \nc{\pl}{\mrm{PL}}
\nc{\ql}{{\QQ_\ell}} \nc{\qp}{{\QQ_p}}
\nc{\rank}{\mrm{rank}} \nc{\rba}{\rm{RBA }} \nc{\rbas}{\rm{RBAs }}
\nc{\rbpl}{\mrm{RBPL}}
\nc{\rbw}{\rm{RBW }} \nc{\rbws}{\rm{RBWs }} \nc{\rcot}{\mrm{cot}}
\nc{\rest}{\rm{controlled}\xspace}
\nc{\rdef}{\mrm{def}} \nc{\rdiv}{{\rm div}} \nc{\rtf}{{\rm tf}}
\nc{\rtor}{{\rm tor}} \nc{\res}{\mrm{res}} \nc{\SL}{\mrm{SL}}
\nc{\Spec}{\mrm{Spec}} \nc{\tor}{\mrm{tor}} \nc{\Tr}{\mrm{Tr}}
\nc{\mtr}{\mrm{sk}}

%%%%%%%%%%%%%%%%%% bold face
\nc{\ab}{\mathbf{Ab}} \nc{\Alg}{\mathbf{Alg}}
\nc{\Algo}{\mathbf{Alg}^0} \nc{\Bax}{\mathbf{Bax}}
\nc{\Baxo}{\mathbf{Bax}^0} \nc{\RB}{\mathbf{RB}}
\nc{\DA}{\mathbf{DA}}
\nc{\RBo}{\mathbf{RB}^0} \nc{\BRB}{\mathbf{RB}}
\nc{\Dend}{\mathbf{DD}} \nc{\bfk}{{\bf k}} \nc{\bfone}{{\bf 1}}
\nc{\base}[1]{{a_{#1}}} \nc{\detail}{\marginpar{\bf More detail}
    \noindent{\bf Need more detail!}
    \svp}
\nc{\Diff}{\mathbf{Diff}} \nc{\gap}{\marginpar{\bf
Incomplete}\noindent{\bf Incomplete!!}
    \svp}
\nc{\FMod}{\mathbf{FMod}} \nc{\mset}{\mathbf{MSet}}
\nc{\rb}{\mathrm{RB}} \nc{\Int}{\mathbf{Int}}
\nc{\da}{\mathrm{DA}}
\nc{\Mon}{\mathbf{Mon}}
%\nc{\remark}{\noindent{\bf Remark: }}
\nc{\remarks}{\noindent{\bf Remarks: }}
\nc{\OS}{\mathbf{OS}} %free operated semigroup
\nc{\Rep}{\mathbf{Rep}}
\nc{\Rings}{\mathbf{Rings}} \nc{\Sets}{\mathbf{Sets}}
\nc{\DT}{\mathbf{DT}}

%%%%%%%%%%%%%%%%%%%Bbb fonts
\nc{\BA}{{\mathbb A}} \nc{\CC}{{\mathbb C}} \nc{\DD}{{\mathbb D}}
\nc{\EE}{{\mathbb E}} \nc{\FF}{{\mathbb F}} \nc{\GG}{{\mathbb G}}
\nc{\HH}{{\mathbb H}} \nc{\LL}{{\mathbb L}} \nc{\NN}{{\mathbb N}}
\nc{\QQ}{{\mathbb Q}} \nc{\RR}{{\mathbb R}} \nc{\BS}{{\mathbb{S}}} \nc{\TT}{{\mathbb T}}
\nc{\VV}{{\mathbb V}} \nc{\ZZ}{{\mathbb Z}}

%%%%%%%%%%%%%%%%%%% cal fonts

\nc{\calao}{{\mathcal A}} \nc{\cala}{{\mathcal A}}
\nc{\calb}{\mathcal B}
\nc{\calc}{{\mathcal C}} \nc{\cald}{{\mathcal D}}
\nc{\cale}{{\mathcal E}} \nc{\calf}{{\mathcal F}}
\nc{\calfr}{{{\mathcal F}^{\,r}}} \nc{\calfo}{{\mathcal F}^0}
\nc{\calfro}{{\mathcal F}^{\,r,0}} \nc{\oF}{\overline{F}}
\nc{\calg}{{\mathcal G}} \nc{\calh}{{\mathcal H}}
\nc{\cali}{{\mathcal I}} \nc{\calj}{{\mathcal J}}
\nc{\call}{{\mathcal L}} \nc{\calm}{{\mathcal M}}
\nc{\caln}{{\mathcal N}} \nc{\calo}{{\mathcal O}}
\nc{\calp}{{\mathcal P}} \nc{\calq}{{\mathcal Q}} \nc{\calr}{{\mathcal R}}
\nc{\calt}{{\mathscr T}} \nc{\caltr}{{\mathcal T}^{\,r}}
\nc{\calu}{{\mathcal U}} \nc{\calv}{{\mathcal V}}
\nc{\calw}{{\mathcal W}} \nc{\calx}{{\mathcal X}}
\nc{\CA}{\mathcal{A}}
\nc{\cals}{{\mathcal S}}

%%%%%%%%%%%%%%%%%%  frak fonts
\nc{\fraka}{{\mathfrak a}} \nc{\frakB}{{\mathfrak B}}
\nc{\frakb}{{\mathfrak b}} \nc{\frakd}{{\mathfrak d}}
\nc{\oD}{\overline{D}}
\nc{\frakF}{{\mathfrak F}} \nc{\frakg}{{\mathfrak g}}
\nc{\frakm}{{\mathfrak m}} \nc{\frakM}{{\mathfrak M}}
\nc{\frakMo}{{\mathfrak M}^0} \nc{\frakp}{{\mathfrak p}}
\nc{\frakS}{{\mathfrak S}} \nc{\frakSo}{{\mathfrak S}^0}
\nc{\fraks}{{\mathfrak s}} \nc{\os}{\overline{\fraks}}
\nc{\frakT}{{\mathfrak T}}
\nc{\oT}{\overline{T}}
%\nc{\frakx}{{\mathfrak x}}
\nc{\frakX}{{\mathfrak X}} \nc{\frakXo}{{\mathfrak X}^0}
\nc{\frakx}{{\mathbf x}}
%\nc{\frakTxo}{{\frakTx}^0}
\nc{\frakTx}{\frakT}      %All rooted trees, correspond to \ncsha(X)
\nc{\frakTa}{\frakT^a}        % rooted trees for \ncsha(A)
\nc{\frakTxo}{\frakTx^0}   % rooted trees for \ncshao(X)
\nc{\caltao}{\calt^{a,0}}   % rooted trees for \ncshao(A)
\nc{\ox}{\overline{\frakx}} \nc{\fraky}{{\mathfrak y}}
\nc{\frakz}{{\mathfrak z}} \nc{\oX}{\overline{X}}
\nc{\frakP}{{\mathfrak P}}

\font\cyr=wncyr10

\nc{\redtext}[1]{\textcolor{red}{#1}}
\nc{\cm}[1]{\textcolor{purple}{Chengming: #1}}
\nc{\li}[1]{\textcolor{red}{Li: #1}}
\nc{\jun}[1]{\textcolor{blue}{Jun: #1}}

%%%%%%%%%%%%%%%%%%%%%%%%%%%%%%%%%%%%%%%%%%%%%%%%%%%%%%%%%%%%%%%%%%

\title{Splitting of operads and Rota-Baxter operators on operads}
\author{Jun Pei}
\address{Department of Mathematics, Lanzhou University, Lanzhou, Gansu 730000, China}
         \email{peitsun@163.com}

\author{Chengming Bai}
\address{Chern Institute of Mathematics \& LPMC, Nankai University, Tianjin 300071, China}
         \email{baicm@nankai.edu.cn}

\author{Li Guo}
\address{Department of Mathematics and Computer Science,
         Rutgers University,
         Newark, NJ 07102}
\email{liguo@rutgers.edu}

\date{\today}

%\begin{document}

\begin{abstract}
This paper establishes a uniform procedure to split the operations in any algebraic operad, generalizing previous known notions of splitting algebraic structures from the dendriform algebra of Loday that splits the associative operation to the successors that split any binary operad. Examples are provided for various $n$-associative algebras, $n$-Lie algebras, $A_\infty$ algebras and $L_\infty$ algebras. Further, the concept of a Rota-Baxter operator, first showing its importance in the associative and Lie algebra context and then generalized to any binary operads, is generalized to arbitrary operads. The classical links from the Rota-Baxter associative algebra to the dendriform algebra and its numerous generalizations are further generalized and unified as the link from the Rota-Baxter operator on an operad to the splitting of the operad. Finally, the remarkable fact that any dendriform algebra can be recovered from a relative Rota-Baxter operator is generalized to the context of operads with the generalized notion of a relative Rota-Baxter operator for any operad.
\end{abstract}

\subjclass[2010]{18D50, 17A30, 17A36, 17B99, 17C99}

\keywords{Operads, $n$-algebra, Rota-Baxter operator, dendriform algebra, pre-Lie algebra, $A_\infty$ algebra, $L_\infty$ algebra, splitting, successor}

\maketitle

\tableofcontents

\setcounter{section}{0}

\section{Introduction}

%\subsection{Motivation and background}

Dendriform (di)algebra~\mcite{Lo2} is a module with two binary operations whose sum is associative, thus giving a two-part splitting of the associativity. This concept was introduced by Loday in the late 1990s with motivation from periodicity in algebraic $K$-theory. Several years later, Loday and Ronco~\mcite{LR} introduced the concept of a tridendriform algebra (previous called dendriform trialgebra) from their study of algebraic topology. It is a module with three binary operations whose sum is associative, thus giving a three-part splitting of the associativity. Subsequently, quite a few similar algebraic structures were introduced, such as the quadi-algebra~\mcite{AL} and ennea algebra~\mcite{Le3}. The notion of splitting of associativity was introduced by Loday~\mcite{Lo5} to describe this phenomena in general for the associative operation (see also~\mcite{EG2,Hol1}).

The splitting of the associativity turns out be important both in theory and application. The free objects for dendriform and tridendriform algebras equipped the planar binary trees and planar trees with a natural Hopf algebraic structure which is closely related to the Connes-Kreimer Hopf algebra of rooted trees from their study of quantum field theory.  A similar two-part and three-part splittings of the Lie operation are found to be respectively the pre-Lie algebra predating dendriform algebra with broad connections~\mcite{Kos,Vi,Ger,Bur}
and the PostLie algebra from operadic study~\mcite{Va} with applications to integrable systems~\mcite{BGN1}. Further, a two-part and three-part splittings of the associative commutative operation give the Zinbiel algebra~\mcite{Lo2} and commutative tridendriform algebra~\mcite{Zi} respectively. The free objects in the categories of these two algebras are respectively the shuffle algebras and quasi-shuffle algebras, thus providing algebraic characterization of these two important algebras say in the study of multiple zeta values~\mcite{Gub,Ho1,Ho2,Lo4}.

Analogues of the dendriform algebra and tridendriform algebra for the Jordan algebra, alternative algebra and Poisson algebra have also been obtained~\mcite{Ag2,BLN,HNB,LNB,NB}.
To put all these constructions in one framework, the concepts of a disuccesor and a trisuccessor were introduced in~\mcite{BBGN}, giving two-part and three-part splittings of any binary operad, relating them to the Manin black product~\mcite{Va} and the Rota-Baxter operator on a binary operad~\mcite{Ba,Gub,R1}.

\smallskip

%\subsection{Significant}
There are important algebraic structures (operads) beyond the binary ones, such as the various $n$-associative and Lie algebras, the $A_\infty$ algebra~\mcite{St} and $L_\infty$ algebra~\mcite{BM,BM1,GGR,LV,MSS}. It can be expected that splittings of these operads will also show their importance as in the binary case. In fact, the $Dend_\infty$ algebra~\mcite{LV,Zi} and the $PL_\infty$ algebra~\mcite{Ch} have been defined that should be suitable splittings of the $A_\infty$ algebra and the $L_\infty$ algebra. Structures have also arisen recently that resemble a two-part splitting for the 3-Lie algebra~\mcite{BGS}. Instead of discovering such structures one at a time in an ad hoc manner and with elaborating experiments, it is desirable from the theoretical and application perspective to establish a general framework for the splittings of all operads, generalizing the approach for binary operads in~\mcite{BBGN}. This is the purpose of this paper.
In fact, we generalize~\mcite{BBGN} in two directions. In one direction, we generalize the arity of the operads under consideration from binary to any (uniform or mixed arities). In the other direction, for a given arity or arity combination, we introduce the concept of a \conf to give a uniform treatment of different splitting patterns that include the bisuccessor and trisuccessor in~\mcite{BBGN} as two special cases. This avoids repetitive arguments and paves the way for understanding the other splittings of the associativity beyond the dendriform and tridendriform algebras.

%Further, given the important role played by the Rota-Baxter operator on the splitting of binary operads, it is also desirable to give a definition of the Rota-Baxter operator on general operads and study its relationship with the splitting of the operads. This is the purpose of this paper.

The Rota-Baxter operator which has played important role in broad areas in mathematics and physics~\mcite{Ba,Ca,CK,Gub,R1} naturally gives rise to splittings of various algebraic structures through its action on these structures, providing interesting examples and motivation for the splittings. This is the case for associative algebras, giving the dendriform and tridendriform algebras, for Lie algebras, giving the pre-Lie and PostLie algebras, and more generally for binary operads, giving bisuccessors and trisuccessors~\mcite{BBGN}. Going in the opposite direction, any dendriform and tridendriform algebras can be recovered in this fashion by a generalization of Rota-Baxter operators, called relative Rota-Baxter operators~\mcite{BGN,Uc}. We generalize these results to algebras of any operads. In order to do this, we generalize the concepts of a Rota-Baxter operator and a relative Rota-Baxter operator to the context of an operad.
\smallskip

%\subsection{Outline of the paper}

The following is an outline of the paper. In Section~\mref{sec:conc}, we define the splitting of labeled trees from a given splitting pattern called a \conf. Through the tree description of operads, splittings of operads are defined. In Section~\mref{sec:exam}, examples of splittings of operads are provided for various associative $n$-algebras and $n$-Lie algebras, the $A_\infty$ algebra and the $L_\infty$ algebra. In Section~\mref{sec:spl}, we show that a splitting of an operad indeed satisfies the same splitting property for the operations of the given operad as in the previous known cases of splittings (successors)~\mcite{BBGN}, thus justifying the name of the concept of splitting. Functorial properties of the splitting process are also studied. In Section~\mref{sec:rb}, the concept of a Rota-Baxter operator on an operad is defined with respect to a \conf. It is shown that a Rota-Baxter operator action on an operad induces a splitting of the operad. To address the question of whether any splitting of an operad can be derived from some action of Rota-Baxter type, the concept of a relative Rota-Baxter operator (also called an $\calo$-operator or a generalized Rota-Baxter operator in special cases considered before) was introduced. It is shown that, as in the case of associative algebras, any algebra of a splitting of an operad comes from the action of a relative Rota-Baxter operator on an algebra of the original operad.

\section{Splittings of operads}
\mlabel{sec:conc}
In this section, we extend splittings of binary operads introduced in \cite{BBGN} to arbitrary operads. To handle the different splitting patterns of a given operad, we define in Section~\mref{ss:tree} the notion of a \conf for a splitting. Since operads can be represented by trees, for each \conf, we define a splitting of labeled trees which is then applied to define similar splitting for nonsymmetric operads and (symmetric) operads.

\subsection{Splittings of planar trees}
\mlabel{ss:tree}
We recall some basic notions on trees and operads. For more details see~\mcite{BBGN,LV}.
\subsubsection{Labeled trees}
\begin{defn}
{\rm
\begin{enumerate}
\item
Let $\calt$ denote the set of planar reduced rooted trees together with the trivial tree $\vcenter{\xymatrix@M=4pt@R=8pt@C=4pt{\ar@{-}[d]\\ \\}}$. If $t\in \calt$ has $n$ leaves, we call $t$ an {\bf $n$-tree}. The trivial tree $\vcenter{\xymatrix@M=4pt@R=8pt@C=4pt{\ar@{-}[d]\\ \\}}$ has one leaf.
\item
 Let $\vertset$ be a set. By a {\bf decorated tree} we mean a tree $t$ of $\calt$ together with a decoration on the vertices of $t$ by elements of $\vertset$ and a decoration on the leaves of $t$ by distinct positive integers. Let $t(\vertset)$ denote the set of decorated trees of $t$ and denote
\begin{equation*}
\calt(\vertset):=\coprod_{ t \in \calt} t(\vertset).
\end{equation*}
If $\tau\in t(\vertset)$ for an $n$-tree $t$, we call $\tau$ a {\bf labeled $n$-tree}.
\item
For $\tau\in \calt(\vertset)$, we let $\vin(\tau)$ (resp. $\lin(\tau)$) denote the set (resp. ordered set) of labels of the vertices (resp. leaves) of $\tau$.
\item
Let $\tau\in \calt(\vertset)$ with $|\lin(\tau)|>1$ be a labeled tree from $t\in \calt$. Then there exists an integer $m$ such that $t$ can be written uniquely as the grafting $t_{1} \vee t_{2} \vee \cdots \vee t_{m}$ of trees $t_{1},t_{2}, \cdots t_{m}$. Correspondingly, let $\tau= \omega (\tau_{1} \vee \tau_{2} \vee \cdots \vee \tau_{m})$ denote the unique decomposition of $\tau$ as a grafting of $\tau_{1},\cdots,\tau_{m}$ in $\calt(\vertset)$ along $\gop\in \vertset$.
\end{enumerate}}
\end{defn}

For a tree $t$ and an arity graded vector space $V=\bigoplus\limits_{n\geq 1}V_n$, we define
$$
t[\gensp] := \displaystyle{\bigotimes_{v \in \vin(t)}} \gensp_{|\inv(v)|},
$$
where $\inv(v)$ denotes the incoming edges of $v$, labeled by $1,\cdots,|\inv(v)|$ from the left to the right. Then the free nonsymmetric operad $\mathcal{T}_{\hspace*{-0.1cm}ns}(\gensp)$ on $V$ is given by the vector space
$$
\mathcal{T}_{\hspace*{-0.1cm}ns}(\gensp) := \displaystyle{ \bigoplus_{t \in \calt } } \ t[\gensp].
$$
A basis $\genbas$ of $\gensp$ induces a basis $t(\genbas)$ of $t[\gensp]$ and a basis $\calt(\genbas)$ of $\mathcal{T}_{\hspace*{-0.1cm}ns}(\gensp)$. Consequently any element of $t[\gensp]$ can be represented as a linear combination of elements in $t(\genbas)$.

\subsubsection{Configurations and splittings of labeled trees}
For each integer $n\geq 1$, denote $[n]= \{1,2,\cdots,n\}$.
\begin{defn}
{\rm
\begin{enumerate}
\item
For any $1\leq m\leq n$, let $N_{(n,m)}$ denote the set of all nonempty subsets of $[n]$ with at most $m$ elements. In particular,
$$N_{(n,1)}:=A_{n}: = \{\{1\},\{2\},\cdots, \{n\}\}, \quad
N_{(n,n)}:=B_n:=\{J\subseteq [n]\,|\,J\neq \emptyset\}.$$
\item Let $\tau\in \calt(\vertset)$ and $\emptyset \neq J\subseteq \lin(\tau)$. For $\gop\in \vin(\tau)$, let $\tau_\gop$ denote the subtree of $\tau$ with root $\gop$ and let $\tau_\gop=\gop(\sigma_1\vee \cdots \vee \sigma_\ell)$ be the decomposition of $\tau_\gop$ as the grafting of decorated branches of $\tau_\gop$. Denote
\begin{equation}
J\sqcap \gop: = J\sqcap (\gop;\tau):= \{ i\in \inv(\gop)\,|\,J\cap \lin(\tau_i)\neq \emptyset\} \subseteq [\ell].
\mlabel{eq:closed}
\end{equation}
\end{enumerate}
}
\mlabel{de:conf}
\end{defn}

To get use the notation $J\sqcap \gop$, consider the case when $\lin(\tau)=[3]$. There are three such $\tau$'s. Then $\omega\in \vin(\tau)$ can have arity 2 or 3. If $|\vin(\omega)|=3$, then $\tau_\gop=\tau$ and $J\sqcap \gop=J$ for each $\emptyset\neq J\subseteq [3]$. If $|\vin(\omega)|=2$, then $\omega$ can appear in $\tau$ in four locations denoted $\omega_i, 1\leq i\leq 4$: $$\tau=\omega_1(1\vee \omega_2(2\vee 3)), \quad \tau=\omega_3(\omega_4(1\vee 2)\vee 3).$$
For $J={2}$, we have
$$ J\sqcap \omega_1 = \{2\}, J\sqcap \omega_2=\{1\}, J\sqcap \omega_3=\{1\}, J\sqcap \omega_4 =\{2\}.$$
For $J=\{1,3\}$, we have
$$ J\sqcap \omega_1 = \{1,2\}, J\sqcap \omega_2=\{2\}, J\sqcap \omega_3=\{1,2\}, J\sqcap \omega_4 =\{1\}.$$

\begin{defn}
{\rm
\begin{enumerate}
\item
A {\bf \conf} is a sequence $\confm=(\confn_n)_{n\geq 1}$ with $\confn_n \subseteq B_{n}$  such that for any $J \in \confn_n$, decorated $n$-tree $\tau$ and $\gop\in \vin(\tau)$, we have $J\sqcap \gop\in C_{|\inv(\gop)|}$ whenever $J\sqcap\gop\neq \emptyset$.
\item
For a \conf $\confm=(\confn_n)$, define $1\leq p(\confm)\leq \infty$ by
\begin{equation}
    p(\confm):=\sup \{n\,|\, \confn_n=B_n\},
\mlabel{eq:ind}
\end{equation}
if it exists, called the {\bf index} of $\confm$.
\item
A \conf $\confm=(\confn_n)$ is called {\bf $\BS$-invariant} if $\confn_n^{\BS_n}\subseteq \confn_n, n\geq 1$.
\end{enumerate}
}
\mlabel{de:cconf}
\end{defn}

From the computations before Definition~\mref{de:conf}, we find that $\{2\}\in \confn_3$ implies $\{1\}, \{2\}\in \confn_2$ and $\{1,3\}\in \confn_3$ implies $\{1\},\{2\},\{1,2\}\in \confn_2$.

It is easy to see that if the $\BS$-invariant $\confn_{n}$ contains a subset of $p$ elements, then $\confn_{n}$ contains all the subsets of $[n]$ with $p$ elements.

\begin{exam}
{\rm
\begin{enumerate}
\item Any \conf has index $n\geq 1$. An important example of \conf with index 1 is $\cala:=(A_n)$.
\item The sequence $\confm=(\confn_n)$ with $$
\confn_{n} = \left\{\begin{array}{ll} B_{n},  & 1 \leq n \leq m ,   \\
N_{(n,m)}, & n > m .\end{array}\right.
$$
is an $\BS$-invariant \conf with index $m$. The \conf $\calb:=(B_n)$ has index $\infty$.
\item The sequence with $\confn_{n} = \{[n]\}$ is an $\BS$-invariant \conf with no index. It is called the {\bf trivial ($\BS$-invariant) \conf}.
\item For any fixed $m\geq 1$, the sequence with $\confn_{n} = \{[n]\}$, $n \geq m$, $\confn_{n} =\{[n],\{1\},\{2\}, \cdots, \{n\}\}$, $n <m$, is an $\BS$-invariant \conf.
\end{enumerate}
}
\mlabel{ex:conf}
\end{exam}

\begin{defn} \mlabel{defn:vector}
{\rm
Let $\gensp=\bigoplus\limits_{n\geq 1}V_n$ be an arity graded vector space with basis $\genbas=\coprod_{n\geq 1}\genbas_{n}$ and $\confm$ be a \conf.
\begin{enumerate}
\item
Define an arity graded vector space $\ssp(V)$ by
\begin{equation}
\ssp(V)_{n}=V_{n} \ot \left(\bigoplus_{I \in \confn_{n}}   \bfk e_{I}\right),
\mlabel{eq:dsp}
\end{equation}
where we denote $(\gop \otimes e_{I})$ by $\svec{\gop}{e_{I}}$ for $\gop\in \genbas_{n}$. Then $\left\{\svec{\gop}{e_{I}}\,\Big|\, \gop \in \genbas_{n}, n\geq 1, I \in \confn_{n} \right\}$ is a basis of $\ssp(V)_{n}$.
\item
For a labeled $n$-tree $\tau$ in $\calt(\genbas)$, define $\ssp(\tau)$, also denoted by $\ssp_\emptyset(\tau)$, in $\mathcal{T}_{\hspace*{-0.1cm}ns}(\ssp(V))$ by
\begin{itemize}
 \item[$\bullet$] $\ssp(\,\vcenter{\xymatrix@M=0pt@R=8pt@C=4pt{\ar@{-}[d]\\ \\}}\,)= \vcenter{\xymatrix@M=4pt@R=8pt@C=4pt{\ar@{-}[d]\\ \\}}$,
\item[$\bullet$] when $n\geq 2$, $\ssp(\tau)$ is obtained by replacing each decoration $\gop \in \vin(\tau) \cap \genbas_{\ell}$ by $\svec{\gop}{\ast}:=\svec{\gop}{\ast_\confm}= \sum_{I \in \confn_{\ell}} \svec{\gop}{e_{I}}.$
\end{itemize}
We extend this definition to $\mathcal{T}_{\hspace*{-0.1cm}ns}(\gensp)$ by linearity.
\end{enumerate}
}
\end{defn}

\begin{defn}
{\rm
Let $\gensp=\bigoplus\limits_{n\geq 1}V_n$ be an arity graded vector space with basis $\genbas=\coprod_{n\geq 1}\genbas_{n}$ and let $\confm$ be a \conf. Let $\tau\in \calt(\genbas)$ be a labeled $n$-tree and let $J \in \confn_{|\lin(\tau)|}$. The {\bf splitting with \conf $\confm$} (or {\bf \spl-splitting} in short) $\ssp_{J}(\tau) $ of $\tau$ with respect to $J$ is an element of $ \mathcal{T}_{\hspace*{-0.1cm}ns}(\ssp(\gensp))$ defined by induction on $n:=|\lin(\tau)|$ as follows:
\begin{enumerate}
\item[$\bullet$] $\ssp_J(\vcenter{\xymatrix@M=4pt@R=8pt@C=4pt{\ar@{-}[d]\\ \\}})=\vcenter{\xymatrix@M=0pt@R=8pt@C=4pt{\ar@{-}[d]\\ \\}}$ \ ;
\item[$\bullet$] assume that $\ssp_J(\tau)$ have been defined for $\tau$ with $|\lin(\tau)|\leq k$ for a $k\geq 1$. Then, for a labeled $(k+1)$-tree $\tau\in \calt(\genbas)$ with its decomposition $\tau=\graft$ and $\omega \in \genbas_{\ell}$, denote $I: =J\sqcap \tau \in \confn_{\ell} $ defined in Eq.~(\mref{eq:closed}) and define
$$\ssp_{J}(\tau):=\ssvec{\gop}{e_{I}}(\vee_{i=1}^\ell ~\ssp_{J\cap\lin(\tau_i)}(\tau_{i}) )=\ssvec{\gop}{e_{I}} (~\ssp_{J\cap\lin(\tau_1)}(\tau_{1}) \vee \cdots  \vee \ssp_{J\cap\lin(\tau_\ell)}(\tau_{\ell})~),
$$
using the notation $\ssp_{\emptyset}(\tau)=\ssp(\tau)$ from the previous definition.
\end{enumerate}
}\mlabel{de:vtilo}
\end{defn}

We single out two important splittings that specialize to the two classical examples of splittings, namely the bisuccessor and the trisuccessor~\mcite{BBGN} in the case of binary operads.

\begin{defn}
{\rm
With the notation in Definition~\mref{de:conf}, the $\cala$-splitting (resp. $\calb$-splitting) is called the {\bf arity-splitting} (resp. {\bf power-splitting}).
}
\end{defn}

\begin{remark}
{\rm
For a binary tree $\tau$, the arity-splitting $\dsu_J(\tau)$ and power-splitting $\tsu_J(\tau)$ are the bisuccessor and trisuccessor of $\tau$ in~\cite{BBGN} respectively.
For the trivial \conf $\confm=([n])$, we have $\ssp_J(\tau)=\ssvec{\tau}{e_{[|\lin(\tau)|]}}$ resulting in no splittings, justifying the term ``trivial".
}
\mlabel{rm:bin}
\end{remark}

We next give an explicit description of the \spl-splitting.  \begin{prop}\label{pp:psupath}
Let $\gensp=\bigoplus\limits_{n\geq 1}V_n$ be an arity graded vector space with basis $\genbas=\coprod_{n\geq 1}\genbas_{n}$ and $\confm$ be a \conf. Let $\tau$ be in $\calt(\genbas)$ and $J \in \confn_{|\lin(\tau)|}$.  With the notations in Definition~\mref{de:cconf}, the \spl-splitting $\ssp_{J}(\tau)$ is obtained by relabeling each vertex $\omega\in \genbas_{\ell}$ of $\tau$ by $\svec{\gop}{e_I}$ if $I:=J\sqcap\gop\neq \emptyset$ and by $\svec{\gop}{\ast_\confm}:= \displaystyle \sum_{I \in \confn_{\ell}} \svec{\gop}{e_{I}}$ if $J\sqcap\gop= \emptyset$.
\end{prop}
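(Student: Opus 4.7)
The plan is to prove the proposition by induction on $n := |\lin(\tau)|$. For the base case $n = 1$, the tree $\tau$ must be the trivial tree (since planar reduced trees have no unary internal vertices), so there are no internal vertices to relabel and $\ssp_J(\tau)$ is the trivial tree by Definition~\mref{de:vtilo}, agreeing vacuously with the proposition.

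For the inductive step I would decompose $\tau = \omega(\tau_1 \vee \cdots \vee \tau_\ell)$ with $\omega \in \genbas_\ell$, set $I := J \sqcap \omega$, and apply Definition~\mref{de:vtilo} to obtain
$$
\ssp_J(\tau) \;=\; \ssvec{\omega}{e_I}\bigl(\ssp_{J \cap \lin(\tau_1)}(\tau_1) \vee \cdots \vee \ssp_{J \cap \lin(\tau_\ell)}(\tau_\ell)\bigr).
$$
Since $J$ is nonempty and $J \subseteq \bigsqcup_i \lin(\tau_i)$, some $J \cap \lin(\tau_i)$ is nonempty, so $I \neq \emptyset$ and the root label $\ssvec{\omega}{e_I}$ already matches the first case of the proposition.

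The key observation for the subtree labels is the locality of $\sqcap$: for any internal vertex $\gop'$ of $\tau_i$, every branch above $\gop'$ is contained in $\tau_i$, hence $(J \cap \lin(\tau_i)) \sqcap \gop' = J \sqcap \gop'$. I would then split into two cases for each subtree. If $J \cap \lin(\tau_i) \neq \emptyset$, the induction hypothesis applied to $\tau_i$ labels every vertex $\gop'$ of $\tau_i$ by $\ssvec{\gop'}{e_{(J \cap \lin(\tau_i)) \sqcap \gop'}}$ or by $\ssvec{\gop'}{\ast_\confm}$ according as $(J \cap \lin(\tau_i)) \sqcap \gop'$ is nonempty or empty, and by locality this coincides with the labeling $\ssvec{\gop'}{e_{J \sqcap \gop'}}$ or $\ssvec{\gop'}{\ast_\confm}$ demanded by the proposition. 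If $J \cap \lin(\tau_i) = \emptyset$, then by convention $\ssp_\emptyset(\tau_i) = \ssp(\tau_i)$, which by Definition~\mref{defn:vector} assigns $\ssvec{\gop'}{\ast_\confm}$ to every internal $\gop'$ of $\tau_i$; since $\lin((\tau_i)_{\gop'}) \subseteq \lin(\tau_i)$, every branch above such $\gop'$ is disjoint from $J$, so $J \sqcap \gop' = \emptyset$, again matching the second case of the proposition.

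The main (mild) obstacle I anticipate is bookkeeping with leaf-label reindexing: one must identify $J \cap \lin(\tau_i)$ with a subset of $[|\lin(\tau_i)|]$ via the order-preserving bijection so that the recursive invocation $\ssp_{J \cap \lin(\tau_i)}(\tau_i)$ is well-posed as an element of $\confn_{|\lin(\tau_i)|}$; the \conf axiom of Definition~\mref{de:cconf} plus the locality identity ensures this membership at each recursive step. With that convention fixed, the remainder is a clean unraveling of Definitions~\mref{de:conf}, \mref{defn:vector}, and \mref{de:vtilo}.
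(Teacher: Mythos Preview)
Your proposal is correct and follows exactly the approach the paper takes: the paper's entire proof reads ``The proof follows from an induction on $|\lin(\tau)|$,'' and you have simply written out that induction in full, including the locality identity $(J\cap\lin(\tau_i))\sqcap\omega' = J\sqcap\omega'$ and the reindexing caveat that the paper leaves implicit.
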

\begin{proof}
The proof follows from an induction on $|\lin(\tau)|$.
\mlabel{pp:exp}
\end{proof}

\begin{exam}
{\rm
\begin{enumerate}
\item
For the \conf $\cala$, we have
$$\ssp_{\{x_{2}\}} \left( \begin{xy}
(0,8)*+{x_{1}}="v1";(10,8)*+{x_{2}}="v2";(20,8)*+{x_{3}}="v3"; (30,8)*+{x_{4}}="v4"; (38,8)*+{x_{5}}="v5"; (46,8)*+{x_{6}}="v6";
(20,0)*+{\omega_{1}}="v7";(42,0)*+{\omega_{2}}="v8"; (20,-8)*+{\omega_{3}}="v9";(20,-16)*+{}="v10";
{\ar@{-}"v10";"v9"};{\ar@{-}"v9";"v8"};{\ar@{-}"v9";"v7"};{\ar@{-}"v9";"v1"};{\ar@{-}"v9";"v8"};{\ar@{-}"v7";"v2"};{\ar@{-}"v7";"v3"};
{\ar@{-}"v7";"v4"};{\ar@{-}"v8";"v5"};{\ar@{-}"v8";"v6"}
\end{xy} \right)= \begin{xy}
(0,9)*+{x_{1}}="v1";(10,9)*+{x_{2}}="v2";(20,9)*+{x_{3}}="v3"; (30,9)*+{x_{4}}="v4"; (38,9)*+{x_{5}}="v5"; (46,9)*+{x_{6}}="v6";
(20,0)*+{\svec{\omega_{1}}{e_{1}}}="v7";(42,-2)* +{\svec{\omega_{2}}{\ast_{\!\!{}_\cala}}}="v8"; (20,-12)*+{\svec{\omega_{3}}{e_{2}}}="v9";(20,-22)*+{}="v10";
{\ar@{->}"v10";"v9"};{\ar@{-}"v9";"v8"};{\ar@{->}"v9";"v7"};{\ar@{-}"v9";"v1"};{\ar@{-}"v9";"v8"};{\ar@{->}"v7";"v2"};{\ar@{-}"v7";"v3"};
{\ar@{-}"v7";"v4"};{\ar@{-}"v8";"v5"};{\ar@{-}"v8";"v6"}
\end{xy}
$$
$$
= \begin{xy}
(0,9)*+{x_{1}}="v1";(10,9)*+{x_{2}}="v2";(20,9)*+{x_{3}}="v3"; (30,9)*+{x_{4}}="v4"; (38,9)*+{x_{5}}="v5"; (46,9)*+{x_{6}}="v6";
(20,0)*+{\svec{\omega_{1}}{e_{1}}}="v7";(42,-2)*+{\svec{\omega_{2}}{e_{1}}}="v8"; (20,-12)*+{\svec{\omega_{3}}{e_{2}}}="v9";(20,-22)*+{}="v10";
{\ar@{-}"v10";"v9"};{\ar@{-}"v9";"v8"};{\ar@{-}"v9";"v7"};{\ar@{-}"v9";"v1"};{\ar@{-}"v9";"v8"};{\ar@{-}"v7";"v2"};{\ar@{-}"v7";"v3"};
{\ar@{-}"v7";"v4"};{\ar@{-}"v8";"v5"};{\ar@{-}"v8";"v6"}
\end{xy} +\begin{xy}
(0,9)*+{x_{1}}="v1";(10,9)*+{x_{2}}="v2";(20,9)*+{x_{3}}="v3"; (30,9)*+{x_{4}}="v4"; (38,9)*+{x_{5}}="v5"; (46,9)*+{x_{6}}="v6";
(20,0)*+{\svec{\omega_{1}}{e_{1}}}="v7";(42,-2)*+{\svec{\omega_{2}}{e_{2}}}="v8"; (20,-12)*+{\svec{\omega_{3}}{e_{2}}}="v9";(20,-22)*+{}="v10";
{\ar@{-}"v10";"v9"};{\ar@{-}"v9";"v8"};{\ar@{-}"v9";"v7"};{\ar@{-}"v9";"v1"};{\ar@{-}"v9";"v8"};{\ar@{-}"v7";"v2"};{\ar@{-}"v7";"v3"};
{\ar@{-}"v7";"v4"};{\ar@{-}"v8";"v5"};{\ar@{-}"v8";"v6"}
\end{xy}
$$
\\
\item For a \conf $\confm=(\confn_n)$ with index $\geq 3$ (for example for $\confm=\calb$), we have\\
$\ssp_{\{x_{1},x_{2},x_{4}\}}  \left( \begin{xy}
(0,8)*+{x_{1}}="v1";(10,8)*+{x_{2}}="v2";(20,8)*+{x_{3}}="v3"; (30,8)*+{x_{4}}="v4"; (40,8)*+{x_{5}}="v5"; (50,8)*+{x_{6}}="v6";
(20,0)*+{\omega_{1}}="v7";(45,0)*+{\omega_{2}}="v8"; (20,-8)*+{\omega_{3}}="v9";(20,-16)*+{}="v10";
{\ar@{-}"v10";"v9"};{\ar@{-}"v9";"v8"};{\ar@{-}"v9";"v7"};{\ar@{-}"v9";"v1"};{\ar@{-}"v9";"v8"};{\ar@{-}"v7";"v2"};{\ar@{-}"v7";"v3"};
{\ar@{-}"v7";"v4"};{\ar@{-}"v8";"v5"};{\ar@{-}"v8";"v6"}
\end{xy} \right)  = \begin{xy}
(0,9)*+{x_{1}}="v1";(10,9)*+{x_{2}}="v2";(20,9)*+{x_{3}}="v3"; (30,9)*+{x_{4}}="v4"; (38,9)*+{x_{5}}="v5"; (46,9)*+{x_{6}}="v6";
(20,0)*+{\svec{\omega_{1}}{e_{\{1,3\}}}} ="v7";(42,-2)*+{\svec{\omega_{2}}{\ast_{\!{}_{\confm}}}}="v8"; (20,-12)*+{\svec{\omega_{3}}{e_{\{1,2\}}}}="v9";(20,-22)*+{}="v10";
{\ar@{->}"v10";"v9"};{\ar@{-}"v9";"v8"};{\ar@{->}"v9";"v7"};{\ar@{->}"v9";"v1"};{\ar@{-}"v9";"v8"};{\ar@{->}"v7";"v2"};{\ar@{-}"v7";"v3"};
{\ar@{->}"v7";"v4"};{\ar@{-}"v8";"v5"};{\ar@{-}"v8";"v6"}
\end{xy}$\\
where $\svec{\omega}{\ast_{\!{}_{\confm}}}=\sum\limits_{I\in \confn_2} e_I.$
\end{enumerate}}
\end{exam}

\subsection{Splittings of nonsymmetric operads and (symmetric) operads}\label{repnsopd}
We now give the splitting of an operad with a given splitting \conf, starting with the nonsymmetric case.
\subsubsection{The nonsymmetric case}
Let $\gensp=\bigoplus\limits_{n\geq 1}V_n$ be an arity graded vector space with basis $\genbas=\coprod_{n\geq 1}\genbas_{n}$.
\begin{enumerate}
\item
An element
$$r:=\sum_{i=1}^r c_{i}\tau_{i}, \quad c_{i}\in\bfk, \tau_i\in \calt(\genbas),$$
in $\mathcal{T}_{\hspace*{-0.1cm}ns}(\gensp)$ is called {\bf homogeneous} if $\lin(\tau_i)$ are the same for $1\leq i\leq r$. Then denote $\lin(r)=\lin(\tau_i)$ for any $1\leq i\leq r$.
\item
A collection of elements
$$r_s:=\sum_{i=1}^r c_{s,i}\tau_{s,i}, \quad c_{s,i}\in\bfk, \tau_{s,i}\in \calt(\genbas), 1\leq s\leq k, k\geq 1, $$
in $\mathcal{T}_{\hspace*{-0.1cm}ns}(\gensp)$ is called {\bf locally homogenous} if each element $r_s$, $1\leq s\leq k$, is homogeneous.
\end{enumerate}

\begin{defn}\mlabel{rule}
{\rm Let $\opd=\mathcal{T}_{\hspace*{-0.1cm}ns}(V)/(R)$ be a nonsymmetric operad where $V$ is an arity graded vector space with a basis $\genbas$ and $R\subseteq \mathcal{T}_{\hspace*{-0.1cm}ns}(\gensp)$ is a subset of locally homogeneous elements:
\begin{equation}
r_s=\sum_i c_{s,i}\tau_{s,i}\ \in \mathcal{T}_{\hspace*{-0.1cm}ns}(\gensp)\; , \; \ c_{s,i}\in\bfk, \ \tau_{s,i}\in \calt(\genbas), \ 1\leq s\leq k.
\end{equation}
Let $\confm=(\confn_n)$ be a \conf. The {\bf \spl-splitting} of $\opd$ is defined to be the operad
$$\ssp(\opd)=\mathcal{T}(\ssp(\gensp))/ (\ssp(\relsp))$$
where the space of relations is generated by
\begin{equation*}
\ssp(R):= \left\{\ssp_{J}(r_{s}) = \sum_{i} c_{s,i} \ssp_{J}(\tau_{s,i})~\big|~  J \in \confn_{|\lin(\tau_{s,i})|}, 1\leq s \leq k  \right\}.
\end{equation*}
}
\end{defn}

\subsubsection{The symmetric case}

Let $\gensp=\bigoplus\limits_{n\geq 1}\gensp(n)$ be an $\BS$-module with a linear basis $\genbas=\coprod_{n\geq 1}\genbas(n)$ such that $\genbas(n)$ is invariant under the action of $\BS_n$.
For any finite set $\mathcal{X}$ of cardinality $n$, define the coinvariant space
$$ \gensp(\mathcal{X}):= \left( \displaystyle{\bigoplus_{f: [n]\rightarrow \mathcal{X}}} \gensp(n) \right)_{\BS_{n}} \, ,$$
where the sum is over all the bijections from $[n]:=\{ 1,2, \ldots , n \}$ to $\mathcal{X}$ and where the symmetric group acts diagonally.

Let $\mathbb{T}$ denote the set of isomorphism classes of reduced trees~\cite[Appendix C]{LV}. For $\textsf{t}\in \mathbb{T}$, define the treewise tensor $\BS$-module associated to $\textsf{t}$, explicitly given by
$$  \textsf{t}[\gensp] := \displaystyle{\bigotimes_{v \in \vin(\textsf{t})} } \gensp(\inv(v)) \ , $$
see~\cite[Section 5.5.1]{LV}.
Then the free operad $\mathcal{T}(\gensp)$ on an $\BS$-module $\gensp$ is given by the $\BS$-module
$$ \mathcal{T}(\gensp) := \displaystyle{ \bigoplus_{\textsf{t} \in \mathbb{T} } } \ \textsf{t}[\gensp] \,. $$

Each tree $\textsf{t}$ in $\mathbb{T}$ can be represented by a planar tree $t$ in $\calt$ by choosing a total order on the set of inputs of each vertex of $\textsf{t}$. Further, $t[\gensp]\cong \mathsf{t}[\gensp]$~\cite[Section 2.8]{Ho}. Fixing such a choice $t$ for each $\textsf{t}\in \mathbb{T}$ gives a subset $\mathfrak{R}\subseteq \calt$ in bijection with $\mathbb{T}$.
Then we have
$$\mathcal{T}(\gensp)\cong \displaystyle{ \bigoplus_{t \in \mathfrak{R} } } \ t[\gensp] \ , $$
allowing us to use the notations in the nonsymmetric case.

\begin{defn}\mlabel{rule1}
{\rm Let $\opd=\mathcal{T}(\gensp)/ (\relsp)$ be an operad where $\gensp=\bigoplus\limits_{n\geq 1}\gensp(n)$ is an $\BS$-module with a linear basis $\genbas=\coprod_{n\geq 1}\genbas(n)$ that is invariant under the action of $\BS_n$ and where the space of relations $(R)$ is generated, as an $\BS$-module, by a set $R$ of locally homogeneous elements
\begin{equation}
r_s:=\sum_i c_{s,i}\tau_{s,i}, \ c_{s,i}\in\bfk, \tau_{s,i} \in \bigcup_{t\in \mathfrak{R}} t(\genbas), \ 1\leq s\leq k.
\mlabel{eq:pres}
\end{equation}
Let $\confm$ be an $\BS$-invariant \conf. The {\bf \spl-splitting} of $\opd$ is defined to be the operad
$$\ssp(\opd)=\mathcal{T}(\ssp(\gensp))/ (\ssp(\relsp))$$
where the $\BS_n$-action on $\ssp(\gensp)(n)=\gensp(n) \ot (\bigoplus_{I \in \confn_{n}}   \bfk e_{I})$  is given by
\begin{equation*}
\svec{\gop}{e_{I}}^{\sigma}:=\svec{\gop^{\sigma}}{e_{\sigma(I)}}, \quad  \gop\in \gensp(n), \quad \sigma(I) = \{\sigma(i)~|~ i \in I\}
\end{equation*}
and the space of relations $(\ssp(\relsp))$ is generated, as an $\BS$-module, by
\begin{equation*}
\ssp(R):= \left\{\ssp_{J}(r_{s}) = \sum_{i} c_{s,i} \ssp_{J}(\tau_{s,i})~\big|~ J \in \confn_{|\lin(\tau_{s,i})|}, 1\leq s \leq k  \right\}.
\end{equation*}
}
\end{defn}

By Remark~\mref{rm:bin}, we have
\begin{prop}
When $\calp$ is a (symmetric or nonsymmetric) binary operad, the arity and power splitting of $\calp$ is the disuccessor and the trisuccessor of $\calp$ respectively.
\end{prop}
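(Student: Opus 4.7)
The plan is to unfold the definitions of the arity and power splittings in the binary setting and match them, term by term, with the bisuccessor and trisuccessor constructions of~\mcite{BBGN}. The key observation is that for a binary operad, every generator in $\gensp$ has arity $2$, so all relevant \confs $\confn_n$ enter only through $\confn_2$. For the arity configuration $\cala=(A_n)$, we have $A_2=\{\{1\},\{2\}\}$, so $\ssp(\gensp)_2 = \gensp_2\otimes(\bfk e_{\{1\}}\oplus\bfk e_{\{2\}})$, which is precisely the doubling of the generating operations used to build the bisuccessor in~\mcite{BBGN}. Similarly, for the power configuration $\calb=(B_n)$, $B_2=\{\{1\},\{2\},\{1,2\}\}$, so $\ssp(\gensp)_2 = \gensp_2\otimes(\bfk e_{\{1\}}\oplus\bfk e_{\{2\}}\oplus\bfk e_{\{1,2\}})$, matching the tripling used for the trisuccessor.

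Next, I would invoke Remark~\mref{rm:bin} directly to identify the tree-level splittings: for any binary tree $\tau$ and any $J\in \confn_{|\lin(\tau)|}$, the elements $\dsu_J(\tau)$ and $\tsu_J(\tau)$ produced by Definition~\mref{de:vtilo} are exactly the bisuccessor and trisuccessor of $\tau$, respectively, as defined in~\mcite{BBGN}. This reduces the question to checking that the relations match.

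For the relations, recall that a binary operad $\calp$ is presented by $\calp=\mathcal{T}(\gensp)/(\relsp)$, with $R$ a locally homogeneous set of elements $r_s=\sum_i c_{s,i}\tau_{s,i}$ in which each $\tau_{s,i}$ is a binary tree. Applying Definition~\mref{rule} (resp.\ Definition~\mref{rule1}), the relation space $\ssp(\relsp)$ is generated by $\{\ssp_J(r_s)\mid J\in \confn_{|\lin(\tau_{s,i})|},\,1\le s\le k\}$. By the previous paragraph, each $\ssp_J(r_s)$ coincides with the bisuccessor (resp.\ trisuccessor) relation of $r_s$ in~\mcite{BBGN}. Hence the quotient $\ssp(\calp)$ is exactly the disuccessor (resp.\ trisuccessor) of $\calp$.

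I do not foresee a serious obstacle here; the content of the statement is almost bookkeeping after Remark~\mref{rm:bin}. The only point requiring a bit of care is the symmetric case, where one must check that the $\BS_n$-action on $\ssp(\gensp)(n)$ defined in Definition~\mref{rule1} via $\svec{\gop}{e_I}^\sigma = \svec{\gop^\sigma}{e_{\sigma(I)}}$ agrees with the $\BS_n$-action used in~\mcite{BBGN} when $n=2$; but for $n=2$ this is immediate since $\sigma$ either fixes or swaps $\{1\}$ and $\{2\}$ and fixes $\{1,2\}$, which is exactly the convention in loc.\ cit.
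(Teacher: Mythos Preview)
Your proposal is correct and follows exactly the paper's approach: the paper's proof is the single line ``By Remark~\mref{rm:bin}'', and your argument is simply a careful unpacking of why that remark, together with the definitions of $\ssp(\gensp)$, $\ssp(R)$, and the $\BS_n$-action, yields the identification with the bisuccessor and trisuccessor of~\mcite{BBGN}. There is nothing to add or correct.
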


\section{Examples of splittings of operads}
\mlabel{sec:exam}
We now give some examples of splittings of operads, first in the nonsymmetric case in Section~\mref{ss:egns} and then in the general case in Section~\mref{ss:egs}. We will focus on the arity- and power-splittings. But note that there are other splittings, for example from the \confs in Example~\mref{ex:conf}. See~\S\mref{sss:dend}.

\subsection{Examples of splittings of nonsymmetric operads}
\mlabel{ss:egns}
We start with the dendriform algebras which is the origin of all the splitting constructions. We then consider the $n$-ary generalizations. We finally show that the operad $\Dend_\infty$ defined in~\mcite{LV,Zi} is the arity splitting of the operad $A_\infty$~\mcite{St}.

\subsubsection{Dendriform operads revisited}
\mlabel{sss:dend}

Recall that the tridendriform algebra of  Loday and Ronco \mcite{LR} is defined by three bilinear operations $\{\prec, \succ, \cdot\}$ satisfying the following relations:
\begin{eqnarray}
&(x \prec y) \prec z = x \prec (y \ast z), \quad (x \succ y) \prec z = x \succ (y \prec z), \quad (x \ast y) \succ z = x \succ (y \succ z),&  \label{eq:d1}\\
&(x \cdot y) \prec z = x \cdot (y \prec z), \quad (x \prec y) \cdot z = x \cdot (y \succ z), \quad (x \succ y) \cdot z = x \succ (y \cdot z),& \label{eq:d2}\\
&(x \cdot y) \cdot z = x \cdot (y \cdot z).\label{eq:d3}&
\end{eqnarray}
where $\ast = \prec + \succ +\cdot$. The dendriform algebra of Loday \mcite{Lo2} is defined by two bilinear operations $\{\prec, \succ\}$ satisfying the relations in Eq. (\ref{eq:d1}), where $ \ast = \prec + \succ$. It is easy to check that the corresponding nonsymmetric operad Dend (resp. TriDend) is the arity-splitting (resp. power-splitting) of the nonsymmetric operad $As$ of associative algebras.
Let $\confm = (\confn_{n})$ be a \conf with index $2$. Then a $\ssp(\mathit{As})$-algebra, is a vector space $A$ with three bilinear operations $\{\prec,\succ,\cdot\}$ satisfying the relations in Eqs. (\ref{eq:d1}) and (\ref{eq:d2}), where $\ast = \prec + \succ + \cdot$, thus gives a splitting of $As$ between $Dend$ and $TriDend$.

\subsubsection{Dendriform $n$-operads}
There is no unique $n$-arity generalization of the associative algebra for $n\geq 3$. Recall that a {\bf partially associative $n$-algebra} \cite{GGR} is a vector space with an $n$-ary operation such that the signed sum of the ordered product of $2n-1$ elements is zero, that is,
\begin{equation}
\sum_{i=0}^{n-1} (-1)^{i (n-1)} (x_{1}, \cdots ,x_{i}, (x_{i+1},\cdots ,x_{i+n}),x_{i+n+1}, \cdots , x_{2n-1}) = 0.
\mlabel{eq:pas}
\end{equation}
When $n=2$, this reduces to the classical associativity $(x_{1}x_{2})x_{3} - x_{1}(x_{2}x_{3}) =0$.
For the case when $n=3$, the partial associativity is
$$
((x_{1},x_{2},x_{3}),x_{4},x_{5}) + (x_{1},(x_{2},x_{3},x_{4}),x_{5}) + (x_{1},x_{2},(x_{3},x_{4},x_{5}))=0.
$$

A vector space with an $n$-ary operation is called an {\bf (totally) associative $n$-algebra} \cite{BM,GGR} if the ordered product of $2n-1$ elements does not depend on the position of the parentheses, that is,
$$
(x_{1} \cdots (x_{i},\cdots ,x_{i+n-1}), \cdots , x_{2n-1}) = (x_{1},\cdots, (x_{j},\cdots, x_{j-n+1}), \cdots, x_{2n-1})
$$
whenever $1 \leq i < j \leq n$. When $n=2$, this also reduces to the classical associativity. For the case when $n=3$, we have
$$
((x_{1},x_{2},x_{3}),x_{4},x_{5}) = (x_{1},(x_{2},x_{3},x_{4}),x_{5})= (x_{1},x_{2},(x_{3},x_{4},x_{5})).
$$

\begin{prop}
Let $\mathit{PAs}_{3}$ be the nonsymmetric operad of the partially associative $3$-algebra with product $\omega = (\cdot, \cdot, \cdot)$. Then an $\dsu(\mathit{PAs}_3)$-algebra, called a {\bf partially dendriform $3$-algebra}, is a vector space $A$ with three trilinear operations $\nwarrow, \uparrow, \nearrow$ such that
\begin{eqnarray*}
&\nwarrow (\nwarrow (x_{1},x_{2},x_{3}), x_{4},x_{5}) + \nwarrow (x_{1}, \ast (x_{2},x_{3},x_{4}), x_{5}) + \nwarrow (x_{1}, x_{2}, \ast (x_{3},x_{4},x_{5})) = 0,&    \\
&\nwarrow (\uparrow (x_{1},x_{2},x_{3}), x_{4},x_{5}) + \uparrow (x_{1}, \nwarrow (x_{2},x_{3},x_{4}), x_{5}) + \uparrow (x_{1}, x_{2}, \ast(x_{3},x_{4},x_{5}))=0,& \\
&\nwarrow (\nearrow (x_{1},x_{2},x_{3}), x_{4},x_{5}) + \uparrow (x_{1}, \uparrow (x_{2},x_{3},x_{4}), x_{5}) + \nearrow (x_{1}, x_{2},\nwarrow(x_{3},x_{4}, x_{5}))=0,& \\
&\uparrow (\ast (x_{1},x_{2},x_{3}), x_{4},x_{5})  +  \uparrow (x_{1}, \nearrow (x_{2},x_{3},x_{4}), x_{5}) + \nearrow (x_{1}, x_{2},\uparrow(x_{3},x_{4}, x_{5})) =0,& \\
&\nearrow (\ast (x_{1},x_{2},x_{3}), x_{4},x_{5}) + \nearrow (x_{1}, \ast (x_{2},x_{3},x_{4}), x_{5}) + \nearrow (x_{1}, x_{2},\nearrow(x_{3},x_{4}, x_{5}))=0.&
\end{eqnarray*}
Here we have used the notation $\ast = \nwarrow + \uparrow + \nearrow$.
\end{prop}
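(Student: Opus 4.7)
The plan is to apply Definition~\ref{rule} directly to the presentation of $\mathit{PAs}_3$ and compute $\dsu_J(r)$ for the single defining relation
\[
r = ((x_1,x_2,x_3),x_4,x_5) + (x_1,(x_2,x_3,x_4),x_5) + (x_1,x_2,(x_3,x_4,x_5)),
\]
where $J$ ranges over $A_5 = \{\{1\},\{2\},\{3\},\{4\},\{5\}\}$. Since $\gensp$ has a single generator $\omega$ of arity~$3$, the arity-split generating space $\dsu(\gensp)_3$ has basis $\{\svec{\omega}{e_{\{1\}}}, \svec{\omega}{e_{\{2\}}}, \svec{\omega}{e_{\{3\}}}\}$, which we identify with the three trilinear operations $\nwarrow, \uparrow, \nearrow$ respectively, so that $\svec{\omega}{\ast_\cala} = \svec{\omega}{e_{\{1\}}} + \svec{\omega}{e_{\{2\}}} + \svec{\omega}{e_{\{3\}}}$ corresponds to $\ast = \nwarrow + \uparrow + \nearrow$.

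The main step is to use Proposition~\ref{pp:exp} to relabel, for each singleton $J = \{i\}$ and each of the three trees appearing in $r$, the root and the interior $\omega$-vertex by $\svec{\omega}{e_{J\sqcap\omega}}$ whenever $J\sqcap\omega \neq \emptyset$ and by $\svec{\omega}{\ast_\cala}$ otherwise. For each of the three trees $T_1,T_2,T_3$ in $r$ I will compute the leaf partition at the root and at the interior vertex; for instance at the root of $T_1$ the three input branches carry leaf sets $\{1,2,3\}, \{4\}, \{5\}$, so $\{i\}\sqcap\omega_{\mathrm{root}}^{(1)}$ equals $\{1\}$ for $i\in\{1,2,3\}$, $\{2\}$ for $i=4$, and $\{3\}$ for $i=5$, while at the interior vertex $J \sqcap \omega_{\mathrm{in}}^{(1)} = \{i\}$ for $i\in\{1,2,3\}$ and $\emptyset$ for $i\in\{4,5\}$. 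Repeating this bookkeeping for $T_2$ and $T_3$ and assembling the results for each $J$ yields precisely one of the five listed equations, in the order $J=\{1\},\{2\},\{3\},\{4\},\{5\}$.

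Because $\cala$ has index~$1$ and the only admissible subsets are singletons, no further relations $\dsu_J(r)$ arise, and by Definition~\ref{rule} the ideal $(\dsu(R))$ is generated exactly by these five elements. Conversely, the relations displayed in the proposition, when their left-hand sides are summed over $\nwarrow,\uparrow,\nearrow$ for each operation symbol, collapse to the partial associativity of $\omega$, confirming that the arity-splitting is the partially dendriform $3$-algebra operad and not a proper quotient or extension.

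The calculation is entirely mechanical; the only potential pitfall is miscounting which branch of each $\omega$-vertex is hit by the singleton $\{i\}$, especially at the interior vertex where the branch is re-indexed by $\inv(\omega)$ rather than by the original leaf labels. I will therefore set up, once and for all, a small table listing the leaf set of each branch at each of the six $\omega$-vertices in $r$ before reading off the five identities, after which verifying the match with the displayed equations is immediate.
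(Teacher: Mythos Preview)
Your approach is correct and essentially identical to the paper's: both invoke Proposition~\ref{pp:psupath} to compute $\dsu_{\{i\}}(r)$ for $i=1,\dots,5$, then identify $\svec{\omega}{e_{\{1\}}},\svec{\omega}{e_{\{2\}}},\svec{\omega}{e_{\{3\}}}$ with $\nwarrow,\uparrow,\nearrow$ to read off the five relations. Your final ``Conversely'' paragraph is unnecessary, since by Definition~\ref{rule} the operad $\dsu(\mathit{PAs}_3)$ is \emph{defined} as $\mathcal{T}(\dsu(V))/(\dsu(R))$, so once $\dsu(R)$ is exhibited the proposition holds by definition; the collapse-on-summing observation is the separate splitting property of Theorem~\ref{pp:suast}, not part of this proof.
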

\begin{proof}
Let $r$ denote the relation in Eq.~(\mref{eq:pas}). By Proposition~\mref{pp:exp}, we have
\begin{eqnarray*}
\dsu_{x_{1}}  (r) &=&  \left\{ \svec{\omega}{e_{1}} (\svec{\omega}{e_{1}} (x_{1},x_{2},x_{3}), x_{4},x_{5}) + \svec{\omega}{e_{1}} (x_{1},\svec{\omega}{\ast}(x_{2},x_{3},x_{4}) ,x_{5}) + \svec{\omega}{e_{1}} (x_{1},x_{2},\svec{\omega}{\ast}(x_{3},x_{4},x_{5}))  \right\};\\
\dsu_{x_{2}} (r)   &=& \left\{ \svec{\omega}{e_{1}} (\svec{\omega}{e_{2}} (x_{1},x_{2},x_{3}), x_{4},x_{5}) + \svec{\omega}{e_{2}} (x_{1},\svec{\omega}{e_{1}}(x_{2},x_{3},x_{4}) ,x_{5}) + \svec{\omega}{e_{2}} (x_{1},x_{2},\svec{\omega}{\ast}(x_{3},x_{4},x_{5}))  \right\} ;\\
\dsu_{x_{3}}(r) &= &\left\{ \svec{\omega}{e_{1}} (\svec{\omega}{e_{3}} (x_{1},x_{2},x_{3}), x_{4},x_{5}) + \svec{\omega}{e_{2}} (x_{1},\svec{\omega}{e_{2}}(x_{2},x_{3},x_{4}) ,x_{5}) + \svec{\omega}{e_{3}} (x_{1},x_{2},\svec{\omega}{e_{1}}(x_{3},x_{4},x_{5}))  \right\} ;\\
\dsu_{x_{4}}  (r) &=&  \left\{ \svec{\omega}{e_{2}} (\svec{\omega}{\ast} (x_{1},x_{2},x_{3}), x_{4},x_{5}) + \svec{\omega}{e_{2}} (x_{1},\svec{\omega}{e_{3}}(x_{2},x_{3},x_{4}) ,x_{5}) + \svec{\omega}{e_{3}} (x_{1},x_{2},\svec{\omega}{e_{2}}(x_{3},x_{4},x_{5}))  \right\};\\
\dsu_{x_{5}} (r)  & =& \left\{ \svec{\omega}{e_{3}} (\svec{\omega}{\ast} (x_{1},x_{2},x_{3}), x_{4},x_{5}) + \svec{\omega}{e_{3}} (x_{1},\svec{\omega}{\ast}(x_{2},x_{3},x_{4}) ,x_{5}) + \svec{\omega}{e_{3}} (x_{1},x_{2},\svec{\omega}{e_{3}}(x_{3},x_{4},x_{5}))  \right\}.
\end{eqnarray*}
Then abbreviating $\nwarrow = \svec{\omega}{e_{1}}, \uparrow = \svec{\omega}{e_{2}}, \nearrow = \svec{\omega}{e_{3}}$, we obtain the relations
in the proposition.
\end{proof}

Similarly, on the level of operads, we have
\begin{prop}
Let $\mathit{TAs}_{3}$ be the nonsymmetric operad of the totally associative $3$-algebra with product $\omega = (\cdot, \cdot, \cdot)$. Then the operad $\dsu(\mathit{TAs}_3)$, called the {\bf totally dendriform $3$-operad}, has its arity graded space $V$ concentrated in $V_3=\bfk \{ \nwarrow, \uparrow, \nearrow\}$ and its relation space generated by
\begin{eqnarray*}
&\nwarrow \circ (\nwarrow \ot \id \ot \id) - \nwarrow \circ (\id \ot \ast \ot \id), \quad
\nwarrow \circ (\nwarrow \ot \id \ot \id) - \nwarrow  \circ (\id \ot \id  \ot \ast),&\\
&\nwarrow \circ (\uparrow \ot \id \ot \id) - \uparrow \circ ( \id \ot \nwarrow \ot \id), \quad
\nwarrow \circ (\uparrow \ot \id \ot \id) - \uparrow \circ (\id \ot \id \ot \ast),& \\
&\nwarrow \circ (\nearrow \ot \id \ot \id) - \uparrow \circ (\id \ot \uparrow \ot \id), \quad
\nwarrow \circ (\nearrow \ot \id \ot \id) - \nearrow \circ (\id \ot \id \ot \nwarrow),& \\
&\uparrow \circ (\ast \ot \id \ot \id)    -  \uparrow  \circ (\id \ot \nearrow \ot \id), \quad
\uparrow \circ (\ast \ot \id \ot \id)    - \nearrow \circ (\id \ot \id \ot \uparrow),& \\
&\nearrow \circ (\ast \ot \id \ot \id)    - \nearrow \circ (\id \ot \ast \ot \id), \quad
\nearrow \circ (\ast \ot \id \ot \id)    - \nearrow \circ (\id \ot \id \ot \nearrow).&
\end{eqnarray*}
Here we have used the notation $\ast = \nwarrow + \uparrow + \nearrow$.
\end{prop}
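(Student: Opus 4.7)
The plan is to present $\mathit{TAs}_3$ by its single ternary generator $\omega\in V_3$ and two locally homogeneous defining relations, then push these through Proposition~\mref{pp:exp} for the configuration $\cala=(A_n)$. Since $A_5=\{\{x_1\},\dots,\{x_5\}\}$, Definition~\mref{rule} for the arity-splitting produces exactly $5\times 2=10$ new relations, which should match the ten equations listed.

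First I would fix the presentation
\[
r_1 := \omega\circ(\omega\ot\id\ot\id) - \omega\circ(\id\ot\omega\ot\id),\qquad r_2 := \omega\circ(\omega\ot\id\ot\id) - \omega\circ(\id\ot\id\ot\omega),
\]
each homogeneous with $\lin(r_s)=\{x_1,\dots,x_5\}$. Together $r_1$ and $r_2$ generate all total-associativity identities for $\omega$: the remaining difference $\omega\circ(\id\ot\omega\ot\id)-\omega\circ(\id\ot\id\ot\omega)$ equals $r_1-r_2$. Thus, by Definition~\mref{rule}, $\dsu(\mathit{TAs}_3)$ is the nonsymmetric operad on $V_3=\bfk\{\svec{\omega}{e_1},\svec{\omega}{e_2},\svec{\omega}{e_3}\}$, which I rename $\nwarrow,\uparrow,\nearrow$, modulo the space spanned by $\{\dsu_{\{x_i\}}(r_s)\mid 1\le i\le 5,\ s\in\{1,2\}\}$.

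Next I would apply Proposition~\mref{pp:exp} at every outer and inner vertex of every summand: at each $\omega$, compute $I:=J\sqcap\omega$ and relabel by $\svec{\omega}{e_j}$ if $I=\{j\}$, or by $\svec{\omega}{\ast_\cala}=\nwarrow+\uparrow+\nearrow=:\ast$ if $I=\emptyset$. For fixed $J=\{x_i\}$, the outer $\omega$ is relabeled by $\svec{\omega}{e_j}$ where $j$ records which of its three slots contains $x_i$, and the inner $\omega$ is relabeled by $\svec{\omega}{e_k}$ when $x_i$ is its $k$-th leaf, or by $\ast$ when $x_i$ is not a leaf of that inner $\omega$. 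Iterating $(i,s)$ over $\{1,\dots,5\}\times\{1,2\}$ in the natural order gives a $5\times 2$ table of outputs that reproduces, line by line, the ten equations in the statement; for instance $\dsu_{\{x_1\}}(r_1)$ yields the first equation, $\dsu_{\{x_3\}}(r_1)$ yields the fifth ($\nwarrow\circ(\nearrow\ot\id\ot\id)-\uparrow\circ(\id\ot\uparrow\ot\id)$), and $\dsu_{\{x_5\}}(r_2)$ yields the last.

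There is no substantive obstacle; the argument is a direct calculation modelled verbatim on the proof of the preceding proposition for $\mathit{PAs}_3$. The only bookkeeping point is to confirm that $\{r_1,r_2\}$ is a complete generating set for the relations of $\mathit{TAs}_3$, which is immediate from the identity $r_1-r_2=\omega\circ(\id\ot\omega\ot\id)-\omega\circ(\id\ot\id\ot\omega)$; this is what guarantees that the ten extracted relations generate the full relation space of $\dsu(\mathit{TAs}_3)$ rather than a proper subspace.
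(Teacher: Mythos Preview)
Your proposal is correct and follows exactly the approach the paper intends: the paper gives no separate proof for this proposition, prefacing it only with ``Similarly, on the level of operads, we have'' after the $\mathit{PAs}_3$ computation, and your direct application of Proposition~\mref{pp:psupath} to $\dsu_{\{x_i\}}(r_s)$ for $i=1,\dots,5$ and $s=1,2$ reproduces the ten displayed relations verbatim. Your remark that $\{r_1,r_2\}$ generates the full relation space of $\mathit{TAs}_3$ is the only point worth noting explicitly, and you handle it correctly.
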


Furthermore, we can similarly use the arity-splitting of partially or totally associative $n$-algebra and give the notions of ``partially  or totally dendriform $n$-algebra".
We can also consider the power-splitting of the partially and totally associative $n$-algebra and give suitable extensions of tridendriform algebra in the context of $n$-algebras.

\subsubsection{The operad $Dend_\infty$ as the arity splitting of the operad $A_\infty$}
\mlabel{sss:ainf}

An $A_{\infty}$-algebra (or $Ass_{\infty}$-algebra)~\mcite{LV,St} is defined by Stasheff and has important applications in string theory. It has an $n$-ary generating operation  $\omega_{n}$ for every $n \geq 1$ that satisfy the relations
\begin{eqnarray}
&\omega_{1} \circ \omega_{1} =0,& \notag\\
& \partial(\omega_{n}) = \displaystyle \sum_{\tiny{ \begin{array}{l} n=p+q+r\\
k=p+1+r\\
k>1, q>1 \end{array}
}}(-1)^{p+qr}\omega_{k} \circ (id^{\ot p} \ot \omega_{q} \ot id^{\ot r}), \quad n \geq 2,& \mlabel{eq:reln}
\end{eqnarray}
where $
\partial(\omega_{n}) := \omega_{1} \circ \omega_{n} - (-1)^{n-2} \omega_{n} \circ \big(
(\omega_{1}, id,\cdots,id) + \cdots + (id, \cdots, id, \omega_{1}) \big)$  and $k = p + 1 + r$.

A $Dend_{\infty}$-algebra~\cite[\S 13.6.13]{LV} has $n$ $n$-ary generating operations $\omega_{n,i}, 1 \leq
i \leq n$, for each $n \geq 2$, that satisfy the relations
\begin{eqnarray}
&\omega_{1,1} \circ \omega_{1,1} =0,& \notag\\
&\partial(\omega_{n,i}) = \displaystyle \sum_{(p,q,r,\ell,j)} (-1)^{p+qr} \omega_{p+1+r,\ell}(id^{\ot p} \ot \omega_{q,j} \ot id^{\ot r}),& \mlabel{eq:dinf}
\end{eqnarray}
where, for fixed $n$ and $i$, the sum is extended to all the quintuples $(p, q, r, \ell, j)$ satisfying
$$p \geq 0, q \geq 2, r \geq 0, p+q+r = n, 1 \leq \ell \leq p+1+r, 1 \leq j \leq q$$
and the condition
$$\left\{\begin{array}{ll} i = q + \ell -1, & \text{when } 1 \leq p + 1 \leq \ell-1, \\
i = \ell-1 + j, & \text{when  } p + 1 = \ell, \\
i = \ell, &\text{when } \ell + 1 \leq p + 1.\end{array}\right.
$$
Note that the last condition is equivalent to
$$ \left \{\begin{array}{ll}
\ell = i-q+1, & \text{when } p+q+1 \leq i \leq n, \\
j=i-p, \ell=p+1,& \text{when } p+1 \leq i \leq p+q, \\
\ell=i, &1 \leq i \leq p.
\end{array}\right .
$$
For fixed $n$ and $i\in [n]$, by the definition of the arity-splitting and the abbreviation $\omega_{n,i} := \svec{\omega_{n}}{e_{i}}$, we have
$$
\dsu_{i}(\partial(\omega_{n})) = \omega_{1,1} \circ \omega_{n,i} - (-1)^{n-2} \omega_{n,i} \circ \big(
(\omega_{1,1}, id,\cdots,id) + \cdots + (id, \cdots, id, \omega_{1,1}) \big)= \partial (\omega_{n,i}).
$$
Also for any given triple $p,q,r$, we have
\begin{equation*}
\dsu_{i} \big( \omega_{k} \circ (id^{\ot p} \ot \omega_{q} \ot id^{\ot r})\big) = \left\{ \begin{array}{ll} \omega_{k,i} \circ (id^{\ot p} \ot \omega_{q,\ast} \ot id^{\ot r}), & \quad 1 \leq i \leq p,  \\  \omega_{k,p+1} \circ (id^{\ot p} \ot \omega_{q,i-p} \ot id^{\ot r}), & p+1 \leq i \leq p+q, \\ \omega_{k,i-q+1} \circ (id^{\ot p} \ot \omega_{q,\ast} \ot id^{\ot r}), &p+q+1 \leq i \leq n,
 \end{array} \right.
\end{equation*}
where $\displaystyle \omega_{q,\ast} = \sum_{j=1}^{q} \omega_{q,j}$.
Thus Eq.~(\mref{eq:dinf}) is just $\dsu$ applied to Eq.~(\mref{eq:reln}) and we obtain

\begin{prop}
$\dsu(A_{\infty}) = Dend_{\infty}$.
\end{prop}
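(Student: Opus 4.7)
The plan is to check that both the generators and the defining relations of $\dsu(A_\infty)$ match those of $Dend_\infty$ exactly, using the \conf $\cala = (A_n)$ with $A_n = \{\{1\},\{2\},\ldots,\{n\}\}$.

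First I would identify the generators. The operad $A_\infty$ is presented by $\opd = \mathcal{T}_{\hspace*{-0.1cm}ns}(V)/(R)$ with $V_n = \bfk\omega_n$ for $n\geq 1$. By Definition~\ref{defn:vector},
$$\dsu(V)_n \;=\; V_n \otimes \bigoplus_{I\in A_n} \bfk e_I \;=\; \bigoplus_{i=1}^n \bfk\,\svec{\omega_n}{e_{\{i\}}},$$
which has one generator for each pair $(n,i)$ with $1 \leq i \leq n$. Setting $\omega_{n,i}:=\svec{\omega_n}{e_{\{i\}}}$ gives exactly the generating operations of $Dend_\infty$. Note in particular $\omega_{1,1} = \svec{\omega_1}{e_{\{1\}}}$, matching the unary generator.

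Next I would match the relations. The defining relations of $A_\infty$ (Eq.~(\ref{eq:reln})) are locally homogeneous: the relation indexed by $n$ lives on trees with leaf set $[n]$. By Definition~\ref{rule}, the relation space of $\dsu(A_\infty)$ is generated by $\dsu_J$ applied to the $A_\infty$-relations for every $J \in A_n = \{\{1\},\ldots,\{n\}\}$, i.e.\ for each choice of a single distinguished leaf $i \in [n]$. The two displayed formulas immediately before the proposition compute precisely these: $\dsu_{\{i\}}(\partial(\omega_n)) = \partial(\omega_{n,i})$, and the three cases for $\dsu_{\{i\}}\bigl(\omega_k\circ(\id^{\otimes p}\otimes \omega_q\otimes \id^{\otimes r})\bigr)$ distinguish whether $i$ lies in the prefix $[1,p]$, the middle block $[p+1,p+q]$, or the suffix $[p+q+1,n]$.

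Finally I would compare the case distinction with the index constraint in Eq.~(\ref{eq:dinf}). The three cases computed from Proposition~\ref{pp:exp} correspond line-by-line to the three subcases ``$\ell = i$'', ``$\ell = p+1,\ j=i-p$'' and ``$\ell = i-q+1$'' defining the admissible quintuples $(p,q,r,\ell,j)$. Summing over all triples $(p,q,r)$ with $p+q+r=n$ and a fixed $i$, the image $\dsu_{\{i\}}(r_n)$ of the $A_\infty$-relation is therefore identical to the $(n,i)$-relation of $Dend_\infty$, with the occurrence of $\omega_{q,\ast} = \sum_{j=1}^q \omega_{q,j}$ in the boundary cases reflecting precisely that when $i\notin[p+1,p+q]$ the inner vertex label is unconstrained and thus ranges over all its splittings. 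This matches the relations of $Dend_\infty$ in Eq.~(\ref{eq:dinf}), yielding an isomorphism of nonsymmetric operads $\dsu(A_\infty) = Dend_\infty$. The only real bookkeeping obstacle is keeping track of the three positional cases for $i$ and confirming they partition exactly the admissible $(p,q,r,\ell,j)$ of Eq.~(\ref{eq:dinf}); beyond that the identification is mechanical.
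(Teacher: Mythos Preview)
Your proposal is correct and follows essentially the same approach as the paper: the paper's argument is precisely the pair of displayed computations you cite before the proposition, together with the observation that the three positional cases for $i$ recover the index constraints in Eq.~(\ref{eq:dinf}). Your write-up is a slightly more organized recapitulation of that same reasoning, with the added care of explicitly naming the generator identification and the local homogeneity of the $A_\infty$ relations.
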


\subsection{Examples of splittings of symmetric operads}
\mlabel{ss:egs}

We give some examples of splittings of (symmetric) operads. First note that
$$ \dsu(Lie)=\mathrm{BSu}(Lie)=pre\text{-}Lie,\quad
\tsu(Lie) = \mathrm{TSu}(Lie) =PostLie.$$ We next focus on operads that are not binary.

\subsubsection{$n$-Lie operads}
Recall that a $3$-Lie algebra is a vector space with a trilinear skew-symmetric operation $[\cdot, \cdot, \cdot]$ satisfies the $3$-Jacobi identity:
\begin{equation}
[[x_{1},x_{2},x_{3}], x_{4},x_{5}] = [[x_{1},x_{4},x_{5}],x_{2},x_{3}] + [x_{1}, [x_{2},x_{4},x_{5}],x_{3}] + [x_{1},x_{2},[x_{3},x_{4},x_{5}]].
\mlabel{eq:3lie}
\end{equation}
Let $3$-$Lie$ be the operad of the $3$-Lie algebra with product $\omega = [\cdot, \cdot, \cdot]$. Let $r$ denote the homogenous element from the  $3$-Jacobi identity from Eq.~(\mref{eq:3lie}). Then we have
\allowdisplaybreaks{
\begin{eqnarray*}
\dsu_{x_{1}} (r) &=&\svec{\omega}{e_{1}}\left( \svec{\omega}{e_{1}}(x_{1},x_{2},x_{3}), x_{4},x_{5} \right) - \svec{\omega}{e_{1}} \left(  \svec{\omega}{e_{1}} (x_{1},x_{4},x_{5}),x_{2},x_{3} \right) \\
&&- \svec{\omega}{e_{1}} \left(x_{1}, \svec{\omega}{\ast} (x_{2},x_{4},x_{5}), x_{3} \right) -
\svec{\omega}{e_{1}} \left( x_{1},x_{2}, \svec{\omega}{\ast}(x_{3},x_{4},x_{5}) \right),\\
\dsu_{x_{4}} (r) &=& \svec{\omega}{e_{2}}\left( \svec{\omega}{\ast}(x_{1},x_{2},x_{3}), x_{4},x_{5} \right) - \svec{\omega}{e_{1}} \left(  \svec{\omega}{e_{2}} (x_{1},x_{4},x_{5}),x_{2},x_{3} \right) \\
&&- \svec{\omega}{e_{2}} \left(x_{1}, \svec{\omega}{e_{2}} (x_{2},x_{4},x_{5}), x_{3} \right) -
\svec{\omega}{e_{3}} \left( x_{1},x_{2}, \svec{\omega}{e_{2}}(x_{3},x_{4},x_{5}) \right).
\end{eqnarray*}}
Similar computations apply to $\dsu_{x_{2}}, \dsu_{x_{3}}$ and $\dsu_{x_{5}}$. However these relations can also be obtained from the relations of $\dsu_{x_{1}}$ and $\dsu_{x_{4}}$ by a permutation of the variables. Replace the operation $\svec{\omega}{e_{1}}$ by $\{\cdot, \cdot, \cdot\}$. The group actions
$$
\svec{\omega}{e_{1}}^{(23)} = \svec{\omega^{(23)}}{e_{1}} = - \svec{\omega} {e_{1}},
$$
show that $\{\cdot, \cdot, \cdot\}$ satisfies the local-skew symmetry relation. Furthermore

$$
\svec{\omega}{e_{2}}^{(12)} = -\svec{\omega}{e_{1}} , \quad \svec{\omega}{e_{3}}^{(13)} = -\svec{\omega}{e_{1}}.
$$
Since the arity-splitting (that is, bisuccessor) $\dsu(Lie)$ of the operad $Lie$ of the Lie algebra is the operad of the pre-Lie algebra, it is natural to use $\dsu$(3-$Lie$) to give the the following definition.
\begin{defn}\mlabel{defn:3prelie}
{\rm
A {\bf 3-pre-Lie algebra} is a vector space $A$ with a trilinear map $\{\cdot,\cdot,\cdot\} : A^{\ot 3} \longrightarrow A$ such that
\begin{equation}
\{x_{1},x_{2},x_{3}\} = -\{x_{1},x_{3},x_{2}\},
\end{equation}
\begin{eqnarray}
&\{\{x_{1},x_{2},x_{3}\},x_{4},x_{5}\} = \{\{x_{1},x_{4},x_{5}\}, x_{2},x_{3}\} + \{x_{1},\bigcirc \{x_{2},x_{4},x_{5}\},x_{3}\} + \{x_{1},x_{2}, \bigcirc \{x_{3},x_{4},x_{5}\}\},&
\end{eqnarray}
\begin{eqnarray}
&\{x_{4}, \bigcirc\{x_{1},x_{2},x_{3}\}, x_{5}\} = \{\{x_{4},x_{1},x_{5}\},x_{2},x_{3}\}  + \{\{x_{4},x_{2},x_{5}\},x_{3},x_{1}\}  + \{\{x_{4},x_{3},x_{5}\},x_{1},x_{2}\},&
\end{eqnarray}
where $\bigcirc\{x,y,z\} = \{x,y,z\} + \{y,z,x\} + \{z,x,y\}$.
}
\end{defn}

In general, an {\bf $n$-Lie algebra} is a vector space over a field $\bfk$ endowed with an $n$-ary multi-linear skew-symmetric operation $[\cdot, \cdots,\cdot]$ satisfying the $n$-Jacobi identity
\begin{equation}
[[x_{1}, \cdots, x_{n}], x_{n+1}, \cdots, x_{2n-1}] = \sum_{i=1}^{n}[x_{1},\cdots, [x_{i},x_{n+1},\cdots,x_{2n-1}], \cdots, x_{n}].
\end{equation}

Computing the arity-splitting of $n$-Lie and replacing the operation $\svec{\omega}{e_{1}}$ by $\{\cdot, \cdots, \cdot\}$, we have

\begin{defn}\mlabel{defn:nprelie}
{\rm An {\bf $n$-pre-Lie algebra} is a vector space $A$ with a $n$-linear map $\{\cdot,\cdots,\cdot\} : A^{\ot n} \longrightarrow A$ such that
\begin{eqnarray}
\{x_{1},x_{2},\cdots, x_{n}\} &=& {\rm sgn} (\sigma) \{x_{1}, x_{\sigma(2)}, x_{\sigma(3)},\cdots, x_{\sigma(n)}\}, \mbox{where}~ \sigma \in \BS_{n} ~\mbox{and} ~\sigma(1) =1,
\end{eqnarray}
\begin{eqnarray}
\{\{x_{1}, \cdots, x_{n}\}, x_{n+1}, \cdots, x_{2n-1}\} &=&\{\{x_{1}, x_{n+1}, \cdots, x_{2n-1}\}, x_{2}, \cdots, x_{n}\} \notag\\
&& +\sum_{i=2}^{n} \{x_{1},\cdots, \bigcirc\{x_{i},x_{n+1},\cdots,x_{2n-1}\}, \cdots, x_{n}\},
\end{eqnarray}
\begin{eqnarray}
\lefteqn{\{x_{n+1},\bigcirc\{x_{1},\cdots, x_{n}\}, x_{n+2}, \cdots, x_{2n-1}\}} \notag\\
&=&\displaystyle (-1)^{(1+i)(n+1-i)}\sum_{i=1}^{n}  \{\{x_{n+1},x_{i},x_{n+2},\cdots,x_{2n-1}\}, x_{i+1}, x_{i+2}, \cdots, x_{n}, x_{1}, x_{2}, \cdots, x_{i-1}\},
\end{eqnarray}
where $\displaystyle \bigcirc\{x_{1},x_{2},\cdots,x_{n}\} = (-1)^{(1+i)(n+1-i)} \sum_{i=1}^{n}\{x_{i},x_{i+1},x_{i+2}, \cdots, x_{n}, x_{1},x_{2}, \cdots, x_{i-1}\}$.
}
\end{defn}

In fact, there is another case of $3$-ary operation that is closely related to 3-Lie algebras. A {\bf generalized Lie algebra of order 3 or Lie-3 algebra} \cite{BM,BM1,GGR} is a vector space $A$ together with a trilinear skew-symmetric operation $[\cdot,\cdot,\cdot]$ such that
\begin{eqnarray*}
&&[[x_1,x_2,x_3],x_4, x_5]-[[x_1,x_2,x_4],x_3,x_5]+[[x_1,x_3,x_4],x_2,x_5]-[[x_2,x_3,x_4],x_1,x_5]\\
&&+[[x_1,x_2,x_5],x_3,x_4]+[[x_3,x_4,x_5],x_1,x_2]-[[x_1,x_3,x_5],x_2,x_4]
-[[x_2,x_4,x_5], x_1,x_3]\\
&&+[[x_1,x_4,x_5],x_2,x_3]+[[x_2,x_3,x_5],x_1,x_4]=0.
\end{eqnarray*}
It is known~\cite{BM1} that a 3-Lie algebra is a generalized Lie algebra of order 3.

\begin{remark}\mlabel{rem:paslie}
{\rm For an $n$-algebra $A$ and its operation $\omega$, the commutator of $A$ is
$$
\sum_{\sigma \in \BS_{n}} {\rm sgn}(\sigma) \omega(x_{\sigma(1)}, \cdots, x_{\sigma(i)}, \cdots, x_{\sigma(n)}).
$$
A partially associative $3$-algebra $A$ is a 3-Lie admissible algebra, more precisely, the commutator of $A$ makes $A$ into a generalized Lie algebra of order $3$ \mcite{GGR}.
}
\end{remark}

Similarly to the $3$-Lie algebra, we use the arity-splitting of the generalized Lie algebra of order 3 to give the following notion:
\begin{defn}\mlabel{defn:gprelie}
{\rm A {\bf generalized pre-Lie algebra of order $3$} is a vector space $A$ with a trilinear map $\{\cdot,\cdot,\cdot\} : A^{\ot 3} \longrightarrow A$ such that
\begin{equation}
\{x_{1},x_{2},x_{3}\} = -\{x_{1},x_{3},x_{2}\},
\end{equation}
\begin{eqnarray}
\{\{x_{1},x_{2},x_{3}\},x_{4},x_{5}\} &=& \{\{x_{1},x_{2},x_{4}\}, x_{3},x_{5}\} - \{\{x_{1},x_{2},x_{5}\}, x_{3},x_{4}\} - \{\{x_{1},x_{3},x_{4}\}, x_{2},x_{5}\} \notag\\
&+& \{\{x_{1},x_{3},x_{5}\}, x_{2},x_{4}\} - \{\{x_{1},x_{4},x_{5}\}, x_{2},x_{3}\} - \{x_{1}, \bigcirc\{x_{2},x_{3},x_{4}\},x_{5}\} \\
&+& \{x_{1}, \bigcirc\{x_{2},x_{3},x_{5}\},x_{4}\} - \{x_{1}, \bigcirc\{x_{2},x_{4},x_{5}\},x_{3}\} + \{x_{1}, \bigcirc \{x_{3},x_{4},x_{5}\},x_{2}\}\notag
\end{eqnarray}
where $\bigcirc \{x,y,z\} = \{x,y,z\} + \{y,z,x\} + \{z,x,y\}$.
}
\end{defn}

\begin{prop}\label{pp:localcom}
Define the local commutator of a partially associative $3$-algebra $A$ by
$$
\{x,y,z\} = (x,y,z) - (x,z,y).
$$
Then $(A, \{\cdot, \cdot, \cdot\})$ is a generalized pre-Lie algebra of order $3$.
\end{prop}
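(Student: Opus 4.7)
The skew-symmetry in the last two arguments is immediate from the definition: swapping $x_2$ and $x_3$ in $\{x_1,x_2,x_3\}=(x_1,x_2,x_3)-(x_1,x_3,x_2)$ negates the expression. So the entire content is the generalized pre-Lie identity, and I will prove it by direct reduction to the partial 3-associativity relation~(\mref{eq:pas}).

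The plan is to expand every local commutator on both sides of the identity using the definition $\{u,v,w\}=(u,v,w)-(u,w,v)$. Each resulting monomial is a nested ternary product in five variables of one of three shapes: $((a,b,c),d,e)$, $(a,(b,c,d),e)$, or $(a,b,(c,d,e))$. The partial 3-associativity identity rewrites the first shape as a linear combination of the other two:
\begin{equation*}
((a,b,c),d,e) \;=\; -(a,(b,c,d),e)\;-\;(a,b,(c,d,e)),
\end{equation*}
so I use it as a rewrite rule to eliminate every doubly-nested-on-the-left term appearing on the left-hand side of the proposed identity. After this normalization, the only ``nesting positions'' that remain are middle and right, and a crucial structural fact emerges: every term that arises carries $x_1$ in the outermost first slot, because the left-hand side $\{\{x_1,x_2,x_3\},x_4,x_5\}$ begins with $x_1$ in that slot and the rewrite rule preserves this. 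This is exactly the structural hallmark of an ``arity-splitting with index $1$'' and matches the shape of the right-hand side of the generalized pre-Lie relation, where $x_1$ also always sits in the outermost first slot.

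With both sides reduced to the normal form $(x_1,\square,\square)$ where $\square$ is either a single variable or an inner ternary product, I then simply compare the coefficient of each monomial. The terms $\{\{x_1,x_i,x_j\},x_k,x_\ell\}$ on the right-hand side, when expanded, produce monomials $(x_1,(x_i,x_j,x_k),x_\ell)$, $(x_1,x_i,(x_j,x_k,x_\ell))$ and their $(23)$-transposes in the inner slot; the terms of the form $\{x_1,\bigcirc\{\cdot,\cdot,\cdot\},x_?\}$ produce the cyclic sum of such monomials with the inner ternary in the middle slot. Matching of coefficients reduces to a purely combinatorial check that the signed sum of all 5-variable monomials arising on the left equals that arising on the right, keyed by the position of the inner ternary product and the ordering of its inputs.

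The main obstacle is bookkeeping: the left-hand side produces four monomials before applying partial associativity and eight after, while the right-hand side produces roughly thirty monomials after expansion. I expect no conceptual difficulty beyond organizing these into a table indexed by the triple (position of the inner ternary, ordered content of the inner ternary, ordered content of the outer slots), and reading off that corresponding entries agree. A convenient organizing principle is that the full cyclic symmetrization $\bigcirc\{\cdot,\cdot,\cdot\}$ coincides with the total sgn-weighted symmetrization $\sum_{\sigma\in\BS_3}\mathrm{sgn}(\sigma)(x_{\sigma(1)},x_{\sigma(2)},x_{\sigma(3)})$, which is precisely the generalized-Lie bracket of Remark~\mref{rem:paslie}; this compatibility guarantees that the right-hand side, when summed over its full alternation, recovers the generalized 3-Jacobi relation already known to hold, providing a useful consistency check on the matching.
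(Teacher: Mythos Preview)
Your proposal is correct and is essentially the paper's approach: the paper's entire proof reads ``The proof is a straightforward computation,'' so your expansion-and-rewrite strategy \emph{is} that computation, with considerably more organizational detail than the paper supplies. Your structural observation that $x_1$ remains in the outermost first slot after applying partial associativity is accurate and does streamline the bookkeeping; your identification of $\bigcirc\{\cdot,\cdot,\cdot\}$ with the full sgn-weighted antisymmetrization is also correct and is indeed the link to Remark~\mref{rem:paslie}. One minor count to adjust: the right-hand side of the identity in Definition~\mref{defn:gprelie} has five terms of type $\{\{x_1,\cdot,\cdot\},\cdot,\cdot\}$ (each yielding eight monomials after rewriting) and four $\bigcirc$-terms (each yielding twelve), so the total is closer to ninety monomials than thirty---still purely mechanical, but worth budgeting for.
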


\begin{proof}
The proof is a straightforward computation.
\end{proof}

See Proposition~\mref{prop:inclusion} for the relationship between the $3$-pre-Lie algebra and generalized pre-Lie algebra of order 3.

\subsubsection{The operad $PL_\infty$ as the arity splitting of the operad $L_\infty$}
\mlabel{sss:linf}

An {\bf $L_{\infty}$-algebra} (or {\bf $Lie_{\infty}$-algebra}) \mcite{LM,LV}on a graded vector space L with a system $\{m_{n}~|~ n \geq1 \}$ of linear maps $m_{n}:L^{\ot n}  \longrightarrow  L$ with $\deg(m_{n}) = n-2$ that are antisymmetric in the sense that
$$
m_{n}(x_{\sigma(1)}, \cdots, x_{\sigma(n)}) = sgn(\sigma)m_{n}(x_{1},\cdots ,x_{n}), \quad \text{for all } \sigma \in \mathbb{S}_{n}, x_{1},\cdots, x_{n} \in L,
$$
and satisfy the following generalized form of the Jacobi
identity:
\begin{equation}
R_{L} :=\sum_{i+j=n+1}\sum_{\sigma \in Sh_{i,n-i}} \epsilon(\sigma, \bar{v}) m_{j} (m_{i} (x_{\sigma(1)},\cdots, x_{\sigma(i)}), x_{\sigma(i+1)},\cdots, x_{\sigma(n)}) = 0.
\mlabel{eq:rl}
\end{equation}
Here $Sh_{i,n-i}\subseteq \BS_n$ is the set of $(i,n-i)$-shuffles.

A {\bf $PL_{\infty}$} (or {\bf $pre$-$Lie_{\infty}$-algebra}) \mcite{Ch1} on a graded vector space $V$ is a system of linear maps
$\ell_{n}: V \ot S^{n}(V) \longrightarrow V$ of degree $\ell_{n} = n -1$, $n \geq 1$, that satisfy
\begin{eqnarray}
\lefteqn{\sum_{i+j=n}\sum_{\sigma \in Sh_{i,n-i}} \epsilon(\sigma, \bar{v}) \ell_{j}(\ell_{i}(v_{0} \ot v_{\sigma(1)},\cdots, v_{\sigma(i)}), v_{\sigma(i+1)}, \cdots, v_{\sigma(n)})} \notag\\
&& + (-1)^{|v_{0}||\ell_{i}|} \sum_{i+j=n, j \geq 1}\sum_{\sigma \in Sh_{1,i,n-i-1}} \epsilon(\sigma, \bar{v})\ell_{j}(v_{0} \ot \ell_{i}(v_{\sigma(1)}, \cdots, v_{\sigma(i+1)}), v_{\sigma(i+2)}, \cdots, v_{\sigma(n)}) = 0, \mlabel{eq:ch}
\end{eqnarray}
where $S(V)=\bigoplus_{n\geq 0} S^n(V)$ is the graded symmetric algebra generated by $V$ and $\epsilon(\sigma, \bar{v})$ is the Koszul sign.

We next relate $PL_\infty$ to $\dsu(L_\infty)$.
For any $n\geq 1$, use the abbreviation $m_{n,i} = \svec{m_{n}}{e_{i}}$. By the $\BS_n$-action on $\svec{m_n}{e_i}$, we have
$$
m_{n,1} (x_{1},x_{\sigma(2)}, x_{\sigma(3)},\cdots, x_{\sigma(n)}) = sgn(\sigma) m_{n,1}(x_{1}, x_{2},\cdots, x_{n}) , \sigma \in \mathbb{S}_{n}, \sigma(1) =1,
$$
and thus $m_{n,1}$ can be regarded as a linear map from $V \ot S^{n-1}(V)$ to $V$.

For $i \neq 1$, we have
$$
m_{n,i} = {\rm sgn} ((1i)) m_{n,1}^{(1i)} = - m_{n,1}^{(1i)}.
$$
Fixed $n$ and $1\leq i\leq n-1$, there are $|Sh_{i,n-i}| = C_{n}^{i}$ $(i,n-i)$-shuffles. These shuffles can be divided into two subsets:
\begin{enumerate}
\item $Sh_{i,n-i}^{1} :=\{\sigma\in Sh_{i,n-i} ~|~ \sigma(1) =1\}$,  \quad   $|Sh_{i,n-i}^{1}| = C_{n-1}^{i-1}$;
\item $Sh_{i,n-i}^{2} :=\{\sigma\in Sh_{i,n-i} ~|~ \sigma(i+1) =1\}$,  \quad   $|Sh_{i,n-i}^{2}| = C_{n-1}^{i}$.
\end{enumerate}

For each $\sigma \in Sh_{i,n-i}^{1}$, we have
$$
\dsu_{x_{1}} \big( m_{j} (m_{i} (x_{\sigma(1)},\cdots, x_{\sigma(i)}), x_{\sigma(i+1)},\cdots, x_{\sigma(n)}) \big)= m_{j,1} (m_{i,1} (x_{1},x_{\sigma(2)}\cdots, x_{\sigma(i)}), x_{\sigma(i+1)},\cdots, x_{\sigma(n)}).
$$
Then we have
\begin{eqnarray*}
\lefteqn{\sum_{\sigma \in Sh_{i,n-i}^{1}} \epsilon(\sigma, \bar{v})  m_{j,1} (m_{i,1} (x_{1},x_{\sigma(2)}\cdots, x_{\sigma(i)}), x_{\sigma(i+1)},\cdots, x_{\sigma(n)})}\\
&=& \sum_{\sigma \in Sh_{i-1,n-i}} \epsilon(\sigma, \bar{v})  m_{j,1} (m_{i,1} (x_{1},x_{\sigma(2)}\cdots, x_{\sigma(i)}), x_{\sigma(i+1)},\cdots, x_{\sigma(n)}),
\end{eqnarray*}
where $Sh_{i-1,n-i}$ is regarded as the set of $(i-1,n-i)$-shuffles on the set of $\{2,\cdots,n\}$.

For each $\sigma \in Sh_{i,n-i}^{2}$, we have
$$
\dsu_{x_{1}} \big( m_{j} ( m_{i} (x_{\sigma(1)},\cdots, x_{\sigma(i)}), x_{\sigma(i+1)},\cdots, x_{\sigma(n)}) \big)= m_{j,2} (m_{i,\star} (x_{\sigma(1)},\cdots, x_{\sigma(i)}), x_{1},x_{\sigma(i+2)},\cdots, x_{\sigma(n)}),
$$
where $m_{i,\star} = \sum_{s=1}^{i} m_{i,s}$.
We also have
\begin{eqnarray*}
\lefteqn{m_{j,2} ( m_{i,s} (x_{\sigma(1)},\cdots, x_{\sigma(i)}), x_{1},x_{\sigma(i+2)},\cdots, x_{\sigma(n)})}\\
&=& (-1)^{|x_{1}||m_{i,s}|} m_{j,1}(x_{1}, m_{i,s} (x_{\sigma(1)},\cdots, x_{\sigma(i)}), x_{\sigma(i+2)},\cdots, x_{\sigma(n)}),
\end{eqnarray*}
and $(-1)^{|x_{1}||m_{i,s}|} = (-1)^{|x_{1}||m_{i,1}|}$ where
$$
m_{i,s}(y_{1},y_{2},\cdots,y_{i}) = (-1)^{s-1} m_{i,1} (y_{s},y_{1},y_{2},\cdots,y_{s-1},y_{s+1},y_{s+2},\cdots, y_{i}).
$$
Thus we obtain
$$
m_{i,\star} (y_{1},y_{2},\cdots,y_{i}) = \sum_{\tau \in Sh_{1, i-1}}  \epsilon(\tau, \bar{v}) m_{i,1} (y_{\tau(1)},\cdots,y_{\tau(i)}).
$$

\begin{lemma}
The map
$$ \Gamma: Sh_{i,n-i} \times Sh_{1,i-1} \longrightarrow Sh_{1,i-1,n-i}, \quad
\Gamma \big(\sigma, \tau\big)(j)  = \left\{ \begin{array}{ll} \sigma(\tau(j)),& 1 \leq j \leq i,\\ \sigma(j), & j \geq i+1,  \end{array} \right.
$$
is a bijection.
\mlabel{lem:bij}
\end{lemma}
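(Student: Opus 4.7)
My plan is to verify $\Gamma$ is well-defined (lands in the target set) and then construct an explicit inverse, which is the most transparent way to establish a bijection between two explicitly described sets of shuffles.

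First I would check that $\Gamma(\sigma,\tau)$ really is a $(1,i-1,n-i)$-shuffle. Recall $\sigma \in Sh_{i,n-i}$ means $\sigma(1)<\cdots<\sigma(i)$ and $\sigma(i+1)<\cdots<\sigma(n)$, and $\tau\in Sh_{1,i-1}$ means $\tau$ is a permutation of $\{1,\dots,i\}$ with $\tau(2)<\cdots<\tau(i)$. By the definition of $\Gamma$, the last block $\Gamma(\sigma,\tau)(i+1)<\cdots<\Gamma(\sigma,\tau)(n)$ is increasing because it agrees with $\sigma$ on $\{i+1,\dots,n\}$. For the middle block $2\leq j\leq i$, the values $\Gamma(\sigma,\tau)(j)=\sigma(\tau(j))$ are increasing because $\tau(2)<\cdots<\tau(i)$ takes values in $\{1,\dots,i\}$ and $\sigma$ is increasing on that set. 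Hence $\Gamma(\sigma,\tau)\in Sh_{1,i-1,n-i}$.

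Next I would construct the inverse $\Gamma^{-1}: Sh_{1,i-1,n-i} \to Sh_{i,n-i}\times Sh_{1,i-1}$ as follows. Given $\rho\in Sh_{1,i-1,n-i}$, define $\sigma$ by setting $\sigma(j)=\rho(j)$ for $j\geq i+1$ and letting $\sigma(1)<\sigma(2)<\cdots<\sigma(i)$ be the increasing rearrangement of the set $\{\rho(1),\rho(2),\dots,\rho(i)\}$. Then $\sigma$ is manifestly an $(i,n-i)$-shuffle. Now define $\tau\in\BS_i$ by the requirement $\sigma(\tau(j))=\rho(j)$ for $1\leq j\leq i$; this determines $\tau$ uniquely since $\sigma|_{\{1,\dots,i\}}$ is a bijection onto $\{\rho(1),\dots,\rho(i)\}$. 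The condition $\rho(2)<\cdots<\rho(i)$ together with monotonicity of $\sigma$ on $\{1,\dots,i\}$ forces $\tau(2)<\cdots<\tau(i)$, so $\tau\in Sh_{1,i-1}$. A direct check shows $\Gamma(\sigma,\tau)=\rho$ and, conversely, that any preimage of $\rho$ under $\Gamma$ must agree with this $(\sigma,\tau)$ on both blocks, giving $\Gamma^{-1}\circ\Gamma=\mathrm{id}$ and $\Gamma\circ\Gamma^{-1}=\mathrm{id}$.

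As a sanity check on the counting, note that $|Sh_{i,n-i}|\cdot|Sh_{1,i-1}| = \binom{n}{i}\cdot i = \frac{n!}{(i-1)!(n-i)!} = |Sh_{1,i-1,n-i}|$, so the bijection is consistent with cardinalities. There is no real obstacle here; the only place to be careful is in verifying that the increasing condition on $\tau$ transfers correctly through $\sigma$, which uses precisely that $\sigma$ is monotone on the block $\{1,\dots,i\}$. This is exactly why the natural pairing of an $(i,n-i)$-shuffle with a $(1,i-1)$-shuffle on the first block reproduces a $(1,i-1,n-i)$-shuffle.
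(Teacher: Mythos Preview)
Your proof is correct and, in fact, more complete than the paper's: you verify that $\Gamma$ lands in $Sh_{1,i-1,n-i}$ (which the paper tacitly assumes) and then exhibit an explicit inverse. The paper instead argues only injectivity---observing that $\Gamma(\sigma,\tau)=\Gamma(\sigma',\tau')$ forces $\sigma=\sigma'$ on $\{i+1,\dots,n\}$, hence everywhere (since the values on $\{1,\dots,i\}$ are the increasing rearrangement of the complement), and then $\tau=\tau'$---and concludes bijectivity from the equality of cardinalities you computed as a sanity check. Your uniqueness-of-preimage step is exactly this injectivity argument, so the two proofs share the same core; yours simply does the extra work of building the inverse explicitly rather than invoking the counting shortcut, which makes it more self-contained at the cost of a little length.
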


\begin{proof}
This map is injective since if $\Gamma(\sigma,\tau)=\Gamma(\sigma',\tau')$, then $\sigma(j)=\sigma'(j)$ for $j\geq i+1$ which implies that $\sigma=\sigma'$ and then $\tau=\tau'$. Then the map must be a bijection since the cardinality of the domain and codomain are the same.
\end{proof}

By Lemma~\mref{lem:bij}, we have

\begin{eqnarray*}
\lefteqn{ \sum_{\sigma \in Sh_{i,n-i}^{2}} \epsilon(\sigma, \bar{v}) m_{j,1}(x_{1}, m_{i,\star} (x_{\sigma(1)},\cdots, x_{\sigma(i)}), x_{\sigma(i+2)},\cdots, x_{\sigma(n)})}\\
&=& \sum_{\sigma \in Sh_{i,n-i}^{2}} \sum_{\tau\in Sh_{1,i-1}} \epsilon(\sigma, \bar{v}) \epsilon(\tau, \bar{v})m_{j,1}(x_{1}, m_{i,1} (x_{\sigma(\tau(1))},\cdots, x_{\sigma(\tau(i))}), x_{\sigma(i+2)},\cdots, x_{\sigma(n)}) \\
&=& \sum_{\sigma \in Sh_{1,i-1,n-i-1}}  \epsilon(\sigma, \bar{v})  m_{j,1}(x_{1}, m_{i,1} (x_{\sigma(1)},\cdots, x_{\sigma(i)}), x_{\sigma(i+2)},\cdots, x_{\sigma(n)}).
\end{eqnarray*}
Then we have

\begin{eqnarray*}
\lefteqn{\dsu_{x_{1}} \big(  \sum_{\sigma \in Sh_{i,n-i}} \epsilon(\sigma, \bar{v}) m_{j} ( m_{i} (x_{\sigma(1)},\cdots, x_{\sigma(i)}), x_{\sigma(i+1)},\cdots, x_{\sigma(n)}) \big)}\\
&=&  \sum_{\sigma \in Sh_{i,n-i}^{1}}  \epsilon(\sigma, \bar{v}) m_{j,1} ( m_{i,1} (x_{1},x_{\sigma(2)},\cdots, x_{\sigma(i)}), x_{\sigma(i+1)},\cdots, x_{\sigma(n)}) \\
&&+ \sum_{\sigma \in Sh_{i,n-i}^{2}}   \epsilon(\sigma, \bar{v}) m_{j,1}(x_{1}, m_{i,\star} (x_{\sigma(1)},\cdots, x_{\sigma(i)}), x_{\sigma(i+2)},\cdots, x_{\sigma(n)})\\
&=& \sum_{\sigma \in Sh_{i-1,n-i}}   \epsilon(\sigma, \bar{v}) m_{j,1} ( m_{i,1} (x_{1},x_{\sigma(2)},\cdots, x_{\sigma(i)}), x_{\sigma(i+1)},\cdots, x_{\sigma(n)}) \\
&&+(-1)^{|x_{1}||m_{i,1}|} \sum_{\sigma \in Sh_{1,i-1,n-i-1}}   \epsilon(\sigma, \bar{v}) m_{j,1}(x_{1}, m_{i,1} (x_{\sigma(1)},\cdots, x_{\sigma(i)}), x_{\sigma(i+2)},\cdots, x_{\sigma(n)})\\
&=& \sum_{\sigma \in Sh_{i-1,n-i}}   \epsilon(\sigma, \bar{v}) \ell_{n-i} ( \ell_{i-1} (x_{1},x_{\sigma(2)},\cdots, x_{\sigma(i)}), x_{\sigma(i+1)},\cdots, x_{\sigma(n)}) \\
&&+(-1)^{|x_{1}||\ell_{i-1}|} \sum_{\sigma \in Sh_{1,i-1,n-i-1}}   \epsilon(\sigma, \bar{v}) \ell_{n-i}(x_{1}, \ell_{i-1} (x_{\sigma(1)},\cdots, x_{\sigma(i)}), x_{\sigma(i+2)},\cdots, x_{\sigma(n)}),
\end{eqnarray*}
where the last equation is obtained by replacing $m_{i,1}$ with $\ell_{i-1}$ and $m_{j,1}$ with $\ell_{j-1} = \ell_{n+1-i-1} = \ell_{n-i}$ and $ j \geq 1$.
Hence, for $R_L$ defined in Eq.~(\mref{eq:rl}), we have
\begin{eqnarray*}
\dsu_{x_{1}} (R_{L}) &=& \sum_{i+j=n+1}\sum_{\sigma \in Sh_{i-1,n-i}}   \epsilon(\sigma, \bar{v}) \ell_{n-i} ( \ell_{i-1} (x_{1},x_{\sigma(2)},\cdots, x_{\sigma(i)}), x_{\sigma(i+1)},\cdots, x_{\sigma(n)}) \\
&&+(-1)^{|x_{1}||\ell_{i-1}|} \sum_{i+j=n+1,j \geq 1}\sum_{\sigma \in Sh_{1,i-1,n-i-1}}   \epsilon(\sigma, \bar{v}) \ell_{n-i}(x_{1}, \ell_{i-1} (x_{\sigma(1)},\cdots, x_{\sigma(i)}), x_{\sigma(i+2)},\cdots, x_{\sigma(n)})\\
&=& \sum_{s+t= n-1} \sum_{\sigma \in Sh_{s,(n-1)-s}} \epsilon(\sigma, \bar{v}) \ell_{t} ( \ell_{s} (x_{1},x_{\sigma(2)},\cdots, x_{\sigma(s)},x_{\sigma(s+1)}), x_{\sigma(s+2)},\cdots, x_{\sigma(n)}) \\
&&+(-1)^{|x_{1}||\ell_{s}|} \sum_{s+t=n-1, t \geq 1}\sum_{\sigma \in Sh_{1,s,(n-1)-s-1}}   \epsilon(\sigma, \bar{v}) \ell_{t}(x_{1}, \ell_{s} (x_{\sigma(1)},\cdots, x_{\sigma(s+1)}), x_{\sigma(s+3)},\cdots, x_{\sigma(n)}).
\end{eqnarray*}
This agrees with the relations of $PL_{\infty}$ in Eq.~(\mref{eq:ch}). Note that $\dsu_{x_{p}}(R_L), 2\leq p\leq n$ can be obtained from $\dsu_{x_{1}}(R_L)$ by a permutation of variables. Therefore, we have

\begin{prop}
$\dsu(L_{\infty}) = PL_{\infty}$.
\end{prop}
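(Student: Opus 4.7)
The plan is to verify the identification of operads by matching generators and relations. First I would identify the space of generators of $\dsu(L_\infty)$: by Definition~\ref{defn:vector}, each generator $m_n$ of $L_\infty$ produces $n$ generators $m_{n,i}:=\svec{m_n}{e_i}$, $1\leq i\leq n$. The $\BS_n$-action $\svec{m_n}{e_i}^{\sigma}=\svec{m_n^{\sigma}}{e_{\sigma(i)}}$ combined with the antisymmetry of $m_n$ forces $m_{n,i}=-\svec{m_n}{e_1}^{(1i)}$ for $i\neq 1$, so that every $m_{n,i}$ is determined by $m_{n,1}$, and moreover $m_{n,1}(x_1,x_{\sigma(2)},\ldots,x_{\sigma(n)})=\mathrm{sgn}(\sigma)\,m_{n,1}(x_1,x_2,\ldots,x_n)$ for $\sigma$ fixing $1$. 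This says precisely that $m_{n,1}$ descends to a map $V\ot S^{n-1}(V)\to V$, which is exactly the data of a $PL_\infty$-operation $\ell_{n-1}$. Setting $\ell_{n-1}:=m_{n,1}$ gives the correspondence of generators.

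Next I would match the relations by applying the $\cala$-splitting to $R_L$ of Eq.~(\ref{eq:rl}). By Proposition~\ref{pp:psupath}, $\dsu_{x_1}(R_L)$ is computed tree by tree: for each summand in $R_L$, the label of the outer vertex becomes $m_{j,k}$ where $k$ is the leaf-position of $x_1$ in the outer grafting, and the label of the inner vertex is $m_{i,1}$ if $x_1$ feeds into it and $m_{i,\star}:=\sum_s m_{i,s}$ otherwise. This splits the sum over $\sigma\in Sh_{i,n-i}$ into the two natural cases: $Sh_{i,n-i}^1=\{\sigma(1)=1\}$, for which one obtains $m_{j,1}(m_{i,1}(x_1,x_{\sigma(2)},\ldots),\ldots)$ and the shuffles reindex as $Sh_{i-1,n-i}$ on $\{2,\ldots,n\}$; and $Sh_{i,n-i}^2=\{\sigma(i+1)=1\}$, which yields $m_{j,2}(m_{i,\star}(\ldots),x_1,\ldots)$.

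The main obstacle is handling the second case, where one needs to pull $x_1$ out to the first slot and rewrite $m_{i,\star}$ in terms of $m_{i,1}$ alone. For this I would first use the $\BS_n$-equivariance $m_{j,2}=-m_{j,1}^{(12)}$ to move $x_1$ to the leftmost slot, producing the Koszul sign $(-1)^{|x_1||m_{i,1}|}$. Then I would expand $m_{i,s}$ as a signed shuffle of $m_{i,1}$ via the antisymmetry, obtaining $m_{i,\star}(y_1,\ldots,y_i)=\sum_{\tau\in Sh_{1,i-1}}\epsilon(\tau,\bar v)\,m_{i,1}(y_{\tau(1)},\ldots,y_{\tau(i)})$. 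The combinatorial heart is Lemma~\ref{lem:bij}: the map $(\sigma,\tau)\mapsto\sigma\circ\tau$ gives a bijection $Sh_{i,n-i}^2\times Sh_{1,i-1}\xrightarrow{\sim} Sh_{1,i-1,n-i-1}$, and the Koszul signs multiply correctly. This converts the double sum into a single sum over $Sh_{1,i-1,n-i-1}$, matching the second term of Eq.~(\ref{eq:ch}).

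Finally I would re-index by $s=i-1$, $t=n-i$ so that the first sum becomes $\sum_{s+t=n-1}\sum_{\sigma\in Sh_{s,n-1-s}}\ell_t(\ell_s(x_1,\ldots),\ldots)$ and the second becomes $(-1)^{|x_1||\ell_s|}\sum_{s+t=n-1,t\geq 1}\sum_{\sigma\in Sh_{1,s,n-s-2}}\ell_t(x_1,\ell_s(\ldots),\ldots)$, which is precisely Eq.~(\ref{eq:ch}). It remains to observe that $\dsu_{x_p}(R_L)$ for $p\neq 1$ is obtained from $\dsu_{x_1}(R_L)$ by the transposition $(1p)\in\BS_n$ acting on the variables, so no new relations are generated. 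Together with the generator correspondence above, this yields an isomorphism of operads $\dsu(L_\infty)\cong PL_\infty$.
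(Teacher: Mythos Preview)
Your proposal is correct and follows essentially the same approach as the paper: you identify the generators via $\ell_{n-1}:=m_{n,1}$, split the shuffle sum into the cases $\sigma(1)=1$ and $\sigma(i+1)=1$, handle the second case by moving $x_1$ to the front and expanding $m_{i,\star}$ as a sum over $Sh_{1,i-1}$, invoke the bijection of Lemma~\ref{lem:bij} to collapse the double sum, and then reindex to recover Eq.~(\ref{eq:ch}). The paper proceeds identically, including the observation that the remaining $\dsu_{x_p}(R_L)$ arise by permuting variables.
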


\section{The splitting property and functorial property}
\mlabel{sec:spl}
In this section we prove the splitting property of \spl-splitting of an operad and thus justify the term \spl-splitting. We also prove that the process of \spl-splitting is compatible with some morphisms between operads.

\subsection{The splitting property}
We recall the following splitting property of the dendriform algebra that is simple yet fundamental in motivating all the subsequent studies of dendriform type or Loday algebras.

\begin{prop}
\mcite{Lo2} Let $(A,\prec,\succ)$ be a dendriform algebra. Then the operation on $A$ defined by $x \ast y:=x\prec y +x\succ y$ is associative.
\mlabel{pp:loday}
\end{prop}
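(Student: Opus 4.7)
The plan is a direct calculation that expands both sides of associativity and matches terms via the three dendriform axioms in Eq.~(\mref{eq:d1}), where $\ast=\prec+\succ$ (the $\cdot$ operation is absent in the dendriform, as opposed to tridendriform, case). By bilinearity, both $(x\ast y)\ast z$ and $x\ast(y\ast z)$ split into four summands indexed by the choice of $\prec$ versus $\succ$ at each of the two binary products.

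First I would expand
\[
(x\ast y)\ast z = (x\prec y)\prec z + (x\succ y)\prec z + (x\prec y)\succ z + (x\succ y)\succ z,
\]
and similarly
\[
x\ast(y\ast z) = x\prec(y\prec z) + x\prec(y\succ z) + x\succ(y\prec z) + x\succ(y\succ z).
\]
Then I would apply the three relations of Eq.~(\mref{eq:d1}) to the four terms on the left. The first relation rewrites $(x\prec y)\prec z$ as $x\prec(y\ast z)=x\prec(y\prec z)+x\prec(y\succ z)$, accounting for two of the right-hand terms. The second relation directly rewrites $(x\succ y)\prec z$ as $x\succ(y\prec z)$, matching a third. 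The third relation groups the remaining pair $(x\prec y)\succ z+(x\succ y)\succ z=(x\ast y)\succ z$ and rewrites this as $x\succ(y\succ z)$. Summing these three identities produces exactly $x\ast(y\ast z)$.

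There is essentially no obstacle beyond bookkeeping: the three axioms of Eq.~(\mref{eq:d1}) are engineered precisely so that after regrouping via the definition of $\ast$ they collapse to associativity of $\ast$. This is the computational shadow of the structural fact noted in \S\mref{sss:dend}, that the operad $\mathit{Dend}$ is the arity-splitting of the operad $\mathit{As}$, and it is this elementary verification that will be generalized to arbitrary \spl-splittings of operads in the subsequent sections.
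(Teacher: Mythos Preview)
Your argument is correct. The paper does not actually supply a proof of this proposition: it is stated with a citation to~\mcite{Lo2} as a known, motivating result, and the subsequent Theorem~\mref{pp:suast} is presented as its operadic generalization rather than as a derivation of it. Your direct verification---summing the three axioms of Eq.~(\mref{eq:d1}) after expanding $\ast$---is exactly the standard proof one would give, and there is nothing to correct.
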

On the operad level, this is interpreted as an operad morphism
\begin{equation}
Asso \to Dend, \quad \ast \mapsto \,\prec +\succ,
\mlabel{eq:dslp}
\end{equation}
from the operad $Asso$ of the associative algebra to the operad $Dend$ of the dendriform algebra. It is in this sense that the operations $\prec$ and $\succ$ give a splitting of the associative product $\ast$. This property has been generalized to many binary operads over the years and then eventually to all binary operads~\mcite{BBGN}. We will further generalize this property to the $\confm$-splitting of any operad.

\begin{lemma}\mlabel{lem:arity}
Let $\gensp=\bigoplus\limits_{n\geq 1}V_n$ be an arity graded vector space with basis $\genbas=\coprod_{n\geq 1}\genbas_{n}$ and let $\confm$ be either $\cala$ or the trivial \conf in Example~\mref{ex:conf}. For a labeled planar $n$-tree $\tau \in \calt(\genbas)$, we have the following equation in $\calt(V)$:
\begin{eqnarray}
\sum_{J \in \confn_{|\lin(\tau)|}} \ssp_{J}(\tau) = \ssp(\tau). \mlabel{eq:bsuall}
\end{eqnarray}
\mlabel{lem:all}
\end{lemma}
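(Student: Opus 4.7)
The plan is to prove the identity by induction on $n := |\lin(\tau)|$, working directly from the recursive Definition~\mref{de:vtilo} rather than from the expanded form in Proposition~\mref{pp:exp}. The two allowed configurations split into an almost-trivial case and one genuine induction.

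For the trivial \conf $\confm=([n])$ the sum on the left has the unique summand $\ssp_{[n]}(\tau)$. For every vertex $\gop$ of arity $\ell$ one has $[n]\sqcap \gop=[\ell]\in \confn_\ell$, and since $\confn_\ell=\{[\ell]\}$ the decoration $\svec{\gop}{\ast_\confm}$ equals the single term $\svec{\gop}{e_{[\ell]}}$. Thus Proposition~\mref{pp:exp} relabels every vertex of $\tau$ in the same way on both sides and the identity is immediate. For $\confm=\cala$ the base case $n=1$ is trivial. For the inductive step, write $\tau=\omega(\tau_1\vee\cdots\vee\tau_\ell)$ with $\omega\in\genbas_\ell$, and for each leaf label $k\in[n]$ let $i(k)\in[\ell]$ be the unique index with $k\in\lin(\tau_{i(k)})$. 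Then $\{k\}\sqcap\omega=\{i(k)\}$ while $\{k\}\cap\lin(\tau_j)=\emptyset$ for $j\neq i(k)$, so using the convention $\dsu_{\emptyset}=\dsu$ from Definition~\mref{de:vtilo} gives
\begin{equation*}
\dsu_{\{k\}}(\tau)=\svec{\omega}{e_{\{i(k)\}}}\bigl(\dsu(\tau_1)\vee\cdots\vee\dsu_{\{k\}}(\tau_{i(k)})\vee\cdots\vee\dsu(\tau_\ell)\bigr).
\end{equation*}
Summing over $k\in[n]=\bigsqcup_i\lin(\tau_i)$ and regrouping the sum according to $i=i(k)$ yields
\begin{equation*}
\sum_{k\in[n]}\dsu_{\{k\}}(\tau)=\sum_{i=1}^\ell \svec{\omega}{e_{\{i\}}}\Bigl(\dsu(\tau_1)\vee\cdots\vee\Bigl(\sum_{k\in\lin(\tau_i)}\dsu_{\{k\}}(\tau_i)\Bigr)\vee\cdots\vee\dsu(\tau_\ell)\Bigr).
\end{equation*}
The induction hypothesis applied to each $\tau_i$ (which has strictly fewer leaves) replaces the inner sum by $\dsu(\tau_i)$. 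Pulling $\sum_{i=1}^\ell \svec{\omega}{e_{\{i\}}}=\svec{\omega}{\ast_\cala}$ out of the grafting by linearity of the root decoration then recovers $\svec{\omega}{\ast_\cala}(\dsu(\tau_1)\vee\cdots\vee\dsu(\tau_\ell))=\dsu(\tau)$, as required by Definition~\mref{defn:vector}.

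The only bookkeeping subtlety is to verify, in the recursive formula for $J=\{k\}$, that the sibling branches $\tau_j$ with $j\neq i(k)$ indeed contribute $\dsu_\emptyset(\tau_j)=\dsu(\tau_j)$ rather than an honest splitting; this is forced by $\{k\}\cap\lin(\tau_j)=\emptyset$ and is exactly what allows the outer sum to telescope into a single product. No extra work is needed for the $\BS$-action, since the statement is at the level of labeled planar trees.
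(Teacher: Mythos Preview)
Your proof is correct and follows essentially the same approach as the paper: a brief observation for the trivial \conf, then induction on $|\lin(\tau)|$ for $\confm=\cala$, decomposing $\tau=\omega(\tau_1\vee\cdots\vee\tau_\ell)$, regrouping the leaf-sum by branch, and applying the induction hypothesis to each $\tau_i$ before collapsing $\sum_i \svec{\omega}{e_{\{i\}}}$ to $\svec{\omega}{\ast}$. Your treatment of the trivial \conf is slightly more detailed than the paper's (which simply calls it obvious), but the substance is the same.
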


\begin{proof}
It is obvious that Eq.~(\mref{eq:bsuall}) holds when $\confm$ is the trivial \conf. So we only need to prove Eq. (\mref{eq:bsuall}) by induction on $|\lin(\tau)|$ for the \conf $\cala=(A_n)$. When $|\lin(\tau)| =1$, we have
$$
\sum_{x \in \lin(\tau)} \dsu_{x}(\tau) = \tau = \dsu(\tau).
$$
Now assume that Eq. (\ref{eq:bsuall}) holds for all $\tau \in \calt(\genbas)$ with $\lin(\tau) \leq n-1, n > 1,$ and consider an $n$-tree $\tau$ in $\calt(\genbas)$. Since $\tau = \omega(\tau_{1} \vee \tau_{2} \vee \cdots \vee \tau_{\ell})$ for some integer $\ell$ and $\omega \in \genbas(\ell)$, by the definition of the \spl-splitting of a planar tree and the induction hypothesis, we have
\begin{eqnarray*}
\lefteqn{\sum_{x \in \lin(\tau)} \dsu_{x}(\tau)}\\ &=&\sum_{i=1}^{\ell} \sum_{x \in \lin(\tau_{i})} \svec{\omega}{e_{i}}(~\dsu(\tau_{1}) \vee \cdots \vee \dsu(\tau_{i-1})  \vee \dsu_{x}(\tau_{i}) \vee \dsu(\tau_{i+1}) \vee \cdots \vee \dsu(\tau_{\ell}))\\
&=& \sum_{i=1}^{\ell} \svec{\omega}{e_{i}} (~\dsu(\tau_{1}) \vee \cdots \vee \dsu(\tau_{i-1}) \vee \Big(\sum_{x \in \lin(\tau_{i})}\dsu_{x}(\tau_{i})\Big) \vee \dsu(\tau_{i+1}) \vee \cdots \vee \dsu(\tau_{\ell}))\\
&=& \svec{\omega}{\ast}(~\dsu(\tau_{1}) \vee \cdots \vee \dsu(\tau_{i-1}) \vee \dsu(\tau_{i}) \vee \dsu(\tau_{i+1}) \vee \cdots \vee \dsu(\tau_{\ell}))\\
&=& \dsu(\tau).
\end{eqnarray*}
This completes the induction.
\end{proof}

\begin{prop}\mlabel{prop:general}
Let $\confm$ be an $\BS$-invariant \conf of index $m$ (see Eq.~(\mref{eq:ind})).
Then for any labeled planar tree $\tau\in \calt(\genbas)$ with $\lin(\tau)\leq m$, we have
\begin{equation}\mlabel{eq:gall}
\sum_{J\in \confn_{|\lin(\tau)|}} \ssp_J(\tau)=\ssp(\tau).
\end{equation}
Especially, when $m =\infty$ (the power-splitting case), Eq. (\mref{eq:gall}) holds for all labeled planar trees.
\end{prop}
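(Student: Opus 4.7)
The plan is to induct on $n:=|\lin(\tau)|$, following the pattern of Lemma~\mref{lem:all} but tracking how a nonempty $J\subseteq\lin(\tau)$ partitions via the root of $\tau$. The key combinatorial input I would exploit is that, by the definition of the index (Eq.~(\mref{eq:ind})), the hypothesis $|\lin(\tau)|\leq m$ forces $\confn_k=B_k$ for every $k\leq|\lin(\tau)|$; in particular $\confn_{|\lin(\tau)|}$ consists of all nonempty subsets of $\lin(\tau)$, and the same holds for every subtree of $\tau$, so the induction hypothesis remains available at each recursive step. The base case $n=1$ is immediate since $\confn_1=\{[1]\}$ and $\ssp_{[1]}(\tau)=\ssp(\tau)$ by Definition~\mref{de:vtilo}.

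For the inductive step I would decompose $\tau=\omega(\tau_1\vee\cdots\vee\tau_\ell)$ with $\omega\in\genbas_\ell$ and set $L_i:=\lin(\tau_i)$, noting that $\ell\leq n\leq m$ and hence $\confn_\ell=B_\ell$. For any nonempty $J\subseteq\lin(\tau)$, put $I:=J\sqcap\omega=\{i\mid J\cap L_i\neq\emptyset\}\in B_\ell$; Definition~\mref{de:vtilo} together with the convention $\ssp_\emptyset(\tau_i)=\ssp(\tau_i)$ gives
\[
\ssp_J(\tau)=\svec{\omega}{e_I}\Big(\ssp_{J\cap L_1}(\tau_1)\vee\cdots\vee\ssp_{J\cap L_\ell}(\tau_\ell)\Big).
\]
Each such $J$ is uniquely determined by $I$ together with a tuple $(J_i)_{i\in I}$ of nonempty subsets $J_i\subseteq L_i$ (with $J\cap L_i=\emptyset$ whenever $i\notin I$), so I would regroup the sum by $I$ to obtain
\[
\sum_{\emptyset\neq J\subseteq\lin(\tau)}\ssp_J(\tau)=\sum_{\emptyset\neq I\subseteq[\ell]}\svec{\omega}{e_I}\Big(\bigvee_{i=1}^\ell X_i^{(I)}\Big),
\]
where $X_i^{(I)}=\sum_{\emptyset\neq J_i\subseteq L_i}\ssp_{J_i}(\tau_i)$ when $i\in I$ and $X_i^{(I)}=\ssp(\tau_i)$ when $i\notin I$.

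Applying the induction hypothesis to each $\tau_i$ (permitted since $|\lin(\tau_i)|<n\leq m$) will collapse $X_i^{(I)}$ to $\ssp(\tau_i)$ uniformly in $I$, making the inner grafting $\bigvee_i\ssp(\tau_i)$ independent of $I$. Pulling this common factor out of the outer sum and using $\sum_{\emptyset\neq I\subseteq[\ell]}\svec{\omega}{e_I}=\svec{\omega}{\ast_\confm}$ (valid because $\confn_\ell=B_\ell$) then yields $\svec{\omega}{\ast_\confm}\big(\bigvee_i\ssp(\tau_i)\big)=\ssp(\tau)$ by Definition~\mref{defn:vector}. When $m=\infty$ the same argument places no restriction on $|\lin(\tau)|$, so the formula holds for all labeled planar trees. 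The one technical point I expect to need care with is the bookkeeping that lets branches with $J\cap L_i=\emptyset$ contribute $\ssp(\tau_i)$ uniformly with the branches where $J_i$ ranges over nonempty subsets; this is exactly what the convention $\ssp_\emptyset(\tau_i)=\ssp(\tau_i)$ is designed to accommodate, and it is what makes the recursion close cleanly.
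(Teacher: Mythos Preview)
Your proposal is correct and follows essentially the same route as the paper: an induction on $|\lin(\tau)|$, the decomposition $\tau=\omega(\tau_1\vee\cdots\vee\tau_\ell)$, regrouping the sum over $J$ according to $I=J\sqcap\omega$, identifying the fiber over $I$ with $\prod_{i\in I}\{\emptyset\neq J_i\subseteq L_i\}$, and then collapsing each branch via the induction hypothesis before summing $\svec{\omega}{e_I}$ to $\svec{\omega}{\ast_\confm}$. The paper packages the first observation as ``$\ssp_J(\tau)=\tsu_J(\tau)$ for $|\lin(\tau)|\leq m$'' and the regrouping via the surjection $\varphi(J)=J\sqcap\omega$, but the content is identical to what you wrote.
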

\begin{proof}
We prove by induction on $|\lin(\tau)|$ for $|\lin(\tau)| \leq m$. Then $\ssp_{J}(\tau) = \tsu_{J}(\tau)$ and $\ssp(\tau) = \tsu(\tau)$.  When $|\lin(\tau)| =1$, by definition we have
$$
\sum_{x \in \lin(\tau)} \tsu_{x}(\tau) = \tau = \tsu(\tau).
$$
Now assume that Eq. (\ref{eq:gall}) holds for all $\tau \in \calt(\genbas)$ with $\lin(\tau) \leq n-1 $ for an $1< n \leq m$ and consider an $n$-tree $\tau$ in $\calt(\genbas)$. Let $\tau = \omega(\tau_{1} \vee \tau_{2} \vee \cdots \vee \tau_{\ell})$ for some integer $\ell$ and $\omega \in \genbas(\ell)$.
Let $\lin(\tau)=\{1,2,\cdots,n\}$ and $\lin(\tau_{p}) = \{k_{p-1}+1, \cdots, k_{p}\}$ with the convention that $k_{0} =0$ and $k_{\ell} =n$.
Define a map
$$\varphi: B_{|\lin(\tau)|} = \{J ~|~ \emptyset \neq J \subseteq \lin(\tau)\}\longrightarrow B_{\ell}, \quad \varphi(J) = J\sqcap \omega.
$$
Since for each $I = \{i_{1},i_{2},\cdots, i_{t}\}$, the image of $\{k_{i_{1}}, k_{i_{2}}, \cdots, k_{i_{t}}\} \in B_{|\lin(\tau)|}$ under $\varphi$ is $I$, we see that $\varphi$ is surjective and $\bigsqcup_{I \in B_{\ell}} \varphi^{-1} (I) = B_{|\lin(\tau)|}$. Thus

\begin{eqnarray*}
\sum_{J \in B_{|\lin(\tau)|}} \tsu(\tau) &=& \sum_{I \in B_{\ell}}  \sum_{J \in \varphi^{-1}(I)} \svec{\omega}{e_{I}}(~\tsu_{J\cap\lin(\tau_1)}(\tau_{1}) \vee \cdots  \vee \tsu_{J\cap\lin(\tau_\ell)}(\tau_{\ell})~).
\end{eqnarray*}

For a fixed $I = \{i_{1},i_{2},\cdots, i_{t}\}$, we have
\begin{equation}\mlabel{eq:inverse}
\varphi^{-1}(I) = \{J \subseteq \lin(\tau)~|~   J = J_{i_{1}} \sqcup J_{i_{2}} \sqcup \cdots \cup J_{i_{t}}, \emptyset \neq J_{i_{j}} \subseteq \lin(\tau_{i_{j}}), 1 \leq j \leq t\}
\end{equation}
which is in bijection with
$ \{\emptyset\neq J_{i_j}\subseteq \lin(\tau_{i_j})\} \times \cdots \times \{\emptyset\neq J_{i_j}\subseteq \lin(\tau_{i_j})\}.$ Thus by the induction hypothesis, we have
\begin{eqnarray*}
\lefteqn{\sum_{J \in \varphi^{-1}(I)} \svec{\omega}{e_{I}}(~\tsu_{J\cap\lin(\tau_1)}(\tau_{1}) \vee \cdots  \vee \tsu_{J\cap\lin(\tau_\ell)}(\tau_{\ell})~)}\\
&=& \sum_{J \in \varphi^{-1}(I)} \svec{\omega}{e_{I}}(~\tsu(\tau_{1}) \vee \cdots  \vee \tsu(\tau_{i_{1} -1}) \vee \tsu_{J\cap \lin(\tau_{i_1})}(\tau_{i_{1}}) \vee \tsu(\tau_{i_{1}+1}) \vee \cdots \vee \tsu(\tau_{i_{2}-1}) \vee \\
&& \hspace{-1cm}\tsu_{J \cap \lin(\tau_{i_{2}})}(\tau_{i_{2}})  \vee \tsu(\tau_{i_{2}+1}) \vee \cdots \vee   \tsu(\tau_{i_{t}-1}) \vee   \tsu_{J \cap \lin(\tau_{i_{t}})}(\tau_{i_{t}})  \vee \tsu(\tau_{i_{t}+1})\vee\cdots \vee \tsu(\tau_{\ell})~)\\
&=& \svec{\omega}{e_{I}}\left(~
\tsu(\tau_{1}) \vee \cdots  \vee \tsu(\tau_{i_{1} -1}) \vee \sum_{\emptyset \neq J_{j_1}\subseteq \lin(\tau_{i_1})}\tsu_{J_{j_1}}(\tau_{i_{1}}) \vee \tsu(\tau_{i_{1}+1})  \right .\\
&& \qquad  \vee \cdots \vee \tsu(\tau_{i_{2}-1}) \vee
\sum_{\emptyset \neq J_{j_2}\subseteq \lin(\tau_{i_2})}\tsu_{J_{j_2}}(\tau_{i_{2}}) \vee \tsu(\tau_{i_{2}+1}) \\
&&\qquad \left .\vee\cdots  \vee   \tsu(\tau_{i_{t}-1}) \vee  \sum_{\emptyset \neq J_{j_t}\subseteq \lin(\tau_{i_t})}\tsu_{J_{j_t}}(\tau_{i_{t}}) \vee \tsu(\tau_{i_{t}+1}) \vee\cdots \vee \tsu(\tau_{\ell})~\right)\\
&=&\svec{\omega}{e_{I}}(~\tsu(\tau_{1}) \vee \cdots  \vee \tsu(\tau_{\ell})~).
\end{eqnarray*}
Hence
\begin{eqnarray*}
\sum_{J \in B_{|\lin(\tau)|}} \tsu(\tau) &=& \sum_{I \in B_{\ell}} \svec{\omega}{e_{I}}(~\tsu(\tau_{1}) \vee \cdots  \vee \tsu(\tau_{\ell})~) = \tsu(\tau),
\end{eqnarray*}
completing the induction.
\end{proof}

\begin{coro}\mlabel{coro:special}
Let $\opd=\mathcal{T}(\gensp)/ (\relsp)$ be an operad with locally homogeneous relations
\begin{equation*}
R=\{r_s:=\sum_i c_{s,i}\tau_{s,i}, \ c_{s,i}\in\bfk, \tau_{s,i} \in \bigcup_{t\in \mathfrak{R}} t(\genbas), \ 1\leq s\leq k\}.
\end{equation*}
Let $\confm$ be a \conf with index $m$. If $\max \{|\lin(r_{s})|\}_{s} \leq m$, then
\begin{equation*}
\sum_{J\in \confn_{|\lin(r_{s})|}} \ssp_J(r_{s})=\ssp(r_{s}), \quad 1\leq i\leq k.
\end{equation*}
\end{coro}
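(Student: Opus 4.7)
The plan is to reduce the statement directly to Proposition~\mref{prop:general}, using local homogeneity of the defining relations to swap the order of summation and the behavior of $\confm$ below its index.

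First I would unpack the local homogeneity hypothesis on $R$. By definition, all the summands $\tau_{s,i}$ in $r_s = \sum_i c_{s,i}\tau_{s,i}$ share a common leaf label set $\lin(r_s) = \lin(\tau_{s,i})$ for every $i$. In particular $|\lin(\tau_{s,i})| = |\lin(r_s)| \leq m$ for every $i$, so the indexing set $\confn_{|\lin(\tau_{s,i})|}$ appearing in the splitting of each $\tau_{s,i}$ coincides with $\confn_{|\lin(r_s)|}$. Moreover, from Eq.~(\mref{eq:ind}), the index being $m$ means $\confn_n = B_n$ for all $n \leq m$, so under the hypothesis $|\lin(r_s)| \leq m$ we are effectively in the power-splitting regime where Proposition~\mref{prop:general} applies.

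Next I would apply Proposition~\mref{prop:general} termwise. For each fixed $s$ and each $i$, the proposition gives
\begin{equation*}
\sum_{J \in \confn_{|\lin(\tau_{s,i})|}} \ssp_J(\tau_{s,i}) = \ssp(\tau_{s,i}).
\end{equation*}
Multiply both sides by $c_{s,i}$ and sum over $i$. Since $\ssp_J$ and $\ssp$ are defined on $\mathcal{T}(\gensp)$ by extending linearly from trees (Definition~\mref{defn:vector} and Definition~\mref{de:vtilo}), the right hand side sums to $\ssp(r_s)$. On the left hand side, because the indexing set $\confn_{|\lin(\tau_{s,i})|} = \confn_{|\lin(r_s)|}$ is independent of $i$, the double sum over $i$ and $J$ can be interchanged, yielding $\sum_{J \in \confn_{|\lin(r_s)|}} \ssp_J(r_s)$. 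This produces the identity claimed by the corollary.

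The only subtle point, and certainly not a serious obstacle, is that interchange of summation: one must verify that the outer sum over $J$ is indexed by a set depending only on $r_s$ (and not on the individual tree $\tau_{s,i}$), which is exactly what local homogeneity of $r_s$ guarantees. Everything else reduces to linearity and a direct invocation of Proposition~\mref{prop:general}.
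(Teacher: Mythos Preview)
Your proof is correct and matches the paper's approach: the corollary is stated immediately after Proposition~\mref{prop:general} with no separate proof, so it is meant to follow by exactly the linearity-plus-swap argument you give. One minor remark: the claim that ``the index being $m$ means $\confn_n = B_n$ for all $n \leq m$'' is not obviously forced by the $\sup$ in Eq.~(\mref{eq:ind}) alone, but you do not actually use it---Proposition~\mref{prop:general} already delivers the identity for each $\tau_{s,i}$ with $|\lin(\tau_{s,i})| \leq m$, and your argument only invokes that conclusion plus linearity and the common leaf set guaranteed by local homogeneity.
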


The following result gives the precise meaning of splitting an operad, generalizing the splitting property of the associativity in Eq.~(\mref{eq:dslp})~\mcite{Lo2} and the splitting of a binary operad in~\mcite{BBGN}. For an operad $\calq=\calt(W)/(R_\calq)$, let $i_W:W\to \calp(W)$ and $p_W: \calt(W)\to \calq$ denote the natural injection and projection.

\begin{theorem}\mlabel{pp:suast}
Let $\calp = \calt(V) / (R)$ be an operad.
\begin{enumerate}
\item The linear map
\begin{equation}
\alpha_V: V\to \dsu(V), \quad \omega \longmapsto \svec{\omega}{\ast}, \quad \omega \in \genbas(n), n\geq 1, \svec{\omega}{\ast}:=\sum_{i\in [n]} \svec{\omega}{\{e_i\}}, \mlabel{eq:map1}
\end{equation}
induces a unique operad morphism $\alpha_\calp:\calp\longrightarrow \dsu(\calp)$ in the sense that $\alpha_\calp\circ p_V\circ i_V= p_{\dsu(V)}\circ i_{\dsu(V)}\circ \alpha_V$, that is, the following diagram commutes.
$$ \xymatrix{ V \ar^{\alpha_V}[d] \ar^{i_{V}}[rr] && \mathcal{T}(V)  \ar^{p_{V}}[rr] & & \calp
\ar^{\alpha_{\calp}}[d] \\
\dsu(V)\ar^{i_{\dsu(V)}}[rr] && \mathcal{T}(\dsu(V)) \ar^{p_{\dsu(V)}}[rr] & & \dsu(\calp)
}
$$
\mlabel{it:operad1}
\item
Let $\confm$ be a \conf with index $m$ and suppose $\max \{|\lin(r_{s})|\}_{s} \leq m$. Then the linear map
\begin{equation}
\gamma_V: V\to \ssp(V), \quad \omega \longmapsto \svec{\omega}{\star}, \quad \omega \in \genbas(n), n\geq 1, \svec{\omega}{\star}:=\sum_{\emptyset\neq I\subseteq [n]} \svec{\omega}{e_I},
\end{equation}
induces a unique operad morphism $\gamma_\calp:\calp\longrightarrow \ssp(\calp)$ in the above sense.
\mlabel{it:operad2}
\item Let $\confm$ be a \conf with
index $m$ and suppose $\max \{|\lin(r_{s})|\}_{s} \leq m$. Then the linear map
\begin{equation}\mlabel{eq:map3}
\omega \longmapsto \svec{\omega}{e_{[n]}}, \quad \omega \in \genbas(n), n\geq 1,
\end{equation}
induces a unique operad morphism from $\calp$ to $\ssp(\calp)$ in the above sense.
\mlabel{it:operad3}
\end{enumerate}
\end{theorem}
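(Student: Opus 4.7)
The unified strategy is to invoke the universal property of the free operad $\calt(V)$. For each of the three linear maps $f_V\colon V\to \ssp(V)$ described in the theorem, there exists a unique extension to an operad morphism $\tilde f\colon \calt(V)\to \calt(\ssp(V))$; composing with the canonical projection $\calt(\ssp(V))\twoheadrightarrow \ssp(\calp)=\calt(\ssp(V))/(\ssp(R))$ gives a morphism $\calt(V)\to \ssp(\calp)$. By the universal property of $\calp=\calt(V)/(R)$, this descends to the desired $\calp\to \ssp(\calp)$ if and only if $\tilde f(r_s)\in(\ssp(R))$ for every $r_s\in R$, and the resulting morphism is automatically unique. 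In each case the work therefore reduces to identifying $\tilde f(r_s)$ with an explicit combination of generators of the ideal $(\ssp(R))$.

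For part~(\ref{it:operad1}), Definition~\ref{defn:vector} together with the formula $\svec{\omega}{\ast_\cala}=\sum_{i=1}^\ell \svec{\omega}{e_{\{i\}}}$ shows that $\tilde\alpha$ sends a labeled tree $\tau$ to $\dsu(\tau)=\dsu_\emptyset(\tau)$. Lemma~\ref{lem:arity} then yields $\dsu(\tau)=\sum_{J\in A_{|\lin(\tau)|}}\dsu_J(\tau)$, and applying this termwise to each tree appearing in $r_s$ gives $\tilde\alpha(r_s)=\sum_{J}\dsu_J(r_s)$, a sum of generators of $(\dsu(R))$. For part~(\ref{it:operad2}), the hypothesis $\max\{|\lin(r_s)|\}\leq m$ forces every internal vertex $\omega$ of arity $\ell$ appearing in any $r_s$ to satisfy $\ell\leq m$, hence $\confn_\ell=B_\ell$; consequently $\svec{\omega}{\ast_\confm}=\sum_{\emptyset\neq I\subseteq[\ell]}\svec{\omega}{e_I}=\gamma_V(\omega)$, so $\tilde\gamma(r_s)=\ssp(r_s)$. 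Corollary~\ref{coro:special} then expresses $\ssp(r_s)$ as $\sum_{J\in\confn_{|\lin(r_s)|}}\ssp_J(r_s)\in (\ssp(R))$.

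For part~(\ref{it:operad3}), the extension of the map $\omega\mapsto \svec{\omega}{e_{[\ell]}}$ sends a labeled $n$-tree $\tau$ to $\ssp_{[n]}(\tau)$. Indeed, with $J:=[n]=\lin(\tau)$ one has $J\sqcap\omega=\inv(\omega)=[\ell]$ at every internal vertex $\omega$ (since $J$ meets every leaf-set $\lin(\tau_i)$), so the recursion in Definition~\ref{de:vtilo} collapses to replacing each decoration $\omega$ by $\svec{\omega}{e_{[\ell]}}$. Since $|\lin(r_s)|\leq m$ implies $\confn_{|\lin(r_s)|}=B_{|\lin(r_s)|}\ni[|\lin(r_s)|]$, the element $\ssp_{[|\lin(r_s)|]}(r_s)$ is itself one of the generators of $(\ssp(R))$, and the conclusion follows.

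In the symmetric setting, each of the linear maps $\alpha_V$, $\gamma_V$ and $\omega\mapsto\svec{\omega}{e_{[n]}}$ is manifestly $\BS_n$-equivariant for the action on $\ssp(V)$ fixed in Definition~\ref{rule1}: this uses $\sigma([n])=[n]$ and the observation that $\{e_I\mid I\in\confn_n\}$ and $\{e_{\{i\}}\mid i\in[n]\}$ are preserved by $\sigma$ when $\confm$ is $\BS$-invariant. Consequently $\tilde f$ is a morphism of symmetric operads, and since $(\ssp(R))$ is the $\BS$-ideal generated by $\ssp(R)$ the containments above remain valid. The main technical point is the identification in part~(\ref{it:operad3}) of $\tilde f(\tau)$ with $\ssp_{[n]}(\tau)$, which is where one must track the recursion carefully; the rest amounts to combining the universal property with the already-proved summation identities of Lemma~\ref{lem:arity} and Proposition~\ref{prop:general}.
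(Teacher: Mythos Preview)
Your proof is correct and follows essentially the same approach as the paper: lift the linear map on generators to $\calt(V)$ by freeness, then verify that each relation $r_s$ is sent into the ideal $(\ssp(R))$ by identifying the image with $\ssp(r_s)$ (resp.\ $\dsu(r_s)$, $\ssp_{[|\lin(r_s)|]}(r_s)$) and invoking Lemma~\ref{lem:arity} / Corollary~\ref{coro:special}. You supply one detail the paper leaves implicit in part~(\ref{it:operad2}), namely that $\tilde\gamma(\tau_{s,i})=\ssp(\tau_{s,i})$ because every internal vertex of $\tau_{s,i}$ has arity $\ell\leq|\lin(r_s)|\leq m$ and hence $\confn_\ell=B_\ell$; otherwise the arguments coincide.
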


\begin{proof}
We assume that $R$ is given by Eq. (\ref{eq:pres}).

\noindent
(\ref{it:operad1})
By the universal property of the free operad $\mathcal{T}(V)$ on the $\BS$-module $V$, the $\BS$-module morphism
$i_{\dsu(V)}\circ \alpha_V : V \to \mathcal{T}(\dsu(V))$ induces a unique operad morphism
$ \free{\alpha}_V: \mathcal{T}(V)\to \mathcal{T}(\dsu(V))$
such that $\free{\alpha}_V \circ i_{V}= i_{\dsu(V)} \circ \alpha_V$.

By Lemma \ref{lem:all}, Eq. (\ref{eq:bsuall}) holds. Hence we have
$$
\sum_{i} c_{s,i} \dsu(\tau_{s,i})= \sum_{i} \sum_{x \in \lin(\tau_{s,i})} c_{s,i} \dsu_{x}(\tau_{s,i}), 1\leq s\leq k.
$$
Since $L_s:=\lin(\tau_{s,i})$ does not depend on $i$, we have
$$
\sum_{i} c_{s,i} \dsu(\tau_{s,i})=\sum_{x \in L_{s}} \dsu_{x} \left(\sum_{i}c_{s,i}\tau_{s,i}\right)=0,\quad
1\leq s\leq k.
$$
Therefore, $(\dsu(R))  \subseteq \ker (\free{\alpha}_V)$. Thus there is a unique operad morphism $\alpha_{\calp}:\calp =\mathcal{T}(V)/(R) \to \dsu(\calp):=\mathcal{T}(\dsu(V))/(\dsu(R))$ such that $\alpha_{\calp} \circ p_{V} = p_{\dsu(V)} \circ \free{\alpha}_V.$ We then have
$\alpha_{\calp} \circ p_V\circ i_{V} =p_{\dsu(V)}\circ i_{\dsu(V)} \circ \alpha_V$.

Suppose that $\alpha_{\calp}':\calp\to \dsu(\calp)$ is another operad morphism such that $p_{\dsu(V)} \circ i_{\dsu(V)} \circ \alpha_V = p_{\dsu(V)} \circ \free{\alpha}_V \circ i_{V}.$ By the universal property of the free operad $\mathcal{T}(V)$, we obtain
$\alpha_{\calp}' \circ p_{V} = p_{\dsu(V)} \circ \free{\alpha} = \alpha_{\calp} \circ p_{V}.$
Since $p_{V}$ is surjective, we obtain $\alpha_{\calp}'=\alpha_{\calp}.$ This proves the uniqueness of $\alpha_{\calp}.$

\smallskip

\noindent (\mref{it:operad2}) The proof is similar to the proof of Item~(\mref{it:operad1}). The linear map $\gamma:V\to \ssp(V)$ extends uniquely to $\free{\gamma}_V: \calt(V)\to \calt(\ssp(V))$.
By Corollary \mref{coro:special} we have
$$
\sum_{i} c_{s,i} \ssp(\tau_{s,i})= \sum_{i} \sum_{I \in \confn_{|\lin(r_{s})|}} c_{s,i} \ssp_{I}(\tau_{s,i}), 1\leq s\leq k.
$$
Since $L_s:=\lin(\tau_{s,i})$ does not depend on $i$, we have
$$
\sum_{i} c_{s,i} \ssp(\tau_{s,i})=\sum_{I \in L_{s}} \ssp_{I} \left(\sum_{i}c_{s,i}\tau_{s,i}\right)=0,\quad
1\leq s\leq k.
$$
Therefore $(\ssp(R))\subseteq \ker (\free{\gamma}_V)$ and then the rest of the proof follows.
\smallskip

\noindent (\mref{it:operad3})
It is easy to see that the linear map defined in Eq.~(\mref{eq:map3}) is $\BS_n$-equivariant. So it induces a morphism of operads from $\mathcal{T}(\gensp)$ to $\ssp(\calp)$. Moreover, by the definition of \spl-splitting, we have
$$
\sum_{i} c_{s,i}\ssp_{\lin(\tau_{s,i})}(\tau_{s,i})=0,\quad 1\leq s\leq
k.
$$
Note that the labeled tree $\ssp_{\lin(\tau_{s,i})}(\tau_{s,i})$ is obtained by replacing
the label of each vertex of $\tau_{s,i}$, say $\gop\in \genbas(n)$ by $\svec{\gop}{e_{[n]}}$. Hence the conclusion follows.
\end{proof}

\begin{coro}\mlabel{coro:power}
{\rm If the index of $\confm$ is $\infty$ (that is we take the power-splitting), then (\mref{it:operad2}) and (\mref{it:operad3}) hold for any operad.}
\end{coro}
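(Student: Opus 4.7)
The claim is essentially immediate once one unpacks what $p(\confm)=\infty$ means. By Definition~\ref{de:cconf} applied to an index of $\infty$, we have $\confn_n = B_n$ for every $n\geq 1$, i.e.\ every nonempty subset of $[n]$ is admissible. The plan is therefore to observe that the hypothesis ``$\max\{|\lin(r_s)|\}_s \leq m$'' appearing in items (\ref{it:operad2}) and (\ref{it:operad3}) of Theorem~\ref{pp:suast} becomes vacuous in this setting, and then invoke the proofs given there verbatim.

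More precisely, I would proceed as follows. First, note that Proposition~\ref{prop:general} already asserts that when $m=\infty$ the splitting identity $\sum_{J\in \confn_{|\lin(\tau)|}} \ssp_J(\tau) = \ssp(\tau)$ holds for \emph{every} labeled planar tree $\tau\in \calt(\genbas)$, with no constraint on the number of leaves. Consequently, Corollary~\ref{coro:special}, applied to an arbitrary locally homogeneous presentation $R = \{r_s = \sum_i c_{s,i} \tau_{s,i}\}$ of $\calp$, yields
\begin{equation*}
\sum_{J\in \confn_{|\lin(r_s)|}} \ssp_J(r_s) = \ssp(r_s), \qquad 1\leq s\leq k,
\end{equation*}
without needing any bound on $|\lin(r_s)|$.

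With this identity in hand for every relator, the arguments establishing items (\ref{it:operad2}) and (\ref{it:operad3}) of Theorem~\ref{pp:suast} transfer without modification. Namely, the linear maps
\begin{equation*}
\gamma_V : V \to \ssp(V), \quad \omega \mapsto \svec{\omega}{\star} = \sum_{\emptyset\neq I\subseteq [n]} \svec{\omega}{e_I},
\end{equation*}
and
\begin{equation*}
\omega \mapsto \svec{\omega}{e_{[n]}}, \quad \omega\in \genbas(n),
\end{equation*}
each extend by the universal property of the free operad to morphisms $\overline{\gamma}_V, \overline{\beta}_V : \calt(V)\to \calt(\ssp(V))$. One then checks, exactly as in the proof of Theorem~\ref{pp:suast}, that $(\ssp(R)) \subseteq \ker(p_{\ssp(V)} \circ \overline{\gamma}_V)$ using the identity above, and that the relations corresponding to Eq.~(\ref{eq:map3}) are killed by $p_{\ssp(V)} \circ \overline{\beta}_V$ because $\ssp_{\lin(\tau_{s,i})}(\tau_{s,i})$ is obtained by relabeling each vertex $\omega\in \genbas(n)$ of $\tau_{s,i}$ by $\svec{\omega}{e_{[n]}}$. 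The induced operad morphisms $\gamma_\calp, \beta_\calp : \calp \to \ssp(\calp)$ are then unique by the surjectivity of $p_V$, just as before.

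There is no real obstacle here: the corollary is simply a convenient restatement of Theorem~\ref{pp:suast}(\ref{it:operad2})--(\ref{it:operad3}) under the one hypothesis that forces the arity bound to be automatic. The only step that deserves a moment of care is verifying that Proposition~\ref{prop:general} is indeed applicable for every $\tau$ when $m=\infty$; but its statement explicitly covers that case, so no new argument is required.
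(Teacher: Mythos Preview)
Your argument is correct and matches the paper's approach: the corollary is stated there without proof, as an immediate consequence of Theorem~\ref{pp:suast} once one notes that $p(\confm)=\infty$ forces $\confn_n=B_n$ for all $n$, so the arity bound $\max\{|\lin(r_s)|\}_s\leq m$ is vacuously satisfied. One small slip: you write ``$(\ssp(R))\subseteq\ker(p_{\ssp(V)}\circ\overline{\gamma}_V)$'', but $\overline{\gamma}_V$ has domain $\calt(V)$, so what you need (and what your argument actually shows) is $(R)\subseteq\ker(p_{\ssp(V)}\circ\overline{\gamma}_V)$; this mirrors a similar looseness in the paper's own proof of Theorem~\ref{pp:suast}(\ref{it:operad1}).
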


If we take $\calp$ to be the operad of partially or totally associative $3$-algebras, $3$-Lie algebra or generalized Lie algebra of order $3$, then we obtain the following results:
\begin{coro}\label{coro:re}
\begin{enumerate}
\item
Let $(A,\nwarrow, \uparrow, \nearrow)$ be a partially (resp. totally) dendriform $3$-algebra. Then the operation
\begin{equation}
\ast:=\nwarrow + \uparrow + \nearrow
\mlabel{eq:partd}
\end{equation}
makes $A$ into a partially (resp. totally) associative $3$-algebra.
\item Let $(A, \{\cdot,\cdot,\cdot\})$ be a $3$-pre-Lie algebra (resp. generalized pre-Lie algebra of order $3$). Then the operation
\begin{equation}
[x,y,z] := \{x,y,z\} + \{y,z,x\} + \{z,x,y\}
\mlabel{eq:3prelielie}
\end{equation}
gives a $3$-Lie (resp. generalized Lie algebra of order $3$) structure on $A$. The commutator of $3$-pre-Lie algebra (resp. generalized pre-Lie algebra of order $3$) is a 3-Lie algebra (resp. generalized Lie algebra of order $3$) with the structure given by $2[x,y,z]$.
\end{enumerate}
\end{coro}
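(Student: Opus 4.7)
The plan is to recognize this as a direct instance of the abstract splitting morphism from Theorem~\ref{pp:suast}(\ref{it:operad1}). In each case the underlying operad $\calp$ is ternary, so the \conf $\cala$ has index $1$ and the hypotheses of part (\ref{it:operad1}) are automatic; by the examples worked out in Section~\ref{sec:exam}, its arity-splitting $\dsu(\calp)$ is, respectively, the operad of partially dendriform $3$-algebras, totally dendriform $3$-algebras, $3$-pre-Lie algebras, or generalized pre-Lie algebras of order $3$. Theorem~\ref{pp:suast}(\ref{it:operad1}) then provides a canonical operad morphism $\alpha_\calp:\calp\to \dsu(\calp)$ sending the generator $\omega$ to $\svec{\omega}{\ast}=\sum_{i\in[3]}\svec{\omega}{e_i}$; pulling the $\dsu(\calp)$-structure back along $\alpha_\calp$ turns any $\dsu(\calp)$-algebra into a $\calp$-algebra, which is exactly what the corollary asserts.

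For part~(a), the operations $\nwarrow,\uparrow,\nearrow$ were by definition the relabelings $\svec{\omega}{e_1},\svec{\omega}{e_2},\svec{\omega}{e_3}$, so $\alpha_\calp(\omega)=\nwarrow+\uparrow+\nearrow$ and the conclusion $\ast=\nwarrow+\uparrow+\nearrow$ is immediate; no further calculation is needed. For part~(b), the identification $\{\cdot,\cdot,\cdot\}=\svec{\omega}{e_1}$ combined with the symmetric-group relations
\[
\svec{\omega}{e_2}=-\svec{\omega}{e_1}^{(12)},\qquad \svec{\omega}{e_3}=-\svec{\omega}{e_1}^{(13)}
\]
(already recorded in Section~\ref{ss:egs}) gives $\svec{\omega}{e_2}(x,y,z)=-\{y,x,z\}$ and $\svec{\omega}{e_3}(x,y,z)=-\{z,y,x\}$. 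Applying the local skew-symmetry of $\{\cdot,\cdot,\cdot\}$ in the last two slots converts these into $\{y,z,x\}$ and $\{z,x,y\}$, yielding
\[
\alpha_\calp(\omega)(x,y,z)=\{x,y,z\}+\{y,z,x\}+\{z,x,y\}=[x,y,z],
\]
which is the formula in Eq.~(\ref{eq:3prelielie}).

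For the final assertion about the commutator, I would compute the signed sum
\[
C(x,y,z):=\sum_{\sigma\in\BS_3}\mathrm{sgn}(\sigma)\,\{x_{\sigma(1)},x_{\sigma(2)},x_{\sigma(3)}\}
\]
directly, pair the six terms using the skew-symmetry $\{a,b,c\}=-\{a,c,b\}$ to collapse each transposition with the cycle it generates, and observe that the six contributions combine to $2[x,y,z]$. The same bookkeeping works verbatim for the generalized pre-Lie algebra of order $3$, since the only ingredient used is skew-symmetry in the last two arguments.

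The only step that requires any real care is the sign/permutation bookkeeping in part~(b): one must invoke the $\BS_3$-equivariance correctly to reduce $\svec{\omega}{e_2}$ and $\svec{\omega}{e_3}$ to $\{\cdot,\cdot,\cdot\}$ evaluated at permuted inputs, and then apply local skew-symmetry in the right slots. Everything else is formal: once the operad morphism $\alpha_\calp$ of Theorem~\ref{pp:suast}(\ref{it:operad1}) is in place, the corollary follows by unpacking the definitions of $\dsu(\calp)$ for the four operads above.
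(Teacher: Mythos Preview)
Your proposal is correct and follows exactly the route the paper intends: the corollary is stated immediately after Theorem~\ref{pp:suast} with the remark that one simply takes $\calp$ to be the relevant ternary operad, and the paper gives no further proof. Your unpacking of part~(b)---using the $\BS_3$-action to rewrite $\svec{\omega}{e_2},\svec{\omega}{e_3}$ in terms of $\{\cdot,\cdot,\cdot\}$ and then invoking local skew-symmetry---and your direct computation of the commutator as $2[x,y,z]$ fill in details the paper leaves implicit. One minor remark: part~(\ref{it:operad1}) of Theorem~\ref{pp:suast} carries no index hypothesis at all, so your observation that $\cala$ has index~$1$ is not needed there.
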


\subsection{Compatibility of splittings}
We prove the functorial property of taking splittings.
\begin{theorem}
Let $\eta : \calp \longrightarrow \calq$ be an operad morphism. Then $\eta$ induces an operad morphism $\ssp(\eta): \dsu(\calp)\longrightarrow \dsu(\calq)$ such that
\begin{equation}
\eta\circ \alpha_\calp = \dsu(\eta)\circ \alpha_\calq,
\mlabel{eq:func}
\end{equation}
for the maps $\alpha_\calp$ and $\alpha_\calq$ in Theorem~\mref{pp:suast}. \mlabel{thm:inducemor}
\end{theorem}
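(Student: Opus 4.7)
The plan is to build $\dsu(\eta)$ by lifting $\eta$ to the level of free operads, applying $\dsu$ pointwise, and checking the result descends to the splittings. Fix presentations $\calp=\calt(V)/(R_\calp)$ and $\calq=\calt(W)/(R_\calq)$ as in Definition~\mref{rule1}. Since $p_W$ is surjective, choose a linear $\BS$-equivariant lift $\tilde{\eta}:V\to\calt(W)$ of the map $\eta\circ p_V\circ i_V:V\to\calq$; the resulting $\dsu(\eta)$ will in any case be shown independent of this choice.

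Next, define a linear map $\beta:\dsu(V)\to\calt(\dsu(W))$ on the basis in Definition~\mref{defn:vector} by
$$\beta\bigl(\svec{\gop}{e_{\{i\}}}\bigr):=\dsu_{\{i\}}\bigl(\tilde{\eta}(\gop)\bigr),\qquad\gop\in\genbas(n),\ i\in[n],$$
where $\dsu_{\{i\}}$ is extended from labeled trees to $\calt(W)$ by linearity. The $\BS$-equivariance rule $\svec{\gop}{e_I}^\sigma=\svec{\gop^\sigma}{e_{\sigma(I)}}$ in Definition~\mref{rule1} makes $\beta$ an $\BS$-module morphism, and by the universal property of $\calt(\dsu(V))$ it extends uniquely to an operad morphism $\bar{\beta}:\calt(\dsu(V))\to\calt(\dsu(W))$.

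The main obstacle is to show that $p_{\dsu(W)}\circ\bar{\beta}$ annihilates the generating relations $\dsu(R_\calp)$, so that it induces the desired $\dsu(\eta):\dsu(\calp)\to\dsu(\calq)$. The crux is the following substitution lemma: if $\tau\in\calt(W)$ has a distinguished internal vertex $v$, and $\rho=\sum_{\alpha}c_\alpha\sigma_\alpha\in\calt(W)$ is a homogeneous element whose common leaf set matches the incoming edges of $v$, then for any admissible $J\subseteq\lin(\tau\circ_v\rho)$,
$$\dsu_J(\tau\circ_v\rho)\ =\ \widehat{\tau}_{J,v}\ \circ_v\ \dsu_{J'}(\rho),$$
where $J'\subseteq\lin(\rho)$ is the set of positions $\ell$ such that the subtree of $\tau$ grafted at the $\ell$-th leaf of $\rho$ contains an element of $J$, and $\widehat{\tau}_{J,v}\in\calt(\dsu(W))$ depends only on $J$ and $\tau$, not on $\alpha$. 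The \conf axiom in Definition~\mref{de:cconf} guarantees $J'\in\confn_{|\inv(v)|}$ whenever $J\in\confn_{|\lin(\tau\circ_v\rho)|}$. This lemma is proved by induction on the depth of $\tau$ using the recursive Definition~\mref{de:vtilo} and Proposition~\mref{pp:exp}.

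Granted the lemma, for each relation $r_s=\sum_i c_{s,i}\tau_{s,i}\in R_\calp$ and each $J\in\confn_{|\lin(r_s)|}$, the element $\tilde{\eta}(r_s)$ lies in $(R_\calq)$ because $p_W(\tilde{\eta}(r_s))=\eta(p_V(r_s))=0$ in $\calq$. Expanding $\tilde{\eta}(r_s)$ as a sum of expressions of the form $\tau\circ_v r'$ with $r'\in R_\calq$, the lemma gives $\dsu_J(\tilde{\eta}(r_s))\in(\dsu(R_\calq))$, and one checks directly that $\bar{\beta}(\dsu_J(r_s))=\dsu_J(\tilde{\eta}(r_s))$. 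Hence $\bar{\beta}$ descends to $\dsu(\eta):\dsu(\calp)\to\dsu(\calq)$; independence from $\tilde{\eta}$ follows by the same argument applied to the difference of two lifts. Finally, the commutativity $\alpha_\calq\circ\eta=\dsu(\eta)\circ\alpha_\calp$ reduces by universality to $\gop\in V(n)$, where both sides produce $\sum_{i=1}^n\dsu_{\{i\}}(\tilde{\eta}(\gop))=\dsu(\tilde{\eta}(\gop))$ in $\dsu(\calq)$, the equality following from Lemma~\mref{lem:arity}.
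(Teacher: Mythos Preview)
Your argument is essentially correct, but it takes a considerably harder route than the paper. The paper's key simplification is to choose the \emph{maximal} presentations $V=\calp$ and $W=\calq$ (so that $p_V,p_W$ are the canonical surjections $\calt(\calp)\to\calp$, $\calt(\calq)\to\calq$, with $R_\calp$, $R_\calq$ spanning the entire kernels). With this choice $\eta$ itself is already an $\BS$-module map $V\to W$, so one can set $\theta:\dsu(V)\to\dsu(W)$ by $\svec{\omega}{e_I}\mapsto\svec{\eta(\omega)}{e_I}$ and obtain the commutation $\bar\theta(\dsu_J(\tau))=\dsu_J(\bar\eta(\tau))$ immediately from Proposition~\mref{pp:psupath}: both sides just relabel vertices. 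Since $R_\calq$ already spans $\ker p_W$, one has $\bar\eta(R_\calp)\subseteq\mathrm{span}(R_\calq)$ (not merely the ideal), so $\dsu_J(\bar\eta(R_\calp))\subseteq\mathrm{span}(\dsu(R_\calq))$ directly; no substitution lemma is needed.

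Your approach, working with arbitrary presentations, forces the lift $\tilde\eta:V\to\calt(W)$ and the substitution lemma. Both are sound: the lemma is a straightforward consequence of Proposition~\mref{pp:psupath} (the decoration $J\sqcap\nu$ at a vertex $\nu$ inside the substituted $\tilde\eta(\omega)$ equals $(J\sqcap\omega)\sqcap\nu$), and the identity $\bar\beta(\dsu_J(\tau))=\dsu_J(\tilde\eta'(\tau))$ you ``check directly'' is the same lemma applied at every vertex simultaneously, together with Lemma~\mref{lem:arity} for the $\svec{\omega}{\ast}$ vertices. What your route buys is that it works for the presentations actually used to \emph{define} $\dsu(\calp)$ and $\dsu(\calq)$, so it implicitly shows the construction is compatible with the given presentations rather than silently switching to maximal ones. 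One caveat: your $\BS$-equivariant lift $\tilde\eta$ exists because $p_W$ splits $\BS_n$-equivariantly, which uses semisimplicity of $\bfk[\BS_n]$; over a field of characteristic zero this is automatic, but you should note the hypothesis.
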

A similar statement holds for $\tsu(\eta):\tsu(\calp)\longrightarrow \tsu(\calq)$.

\begin{proof}
We take the generating $\BS$-modules $V, W$ of $\calp= \calt(V)/(R_{\calp})$ and $\calq= \calt(W)/(R_{\calq})$ to be the $\BS$-modules of $\calp$ and $\calq$ respectively.
The operad morphism $\eta$ defines a family of $\BS_{n}$-equivalent maps $\tilde{\eta}_{n}: V(n) = \calp(n) \longrightarrow W(n) = \calq(n)$. Define a chain map $\theta: \dsu(V)  \longrightarrow \dsu(W)$ by
\begin{eqnarray*}
\theta_{n}: \dsu(V)(n) &\longrightarrow& \dsu(W)(n)\\
\svec{\omega}{e_{I}}  &\longmapsto& \svec{\eta_{n}(\omega)}{e_{I}}, \quad \omega \in V(n), I \in \confn_{n}.
\end{eqnarray*}
We use the following diagram to keep track of the maps that we will use below.
\begin{eqnarray}\mlabel{diag:combi}
\begin{xy}
(0,15)*+{\dsu(V)}="v1";(50,15)*+{\calt(\dsu(V))}="v2";(100,15)*+{\dsu(\calp)}="v3"; (15,5)*+{V}="v4";(50,5)*+{\calt(V)}="v5";
(85,5)*+{\calp}="v6";(15,-8)*+{W}="v7";(50,-8)*+{\calt(W)}="v8";(85,-8)*+{\calq}="v9";(0,-18)*+{\dsu(W)}="v10";(50,-18)*+{\calt(\dsu(W))}="v11";
(100,-18)*+{\dsu(\calq)}="v12";
{\ar@{->}^{i_{\dsu(V)}}"v1";"v2"};{\ar@{->}^{p_{\dsu(V)}}"v2";"v3"};{\ar@{->}^{\alpha_{V}}"v4";"v1"};{\ar@{->}^{i_{V}}"v4";"v5"};
{\ar@{->}^{p_{V}}"v5";"v6"};{\ar@{->}^{\eta}"v4";"v7"};{\ar@{->}_{\,\free{\alpha}_{V}}"v5";"v2"};
{\ar@{->}^{i_{W}}"v7";"v8"};{\ar@{->}^{p_{W}}"v8";"v9"};{\ar@{->}^{\free{\eta}}"v5";"v8"};{\ar@{->}^{\eta}"v6";"v9"};{\ar@{->}^{\theta}"v1";"v10"};
{\ar@{->}^{i_{\dsu(W)}}"v10";"v11"};{\ar@{->}^{p_{\dsu(W)}}"v11";"v12"};
{\ar@{->}^{\dsu(\eta)}"v3";"v12"};{\ar@{->}_{\alpha_{\calp} }"v6";"v3"};{\ar@{->}^{\alpha_{\calq}\ \ }"v9";"v12"};
{\ar@{->}_{\alpha_{W}}"v7";"v10"};{\ar@/_3pc/^{\free{\theta}}"v2";"v11"};{\ar@{->}^{\free{\alpha}_{W}}"v8";"v11"};
\end{xy}
\end{eqnarray}
It follows from the fact that $\eta$ is an operad morphism and the universal property of $\calt(V)$ that $(\free{\eta} (R_{\calp}))  \subseteq (R_{\calq})$. On the other hand, for any tree $\tau\in \calt(V)$ and $J \in \confn_{|\lin(\tau)|}$, we have $\dsu_{J} (\free{\eta}(\tau)) = \free{\theta} (\dsu_{J}(\tau))$. Therefore, $(\free{\theta} (\dsu(R_{\calp}))) = (\dsu(\free{\eta}(R_{\calp}))) \subseteq (\dsu(R_{\calq}))$. Then there exists a morphism $\dsu(\eta): \dsu(\calp) \longrightarrow \dsu(\calq)$ such that $\dsu(\eta) \circ p_{\dsu(V)} = p_{\dsu(W)} \circ \free{\theta}$. Further, by the universal property of $\calt(V)$, we get the commutativity of the leftmost trapezoid. In summary, all quadrilaterals in the Diagram (\mref{diag:combi}) are commutative except the rightmost trapezoid which is precisely Eq.~(\mref{eq:func}). To prove it, by the surjectivity of $p_V$ and the universal property of $\calt(V)$, we only need to prove
$ \eta\circ \alpha_\calp\circ p_V\circ i_V = \dsu(\eta)\circ \alpha_\calq\circ p_V\circ i_V.$
This follows from a diagram chase by the commutativity of the other quadrilaterals of the Diagram~(\mref{diag:combi}).
\end{proof}

From the morphism in Remark~\mref{rem:paslie}, we obtain
\begin{coro}
We have the following commutative diagram.
\begin{equation}
\xymatrix{
\text{\rm Partially dendriform 3-algebra}
\ar[rr]^{Eq.~(\mref{eq:partd})} \ar[d] && \text{\rm Partially associative 3-algebra} \ar[d]^{Remark~\ref{rem:paslie}} \\
\text{\rm Generalized pre-Lie algebra of order 3} \ar[rr]^{Eq.~(\mref{eq:3prelielie})} && \text{\rm Generalized Lie algebra of order 3}
}
\label{eq:diagram}\end{equation}
where the left vertical map is defined by
\begin{equation}
\{x,y,z\} = \nwarrow(x,y,z) + \uparrow(x,y,z) + \nearrow(x,y,z) - (\nwarrow(x,z,y) + \uparrow(x,z,y) + \nearrow(x,z,y))
\notag \mlabel{eq:pdendprelie}
\end{equation}
for a partially dendriform $3$-algebra $(A,\nwarrow, \uparrow, \nearrow)$.
\end{coro}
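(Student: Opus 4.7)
The plan is first to check that the left vertical arrow is well-defined, i.e., that its image lands in the category of generalized pre-Lie algebras of order $3$, and then to verify commutativity of the diagram by a direct comparison of the two induced $3$-ary brackets.

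For well-definedness, I would rewrite the defining formula using $\ast := \nwarrow + \uparrow + \nearrow$ as
$$\{x,y,z\} = \ast(x,y,z) - \ast(x,z,y).$$
By Corollary~\mref{coro:re}(1), the triple $(A,\ast)$ is a partially associative $3$-algebra, and the above expression is precisely the local commutator of $(A,\ast)$ in the sense of Proposition~\mref{pp:localcom}. Hence the left vertical arrow factors as the top horizontal arrow followed by the local commutator construction of Proposition~\mref{pp:localcom}, which guarantees that $(A,\{\cdot,\cdot,\cdot\})$ is a generalized pre-Lie algebra of order $3$.

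For commutativity, I would compute the two composites explicitly on a partially dendriform $3$-algebra $(A,\nwarrow,\uparrow,\nearrow)$. The top-then-right composite sends $(A,\nwarrow,\uparrow,\nearrow)$ first to $(A,\ast)$ and then, via Remark~\mref{rem:paslie}, to the fully skew-symmetrized bracket
$$[x,y,z]_1 = \sum_{\sigma \in \BS_3} {\rm sgn}(\sigma)\, \ast(x_{\sigma(1)}, x_{\sigma(2)}, x_{\sigma(3)}).$$
The left-then-bottom composite first applies $\{x,y,z\} = \ast(x,y,z) - \ast(x,z,y)$, then applies Eq.~(\mref{eq:3prelielie}) to produce
$$[x,y,z]_2 = \{x,y,z\} + \{y,z,x\} + \{z,x,y\}.$$
Expanding and collecting, both expressions reduce to
$$\ast(x,y,z) + \ast(y,z,x) + \ast(z,x,y) - \ast(x,z,y) - \ast(y,x,z) - \ast(z,y,x),$$
which establishes the claim.

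The entire argument is essentially combinatorial bookkeeping, so no substantive obstacle is expected. The main conceptual observation is the factorization of the left vertical arrow through the partially associative level, which bypasses a direct verification of the axioms of Definition~\mref{defn:gprelie} from the five defining relations of a partially dendriform $3$-algebra and reduces the entire statement to the matching of six signed terms corresponding to the three even cyclic permutations (sign $+1$) and the three transpositions (sign $-1$) of $\BS_3$.
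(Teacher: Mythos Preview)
Your proof is correct, but it proceeds differently from the paper. The paper places this corollary immediately after Theorem~\mref{thm:inducemor} and obtains it as a direct application of that functoriality result to the operad morphism of Remark~\mref{rem:paslie}: once one knows that the arity-splitting $\dsu$ is functorial and intertwines with the maps $\alpha_\calp$, both the well-definedness of the left vertical arrow (as $\dsu$ applied to the commutator morphism) and the commutativity of the square come for free from Eq.~(\mref{eq:func}). Your approach instead bypasses the abstract machinery entirely: you observe that the left vertical arrow factors as the top arrow followed by the local-commutator construction of Proposition~\mref{pp:localcom}, which settles well-definedness, and then you verify commutativity by the elementary six-term identity matching the full $\BS_3$-skew-symmetrization with the cyclic sum of local commutators. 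The paper's route is more conceptual and shows the diagram as one instance of a general pattern; your route is more concrete, requires no operadic functoriality, and makes the commutativity visible at the level of explicit formulas.
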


\delete{
\begin{proof}
This follows from Corollary \ref{coro:re}, Proposition \ref{pp:pasprelie} and Remark~\mref{rem:paslie}.
\end{proof}
}

Diagram (\ref{eq:diagram}) can be regarded as a generalization in the context of $3$-algebras of the diagram \cite{Ch}.
\begin{equation}\notag %\label{eq:diagram2}
\xymatrix{ \mbox{Dendriform algebra} \ar[rr] \ar[d] &&
 \mbox{Associative algebra} \ar[d] \\
  \mbox{Pre-Lie algebra} \ar[rr]
 &&
 \mbox{Lie algebra}
}\end{equation}

\begin{prop}\label{prop:inclusion}
Any $3$-pre-Lie algebra is a generalized pre-Lie algebra of order $3$.
\end{prop}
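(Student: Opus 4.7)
The plan is to deduce this proposition as an immediate consequence of the functoriality of arity-splitting (Theorem~\ref{thm:inducemor}), applied to a known inclusion of $3$-algebra structures at the Lie level. The key observation is that both $\mathit{3PreLie}$ (Definition~\ref{defn:3prelie}) and the generalized pre-Lie operad of order~$3$ (Definition~\ref{defn:gprelie}) are, by construction, the arity-splittings of the $3$-Lie operad and the generalized Lie operad of order~$3$ respectively. Hence any operad morphism between these two ``Lie-type'' operads should descend to their arity-splittings.

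First, I would invoke the classical fact recalled right after Remark~\ref{rem:paslie} (and cited from \cite{BM1}) that every $3$-Lie algebra is a generalized Lie algebra of order~$3$; equivalently, the $10$-term identity defining the generalized Lie operad of order~$3$ is a consequence of the $3$-Jacobi identity. Both operads are generated by a single skew-symmetric trilinear bracket $\omega$, so this inclusion of varieties of algebras corresponds to a surjective operad morphism
\[
\eta:\ \mathit{GenLie3}\ \twoheadrightarrow\ \mathit{3Lie},
\]
which is the identity on $\omega$.

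Next, I would apply the arity-splitting functor. Since $\cala=(A_n)$ is $\BS$-invariant, Theorem~\ref{thm:inducemor} yields an induced operad morphism $\dsu(\eta):\dsu(\mathit{GenLie3})\to \dsu(\mathit{3Lie})$. By the very definitions in Section~\ref{ss:egs} (Definitions~\ref{defn:3prelie} and~\ref{defn:gprelie}), these target and source operads are precisely $\mathit{3PreLie}$ and $\mathit{GenPreLie3}$, so
\[
\dsu(\eta):\ \mathit{GenPreLie3}\ \longrightarrow\ \mathit{3PreLie}.
\]
Moreover, inspecting the construction of $\dsu(\eta)$ in the proof of Theorem~\ref{thm:inducemor} (where the chain map $\theta$ sends $\svec{\omega}{e_I}$ to $\svec{\eta(\omega)}{e_I}$), $\dsu(\eta)$ is the identity on the generator $\svec{\omega}{e_1}=\{\cdot,\cdot,\cdot\}$ of both operads. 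Restriction of algebras along $\dsu(\eta)$ therefore turns every $\mathit{3PreLie}$-algebra into a $\mathit{GenPreLie3}$-algebra, which is exactly the claim.

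The only subtle point, and what I would consider the main obstacle, is verifying that $\dsu$ applied to $\mathit{3Lie}$ and $\mathit{GenLie3}$ indeed produces the operads presented in Definitions~\ref{defn:3prelie} and~\ref{defn:gprelie} with precisely those relations; this is however built into the paper's presentation, since both definitions are explicitly introduced as the arity-splittings of the respective Lie-type structures, with the skew-symmetry relations inherited from the $\BS_n$-action $\svec{\omega}{e_j}^{(1j)}=-\svec{\omega}{e_1}$ used to eliminate $\svec{\omega}{e_2},\svec{\omega}{e_3}$ in favor of $\{\cdot,\cdot,\cdot\}$. A purely elementary alternative would be to expand $\dsu_{x_1}$ of the $10$-term $\mathit{GenLie3}$ identity and check, term by term using the above $\BS_3$-equivariance, that the resulting relation is implied by the two axioms of a $3$-pre-Lie algebra; the functorial route above avoids this lengthy bookkeeping.
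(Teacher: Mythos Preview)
Your proof is correct and follows exactly the paper's own argument: the paper also deduces the result directly from Theorem~\ref{thm:inducemor} together with the fact that every $3$-Lie algebra is a generalized Lie algebra of order $3$. Your write-up simply makes explicit the direction of the operad morphism and the restriction-of-algebras step that the paper leaves implicit.
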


\begin{proof}
This follows from Theorem~\mref{thm:inducemor} since any $3$-Lie algebra is a generalized Lie algebra of order $3$.
\end{proof}

The following results relate different splittings of an operad together.
\begin{prop}\mlabel{pp:rests}
Let $\calp=\mathcal{T}(V)/(R)$ be an operad, $\confm$ and $\confm'$ be two $\BS$-invariant \confs such that $\confm_{n} \subseteq \confm_{n}'$ for each $n$. Then there is a morphism of operads from $\confm'\mathrm{Sp}(\calp)$ to $\confm\mathrm{Sp}(\calp)$ that extends the linear map defined by
\begin{equation}
\svec{\gop}{e_{I}}\mapsto \svec{\gop}{e_{I}},\quad  \svec{\gop}{e_{J}} \mapsto 0,\quad \gop \in \gensp(n), n \geq 2, I \in S'_{n} \cap \confn_{n}, J \in S'_{n} \backslash \confn_{n}. \mlabel{eq:sutsze}
\end{equation}
\end{prop}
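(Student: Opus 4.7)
The strategy is the standard one used in Theorems~\mref{pp:suast} and~\mref{thm:inducemor}: I lift the prescribed map on generators through the universal property of the free operad and then verify it descends to the quotient by the relations. To start, I check that Eq.~(\mref{eq:sutsze}) defines an $\BS$-equivariant linear map $f_V : \confm'\mathrm{Sp}(V) \to \confm\mathrm{Sp}(V)$. Under the $\BS_n$-action $\svec{\gop}{e_I}^\sigma = \svec{\gop^\sigma}{e_{\sigma(I)}}$, the $\BS$-invariance of $\confn$ gives $I \in \confn_n$ iff $\sigma(I) \in \confn_n$, so both branches of the piecewise definition commute with the $\BS_n$-action. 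The universal property of the free operad on an $\BS$-module then yields a unique extension to a free-operad morphism $\bar{f} : \mathcal{T}(\confm'\mathrm{Sp}(V)) \to \mathcal{T}(\confm\mathrm{Sp}(V))$.

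The main task is to show that $\bar{f}$ sends the relation ideal $(\confm'\mathrm{Sp}(R))$ into $(\confm\mathrm{Sp}(R))$, so that $\bar{f}$ descends to the quotients. Since $(\confm'\mathrm{Sp}(R))$ is generated as an operad ideal by the $\confm'$-splittings of the defining relations, it suffices to show that for each $r_s \in R$ and each $J \in \confn'_{|\lin(r_s)|}$, the image under $\bar{f}$ of the $\confm'$-splitting of $r_s$ at $J$ lies in $(\confm\mathrm{Sp}(R))$. I distinguish two cases according to whether $J$ already lies in $\confn_{|\lin(r_s)|}$.

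When $J \in \confn_{|\lin(r_s)|}$, I will show that $\bar{f}$ sends the $\confm'$-splitting of $r_s$ at $J$ to the $\confm$-splitting of $r_s$ at $J$, which lies in $\confm\mathrm{Sp}(R)$ by definition. By Proposition~\mref{pp:exp}, at each vertex $v$ of each tree $\tau$ appearing in $r_s$, the label in the $\confm'$-splitting is either $\svec{\omega_v}{e_{J \sqcap \omega_v}}$ (when $J \sqcap \omega_v \neq \emptyset$) or the sum $\svec{\omega_v}{\ast_{\confm'}} = \sum_{K \in \confn'_{|\inv(\omega_v)|}} \svec{\omega_v}{e_K}$. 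Because $\confn$ is a configuration and $J \in \confn$, every nonempty $J \sqcap \omega_v$ already lies in $\confn_{|\inv(\omega_v)|}$, so $\bar{f}$ preserves the first type of label; the sum label is sent to $\sum_{K \in \confn_{|\inv(\omega_v)|}} \svec{\omega_v}{e_K} = \svec{\omega_v}{\ast_{\confm}}$. Reading the label at each vertex then reproduces exactly the $\confm$-splitting of $\tau$ at $J$.

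When $J \in \confn'_{|\lin(r_s)|} \setminus \confn_{|\lin(r_s)|}$, I argue that in each tree $\tau$ appearing in $r_s$, some vertex $v$ satisfies $J \sqcap \omega_v \in \confn'_{|\inv(\omega_v)|} \setminus \confn_{|\inv(\omega_v)|}$; the label $\svec{\omega_v}{e_{J \sqcap \omega_v}}$ is then sent to zero by $\bar{f}$, and since vertex labels enter as tensor factors in $\mathcal{T}(\confm\mathrm{Sp}(V))$ the entire summand vanishes. Consequently the image of the $\confm'$-splitting of $r_s$ at $J$ is zero, which trivially lies in $(\confm\mathrm{Sp}(R))$. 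The main obstacle is to establish this local-vanishing rigorously, ruling out the a priori possibility that every $J \sqcap \omega_v$ happens to lie in $\confn$; I expect to rule this out using the $\BS$-invariance of $\confn$ and $\confn'$ together with a structural argument on how $J$ is distributed across the vertices of $\tau$. Uniqueness of the induced operad morphism $\confm'\mathrm{Sp}(\calp) \to \confm\mathrm{Sp}(\calp)$ extending Eq.~(\mref{eq:sutsze}) then follows from the surjectivity of the projection $p_{\confm'\mathrm{Sp}(V)}$ and the universal property of the free operad, exactly as in Theorem~\mref{pp:suast}.
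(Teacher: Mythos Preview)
Your overall strategy coincides with the paper's: lift to the free operad via the universal property and then verify that relations map into relations, treating the cases $J\in \confn_{|\lin(r_s)|}$ and $J\notin \confn_{|\lin(r_s)|}$ separately. Your first case is handled correctly and matches the paper. In the second case the paper simply asserts $\varphi(\confm'\mathrm{Sp}_J(\tau_{s,i}))=0$; you aim for the same conclusion by arguing that some vertex label must land in $\confn'\setminus\confn$, and you rightly flag this as the unresolved step.

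That step cannot be completed in the stated generality: the vanishing claim is false. Take $\confm'=\calb$ and let $\confm$ be the index-$2$ configuration of Example~\mref{ex:conf}(b), so $\confn_2=B_2$ while $\confn_3=N_{(3,2)}$. With $\calp=As$ (binary generator $\omega$, associativity relation $r$) and $J=[3]\in B_3\setminus N_{(3,2)}$, each tree in $r$ has only arity-$2$ vertices, and at every vertex $J\sqcap\omega_v\in B_2=\confn_2$; no label is annihilated. One computes
\[
\bar f\big(\confm'\mathrm{Sp}_{[3]}(r)\big)=\svec{\omega}{e_{\{1,2\}}}\circ(\svec{\omega}{e_{\{1,2\}}}\ot\id)-\svec{\omega}{e_{\{1,2\}}}\circ(\id\ot\svec{\omega}{e_{\{1,2\}}}),
\]
which is precisely relation~(\mref{eq:d3}). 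This is not in $(\confm\mathrm{Sp}(R))$: in arity~$3$ that ideal is spanned by the six relations of Eqs.~(\mref{eq:d1})--(\mref{eq:d2}), and the seven tridendriform relations are linearly independent in the free operad (cf.\ \S\mref{sss:dend}, where the paper itself presents $\confm\mathrm{Sp}(As)$ as a splitting strictly between $Dend$ and $TriDend$). Hence the map on generators does not descend, and the proposition as stated fails for this pair $(\confm,\confm')$. Your argument and the paper's do go through when $\confm=\cala$: if $|J|\geq 2$ then, descending along the tree, the elements of $J$ must eventually separate into distinct branches at some vertex $\omega_v$, forcing $|J\sqcap\omega_v|\geq 2$ and hence $J\sqcap\omega_v\notin A_{|\inv(\omega_v)|}$. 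That special case is all that is needed for Corollary~\mref{coro:zero}.
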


\begin{proof}
Let $R$ be given by Eq.~(\ref{eq:pres}).
\smallskip
The linear map defined by Eq.~(\mref{eq:sutsze}) is $\BS_n$-equivariant. Hence it induces a morphism of operads $\varphi: \confm' \mathrm{Sp}(\calp)\rightarrow \confm \mathrm{Sp}(\calp)$, and $\varphi\left(\svec{\gop}{\star}\right)=\svec{\gop}{\ast}$, where $\svec{\gop}{\star} = \sum_{I \in S'_{|\omega|}} \svec{\gop}{e_{I}}$, $\svec{\gop}{\ast} = \sum_{I \in \confn_{|\omega|}} \svec{\gop}{e_{I}}$. Then, we have
$$\varphi(\confm' \mathrm{Sp}_{I}(\tau_{s,i}))=\confm \mathrm{Sp}_{I}(\tau_{s,i}) \ , \ \text{ for all } I \in I \in S'_{n} \cap \confn_{n}$$
and
$$ \varphi(\confm' \mathrm{Sp}_{J}(\tau_{s,i}))=0 \ , \ \text{ for all } J \in S'_{n} \backslash \confn_{n} \ .$$
\end{proof}

If we take $\calp$ to be the operad of partially associative $n$-algebras, then we obtain the following results:
\begin{coro}\label{coro:zero}
\begin{enumerate}
\item
Let $(A,\quad \begin{picture}(5,5)(0,0)
\vector(-1,1){10}
\end{picture}, \begin{picture}(5,5)(0,0)
\vector(0,1){10}
\end{picture}, \begin{picture}(5,5)(0,0)
\vector(1,1){10}
\end{picture}\,\,, \quad \begin{picture}(5,5)(0,0)
\vector(-1,1){10}\vector(0,1){10}
\end{picture},\,\,~~~\begin{picture}(5,5)(0,0)
\vector(-1,1){10}\vector(1,1){10}
\end{picture}\,\,, ~~~~\begin{picture}(5,5)(0,0)
\vector(0,1){10}\vector(1,1){10}
\end{picture}\,\,, \quad \begin{picture}(5,5)(0,0)
\vector(-1,1){10}\vector(0,1){10}\vector(1,1){10}
\end{picture}~~~\,\,)$ be a partially tridendriform $3$-algebra. If the operations $\quad \begin{picture}(5,5)(0,0)
\vector(-1,1){10}\vector(0,1){10}
\end{picture},~~~\,\,\begin{picture}(5,5)(0,0)
\vector(-1,1){10}\vector(1,1){10}
\end{picture}\,\,, ~~~~\begin{picture}(5,5)(0,0)
\vector(0,1){10}\vector(1,1){10}
\end{picture}\,\,, \quad \begin{picture}(5,5)(0,0)
\vector(-1,1){10}\vector(0,1){10}\vector(1,1){10}
\end{picture}~~~\,\,$ are trivial, then $(A,\quad \begin{picture}(5,5)(0,0)
\vector(-1,1){10}
\end{picture}, \begin{picture}(5,5)(0,0)
\vector(0,1){10}
\end{picture}, \begin{picture}(5,5)(0,0)
\vector(1,1){10}
\end{picture}~~~\,\,)$ becomes a partially dendriform $3$-algebra.
\item
Let  $(A,\quad \begin{picture}(5,5)(0,0)
\vector(-1,1){10}
\end{picture}, \begin{picture}(5,5)(0,0)
\vector(0,1){10}
\end{picture}, \begin{picture}(5,5)(0,0)
\vector(1,1){10}
\end{picture}~~~\,\,)$ be a partially dendriform $3$-algebra. Then  $(A,\quad \begin{picture}(5,5)(0,0)
\vector(-1,1){10}
\end{picture}, \begin{picture}(5,5)(0,0)
\vector(0,1){10}
\end{picture}, \begin{picture}(5,5)(0,0)
\vector(1,1){10}
\end{picture}~~~\,\,,0,0,0,0)$ carries a partially tridendriform $3$-algebra structure, where 0 denotes the trivial operation.
\end{enumerate}
\end{coro}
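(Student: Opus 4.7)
The plan is to deduce both parts of Corollary~\mref{coro:zero} from Proposition~\mref{pp:rests} applied with $\calp=\mathit{PAs}_3$, $\confm=\cala$ and $\confm'=\calb$. Since $A_n\subseteq B_n$ for every $n\geq 1$, that proposition yields an operad morphism
$$\varphi\colon \calb\mathrm{Sp}(\mathit{PAs}_3)\longrightarrow \cala\mathrm{Sp}(\mathit{PAs}_3)$$
that is the identity on each singleton generator $\svec{\omega}{e_{\{i\}}}$ ($i=1,2,3$) and zero on the four generators $\svec{\omega}{e_J}$ with $|J|\geq 2$. Under the identifications coming from the earlier examples of Section~\mref{sec:exam}, these seven generators are respectively the operations $\opone,\optwo,\opthree,\opfour,\opfive,\opsix,\opseven$ of a partially tridendriform $3$-algebra, so the last four are precisely the operations required to vanish in part~(a) and to be filled in with zero in part~(b).

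For part~(b), a partially dendriform $3$-algebra is an algebra over $\cala\mathrm{Sp}(\mathit{PAs}_3)$, i.e.\ an operad morphism $\rho\colon \cala\mathrm{Sp}(\mathit{PAs}_3)\to\End(A)$. The composite $\rho\circ\varphi$ is then a partially tridendriform $3$-algebra structure on the same vector space, and by construction each of the four multi-element operations is sent to zero, as required. For part~(a), I would verify directly that the defining relations $\tsu_J(r)=0$ of a partially tridendriform $3$-algebra (where $r$ is the partial associativity of Eq.~(\mref{eq:pas}) and $\emptyset\neq J\subseteq[5]$) collapse, when the four multi-element operations vanish, to the defining relations $\dsu_i(r)=0$ of a partially dendriform $3$-algebra. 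When $J=\{i\}$ is a singleton, the only discrepancy between $\tsu_{\{i\}}(r)$ and $\dsu_i(r)$ is that each unlabelled vertex-sum $\svec{\omega}{\ast_\calb}$ replaces $\svec{\omega}{\ast_\cala}$; under the vanishing hypothesis these two sums coincide in $\End(A)$, so the partially dendriform relation reappears unchanged. For $|J|\geq 2$, a short pigeonhole argument on each of the three five-leaf trees appearing in $r$ shows that either the root or the interior $3$-ary vertex must receive a multi-element label $\svec{\omega}{e_I}$ with $|I|\geq 2$, so each summand vanishes and the relation is automatic.

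The main obstacle is the case analysis in the $|J|\geq 2$ half of part~(a), though it is purely combinatorial and involves only three elementary trees. A cleaner alternative that I would likely use in the actual proof is to observe that $\varphi$ is surjective (its image contains all generators of $\cala\mathrm{Sp}(\mathit{PAs}_3)$) and that $\ker\varphi$ is the operadic ideal generated by the non-singleton basis elements $\svec{\omega}{e_J}$, $|J|\geq 2$; then any structure map $\rho\colon\calb\mathrm{Sp}(\mathit{PAs}_3)\to\End(A)$ that kills these generators automatically annihilates the entire kernel, so factors uniquely through $\varphi$ to yield the desired partially dendriform structure on $A$, bypassing the combinatorial verification altogether.
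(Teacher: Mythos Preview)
Your proposal is correct and follows the same approach as the paper, which simply states the corollary as a specialization of Proposition~\mref{pp:rests} with $\calp=\mathit{PAs}_3$, $\confm=\cala$, $\confm'=\calb$ and provides no further detail. You have actually supplied more justification than the paper does: part~(b) is indeed the immediate pullback along $\varphi$, while for part~(a) the paper leaves implicit precisely the step you flag, namely that a $\calb\mathrm{Sp}(\mathit{PAs}_3)$-structure annihilating the non-singleton generators descends through $\varphi$. Your ``cleaner alternative'' (surjectivity of $\varphi$ together with $\ker\varphi$ being the ideal generated by the non-singleton $\svec{\omega}{e_J}$) is the right way to make this rigorous and avoids the tree-by-tree case analysis.
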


\section{Splittings of operads, Rota-Baxter operators on operads and relative Rota-Baxter operators}
\mlabel{sec:rb}

In this section we establish the relationship between
the \spl-splitting of an operad on one hand and the actions of a Rota-Baxter operator on the operad on the other.
For this purpose, we generalize the concept of a Rota-Baxter operator~\mcite{Ba,CK,Gub,R1} from binary operads to general operads. By generalizing the concept of a relative Rota-Baxter operator (previously called an $\calo$-operator) from the binary case to the general case, we further show that any \spl-splitting of an operad can be recovered, on the level of algebras for an operad, by a relative Rota-Baxter operator.

\subsection{Splittings and Rota-Baxter operators on operads}
We define the Rota-Baxter operator on an operad, together with a \conf. As preparation, we first consider it on the level of algebras.

\begin{defn}
{\rm
Let $n\geq 1$ and let $\confn$ be an $\BS_n$-invariant subset of $B_{n}$ (=$\{\emptyset \neq J\subseteq [n]\}$).
Let $(A,\langle, \cdots,\rangle)$ be an {\bf $n$}-algebra consisting of a module $A$ over a commutative ring $\bfk$ and an $n$-ary operation
$$
\langle , \cdots , \rangle :  A^{\ot n} \longrightarrow A.
$$
A {\bf $C$-Rota-Baxter operator of weight $\lambda$} on $(A,\langle , \cdots , \rangle)$ is a linear map $P: A \longrightarrow A$ such that
\begin{equation}
\langle P(x_{1}), \cdots, P(x_{n}) \rangle = P \left( \sum_{I \in \confn}  \lambda^{|I| -1}   \langle \bar{P}(x_{1}), \cdots , \bar{P}(x_{i}), \cdots, \bar{P}(x_{n}))  \rangle\right),
\end{equation}
where $\bar{P}(x_{i}) =\left\{ \begin{array}{ll} x_{i} & i \in I\\ P(x_{i}) &i \notin I \end{array} \right.$ for all $x_{1},x_{2},\cdots,x_{n} \in A$. Then $A$ is called a {\bf $C$-Rota-Baxter $n$-algebra of weight $\lambda$}.
}
\end{defn}

\begin{remark}
{\rm
For any $n$-ary algebra, when $\confn = A_{n}(=\{i\in [n]\})$, a $C$-Rota-Baxter operator is just an usual Rota-Baxter operator of weight zero;
when $\confn = B_{n}$, a $C$-Rota-Baxter operator is just the usual Rota-Baxter operator of weight $\lambda$ ~\mcite{BGLW}.
}
\end{remark}

We next consider the action of Rota-Baxter operators on the level of operads.
\begin{defn}\mlabel{defn:av}
{\rm Let $\gensp$ be an $\BS$-module with $V(1) = \bfk\,\id$ and $\confm$ be an $\BS$-invariant \conf.
\begin{enumerate}
\item
Let $\bvp$ denote the $\BS$-module with $\bvp(1)=\bfk\,P, P\neq \id,$ and $\bvp(n) = V(n), n \geq 2$, where $P$ is a symbol. Let $\mathcal{T}(\bvp)$ be the free operad generated by $\bvp$.
\item
Define an $\BS$-module $\ssp(V)$ with $\ssp(\gensp)(n)=\gensp(n) \ot \left(\bigoplus_{I\in \confn_{n}}   \bfk e_{I}\right)$ as in Eq.~(\mref{eq:dsp}). Let $P^{\ot n,I}$ denote the $n$-th tensor power of $P$ but with the component from $I$ replaced by the identity map.
\delete{
So, for example, for the three inputs $x_1, x_2$ and $x_{3}$ of $P^{\ot 3}$, we have $P^{\ot 3, \{x_1\}}= {\rm id} \ot P \ot P$ and $P^{\ot 3, \{x_1,x_2\}}={\rm id}\ot {\rm id} \ot P$.
}
Define a linear map of graded vector spaces from $\ssp(V)$ to $\bvp$ by the correspondence:
$$
\xi:\quad \svec{\gop}{e_{I}} \mapsto \omega \circ P^{\ot n,I},\quad  \text{ for all } \gop\in \gensp (n),\quad  I \in \confn_{n},
$$
where $\circ$ is the operadic composition. By the universal property of the free operad, $\xi$ induces a homomorphism of operads that we still denote by $\xi$:
$$\xi:\mathcal{T}(\ssp(V))\to \mathcal{T}(\bvp).$$
\item
Let $\calp=\mathcal{T}(\gensp)/(R_\calp)$ be an operad defined by the $\BS$-module $\gensp$ and
relations $R_\calp$.
Let
\begin{equation}
\srb_{\mathcal{P}}(n):= \left\{\gop \circ P^{\ot n} - \sum_{I \in \confn_{n}} P \circ \omega \circ P^{\ot n,I}~\Big|~ \gop \in \gensp(n)\right\}.
\mlabel{eq:rbop}
\end{equation}
Define the {\bf operad of \spl-Rota-Baxter $\calp$-algebras}
by
$$\srb(\calp):=\mathcal{T}(\bvp)/( {\mathrm R}_\calp,\srb_{\calp}).$$
\end{enumerate}
}
\end{defn}

We first prove a lemma relating \spl-splitting and \spl-Rota-Baxter operators of weight one.

\begin{lemma}
Let $\calp=\mathcal{T}(\gensp)/(R_\calp)$ be an operad and $\confm$ be an $\BS$-invariant \conf.
Let $\tau\in \calt(\gensp)$ with $\lin(\tau)=n$.
\begin{enumerate}
\item
We have
\begin{equation}
P \circ \xi (~\ssp(\tau)~) \equiv \tau \circ P^{\ot n} \mod ({\mathrm R}_\calp, \srb_\calp).
\mlabel{eq:tpxi}
\end{equation}
\mlabel{it:tpxi}
\item
For $ J\in \confn_{n}$, we have
\begin{equation}
\xi(\ssp_{J}(\tau)) \equiv \tau \circ (P^{\ot n, J}) \mod ({\mathrm R}_\calp, \srb_\calp)\,.
\mlabel{eq:txidu}
\end{equation}
\mlabel{it:txidu}
\end{enumerate}
\mlabel{lem:txidu}
\end{lemma}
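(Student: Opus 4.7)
I plan to prove (\mref{eq:tpxi}) and (\mref{eq:txidu}) by induction on $n=|\lin(\tau)|$, handling (\mref{eq:tpxi}) first at each stage since its proof requires only the previous instance of itself, whereas (\mref{eq:txidu}) will use both. The base case $n=1$ is the trivial tree, where $\ssp(\tau)$ and $\ssp_{\{1\}}(\tau)$ coincide with $\tau$, $\xi$ sends it to the operadic unit, and both identities reduce to tautologies.

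For the inductive step, write $\tau=\gop(\tau_1\vee\cdots\vee\tau_\ell)$ with $\gop\in\genbas(\ell)$ and $n_i=|\lin(\tau_i)|$. To prove (\mref{eq:tpxi}), unwind the recursive definition of $\ssp$ to obtain
\[
\xi(\ssp(\tau))=\sum_{I\in\confn_\ell}(\gop\circ P^{\ot\ell,I})\circ(\xi(\ssp(\tau_1))\vee\cdots\vee\xi(\ssp(\tau_\ell))).
\]
Composing on the left with $P$ and invoking the Rota-Baxter relation (\mref{eq:rbop}), the sum $\sum_{I}P\circ\gop\circ P^{\ot\ell,I}$ collapses to $\gop\circ P^{\ot\ell}$ modulo $\srb_\calp$. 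The $\ell$ copies of $P$ slide onto the inputs, so that slot $i$ becomes $P\circ\xi(\ssp(\tau_i))$; the inductive hypothesis rewrites this as $\tau_i\circ P^{\ot n_i}$ for each $i$, and reassembling at $\gop$ yields $\tau\circ P^{\ot n}$.

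For (\mref{eq:txidu}), set $I_0=J\sqcap\gop\in\confn_\ell$ so that by Definition~\mref{de:vtilo}, $\ssp_J(\tau)=\svec{\gop}{e_{I_0}}(\sigma_1\vee\cdots\vee\sigma_\ell)$, where $\sigma_i=\ssp_{J\cap\lin(\tau_i)}(\tau_i)$ for $i\in I_0$ and $\sigma_i=\ssp(\tau_i)$ for $i\notin I_0$. Applying $\xi$, slot $i$ of $\gop$ receives $\xi(\sigma_i)$ when $i\in I_0$ but $P\circ\xi(\sigma_i)$ when $i\notin I_0$. In the first case, the inductive hypothesis of (\mref{eq:txidu}) gives $\xi(\sigma_i)\equiv\tau_i\circ P^{\ot n_i,J\cap\lin(\tau_i)}$; in the second, the already-established (\mref{eq:tpxi}) gives $P\circ\xi(\sigma_i)\equiv\tau_i\circ P^{\ot n_i}=\tau_i\circ P^{\ot n_i,J\cap\lin(\tau_i)}$, where the last equality holds because $J\cap\lin(\tau_i)=\emptyset$ precisely when $i\notin I_0$. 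Reassembling at $\gop$ yields $\tau\circ P^{\ot n,J}$.

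The main obstacle is organizational rather than computational: the recursion defining $\ssp_J$ treats indices inside and outside $I_0$ asymmetrically, while the Rota-Baxter relation produces a uniform pattern of $P$'s. The reconciliation requires running (\mref{eq:tpxi}) and (\mref{eq:txidu}) in tandem on subtrees, together with the closure property built into Definition~\mref{de:conf} which ensures that $J\cap\lin(\tau_i)$ is an admissible configuration subset in $\confn_{n_i}$ whenever nonempty, so that the inductive hypothesis of (\mref{eq:txidu}) applies to each $\tau_i$ with $i\in I_0$.
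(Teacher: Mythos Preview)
Your proposal is correct and follows essentially the same approach as the paper: both parts are proved by induction on $|\lin(\tau)|$, with the inductive step for (\mref{eq:tpxi}) using the Rota-Baxter relation to collapse the sum over $I\in\confn_\ell$ and then the induction hypothesis on each branch, and the inductive step for (\mref{eq:txidu}) splitting into the cases $i\in I_0$ (induction hypothesis of (\mref{eq:txidu})) and $i\notin I_0$ (the already established (\mref{eq:tpxi}) on the subtree). The only cosmetic difference is that the paper runs the two inductions sequentially rather than simultaneously, but since (\mref{eq:tpxi}) is self-contained and only its instances on strictly smaller trees are needed in the proof of (\mref{eq:txidu}), the two organizations are equivalent.
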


\begin{proof}
(\mref{it:tpxi}).
We prove by induction on $|\lin(\tau)|\geq 1$. When $|\lin(\tau)|=1$, $\tau$ is the tree with one leaf standing for the identity map. Then we have $\xi(~ \ssp(\tau)~)= \tau$ and $P \circ \xi(~\ssp(\tau)~)= P = \tau \circ P$, as needed. Assume that the claim has been proved for $\tau$ with $|\lin(\tau)| \leq k$ and consider a $\tau$ with $|\lin(\tau)|=k+1$. Then from the decomposition $\tau= \omega (\tau_{1} \vee \tau_{2} \vee \cdots \vee \tau_{\ell})$, we have
$\ssp(\tau)= \svec{\gop}{\star}(\ssp(\tau_{1}) \vee \ssp(\tau_{2}) \vee \cdots \vee \ssp(\tau_{\ell}))$ where $\svec{\gop}{\star}=\sum\limits_{I\in \confn_\ell} \svec{\gop}{I}$. Thus we have
\begin{eqnarray*}
P \circ \xi(~\ssp(\tau)~) &=& P \circ \xi \left(\svec{\gop}{\star}(~\ssp(\tau_{1}) \vee \ssp(\tau_{2}) \vee \cdots \ssp(\tau_{\ell})~)\right) \\
&=& P \circ \xi \left(\sum_{I \in \confn_{\ell}} \svec{\gop}{e_{I}}   (~\ssp(\tau_{1}) \vee \ssp(\tau_{2}) \vee \cdots \vee \ssp(\tau_{\ell})~)\right) \\
&=& P \circ \omega \circ \sum_{I \in \confn_{\ell}} (P^{\ot n, I} \circ (\xi(~\ssp(\tau_{1})) \ot \xi(~\ssp(\tau_{2})) \ot \cdots \ot \xi(~\ssp(\tau_{\ell}))))\\
&\equiv&  \omega \circ ((P \circ \xi(~\ssp(\tau_{1})))\ot (P \circ \xi(~\ssp(\tau_{2}))) \ot \cdots \ot (P \circ \xi(~\ssp(\tau_{\ell}))))  \\
&& \mod ({\mathrm R}_\calp, \srb_\calp) \qquad \text{(by Eq.~(\mref{eq:rbop})}\\
&\equiv& \omega \circ ((\tau_{1} \circ P^{\otimes |\lin(\tau_{1})|})  \otimes (\tau_{2} \circ P^{\otimes |\lin(\tau_{2})|}) \ot \cdots \ot (\tau_{\ell} \circ P^{\otimes |\lin(\tau_{\ell})|})) \\
&& \qquad ~~ \text{(by induction hypothesis)}\\
&=& \omega \circ (\tau_{1} \otimes \tau_{2} \ot \cdots \ot \tau_{\ell}) \circ P^{\otimes (k+1)}\\
&=&\omega \circ (\tau_{1} \vee \tau_{2} \vee \cdots \tau_{\ell}) \circ P^{\otimes (k+1)}\\
&=& \tau \circ P^{\otimes (k+1)},
\end{eqnarray*}
completing the induction.

\noindent
(\mref{it:txidu}). We again prove by induction on $|\lin(\tau)|$. If $|\lin(\tau)|=1$, then $1$ is the only leaf label of $\tau$ and $\{1\} \in \confn_{1}$. Thus we have
$ \xi(\ssp_{\{1\}}(\tau)) = \id = \tau \circ (P^{\ot 1, \{1\}})$, as needed.
Assume that the claim has been proved for all $\tau$ with $|\lin(\tau)|\leq k$ and consider a $\tau$ with $|\lin(\tau)|=k+1$. Write $\tau= \omega (\tau_{1} \vee \tau_{2} \vee \cdots \vee \tau_{\ell})$. Let $J \in \confn_{k+1}$ and $
I =\{i_{1},i_{2},\cdots, i_{s}\} = \{i~|~ 1 \leq i  \leq \ell, J \cap \lin(\tau_{i}) \neq \emptyset \}.$ It follows from the definition of the \conf with index $m$ that $I \in \confn_{\ell}$ and $J \cap \lin(\tau_{i})  \in  \confn_{|\lin(\tau_{i})|}$. Then we have
{\allowdisplaybreaks
\begin{eqnarray*}
\xi(\ssp_{J}(\tau))&=& \xi\left(\ssvec{\gop}{e_{I}} (\vee_{i=1}^\ell \ssp_{J \cap \lin(\tau_{i})}(\tau_{i}) )\right)\\
&=& \gop \circ \left(
\bigoplus_{i=1}^\ell (\tau_{i} \circ P^{\ot |\lin(\tau_{i})|, J \cap \lin(\tau_{i}) }) \right) \\
&&  \quad \quad \quad \text{(by induction hypothesis and Item~(\mref{it:tpxi}))} \\
&\equiv & \gop \circ (\tau_{1} \vee \cdots \vee \tau_{\ell}) \circ P^{|\lin(\tau)|, J} \mod ( {\mathrm R}_\calp,\srb_\calp)\\
&=& \tau \circ P^{\ot (k+1),J}.
\end{eqnarray*}
}
This completes the induction.
\end{proof}

The next result establishes the link between Rota-Baxter operator and splitting that unifies the previous known results~\mcite{Ag2,AL,BBGN,BGN1,BLN,E,HNB,Le3,LNB,NB} in this direction.

\begin{theorem}\label{thm:triav}
\begin{enumerate}
\item Let $\calp$ be an operad and $\confm$ be an $\BS$-invariant \conf. There is a morphism of operads
$$
\ssp(\calp) \longrightarrow \srb(\mathcal{P}),
$$
which extends the map $\xi$ given in  Definition~\ref{defn:av}.
\item Let $A$ be a $\mathcal{P}$-algebra. Let $P:A\to A$ be a \spl-Rota-Baxter operator. Then the following operations make $A$ into a
$\ssp(\opd)$-algebra:
$$\svec{\omega}{e_{I}}(x_{1},x_{2},\cdots,x_{n}):= \omega \circ P^{\ot, I} (x_{1},x_{2},\cdots,x_{n}), \quad \text{ for all } x_{1},x_{2},\cdots, x_{n}\in A, \omega \in \mathcal{P}(n), I \in \confn_{n}.$$
\end{enumerate}
\mlabel{thm:biav}
\end{theorem}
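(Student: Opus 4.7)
The plan is to derive part (1) directly from Lemma~\mref{lem:txidu}, and then obtain part (2) as a structural consequence of part (1).

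For part (1), recall from Definition~\mref{defn:av} that the universal property of the free operad already furnishes an operad morphism $\xi \colon \mathcal{T}(\ssp(V)) \to \mathcal{T}(\bvp)$. To descend $\xi$ to a morphism between the quotients $\ssp(\calp) = \mathcal{T}(\ssp(V))/(\ssp(R_\calp))$ and $\srb(\calp) = \mathcal{T}(\bvp)/({\mathrm R}_\calp, \srb_\calp)$, it suffices to show that $\xi$ sends each generator of $(\ssp(R_\calp))$ into $({\mathrm R}_\calp, \srb_\calp)$. By Definition~\mref{rule1}, these generators (as an $\BS$-module) are the expressions $\ssp_J(r_s) = \sum_i c_{s,i}\, \ssp_J(\tau_{s,i})$ for $r_s \in R_\calp$ and $J \in \confn_{n_s}$, where $n_s := |\lin(\tau_{s,i})|$ is well defined by local homogeneity. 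Applying Lemma~\mref{lem:txidu}(\mref{it:txidu}) termwise gives
\[ \xi\bigl(\ssp_J(r_s)\bigr) = \sum_i c_{s,i}\, \xi\bigl(\ssp_J(\tau_{s,i})\bigr) \equiv \Bigl(\sum_i c_{s,i}\, \tau_{s,i}\Bigr) \circ P^{\ot n_s, J} = r_s \circ P^{\ot n_s, J} \pmod{({\mathrm R}_\calp, \srb_\calp)}. \]
Since $r_s \in R_\calp$, the right-hand side lies in $({\mathrm R}_\calp) \subseteq ({\mathrm R}_\calp, \srb_\calp)$. The $\BS$-equivariance of $\xi$, built into the assignment $\svec{\gop}{e_I} \mapsto \gop \circ P^{\ot n, I}$, ensures that the entire $\BS$-submodule $(\ssp(R_\calp))$ maps into $({\mathrm R}_\calp, \srb_\calp)$, so a well-defined morphism $\ssp(\calp) \to \srb(\calp)$ is induced, which by construction extends $\xi$.

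For part (2), observe that a $\calp$-algebra $A$ together with a $\confm$-Rota-Baxter operator $P$ of weight one on $A$ is precisely an algebra over $\srb(\calp)$: the $\calp$-algebra structure makes $A$ satisfy the relations in ${\mathrm R}_\calp$, and the defining Rota-Baxter identity is exactly the relation $\srb_\calp$ from Eq.~(\mref{eq:rbop}) evaluated on $A$, with $P$ acting as the generator also denoted $P$. Pulling back this $\srb(\calp)$-algebra structure along the operad morphism from part (1) endows $A$ with a $\ssp(\calp)$-algebra structure, and tracing $\xi$ on generators shows that $\svec{\omega}{e_I}$ acts on $A$ as $\omega \circ P^{\ot n, I}$, matching the formula in the statement.

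The bulk of the work has already been done in Lemma~\mref{lem:txidu}, so the remaining argument is largely a bookkeeping exercise. The main subtle point is verifying that $\xi$ respects the $\BS$-module closure used in generating $(\ssp(R_\calp))$ in the symmetric case, but this is immediate from the $\BS_n$-equivariance of the assignment defining $\xi$ together with the hypothesis that $\confm$ is $\BS$-invariant. With these pieces aligned, both items of the theorem follow without further computation.
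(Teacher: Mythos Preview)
Your proof is correct and follows essentially the same approach as the paper: both arguments apply Lemma~\mref{lem:txidu}(\mref{it:txidu}) termwise to the generators $\ssp_J(r_s)$ to show that $\xi$ maps $(\ssp(R_\calp))$ into $({\mathrm R}_\calp, \srb_\calp)$, and both obtain part~(2) as the algebra-level interpretation of the operad morphism in part~(1). Your additional remark on $\BS$-equivariance is a helpful clarification that the paper leaves implicit.
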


\begin{proof}
The second statement is just the interpretation of the first statement on the level of algebras. So we just need to prove the first statement. Let $R_{\ssp(\calp)}$ be the relation space of $\ssp(\calp)$. By definition, the relations of $\ssp(\calp)$ are generated by $\ssp_J(r)$ for locally homogeneous $r=\sum_i c_i \tau_i \in R_\calp$ and $J \in \confn_{|\lin(\tau_{i})|}$.
By Eqs.(\mref{eq:tpxi}) and (\mref{eq:txidu}), we have
$$
\xi \left (\sum_{i} c_i \ssp_{J}(\tau_{i})\right) = \sum_i c_i \xi(\ssp_{J}(\tau_{i})) \equiv \sum_i c_i \tau_i \circ P^{\ot n,J}
 \equiv \left (\sum_i c_i \tau_i \right) \circ P^{\ot n, J} \mod ({\mathrm R}_\calp, \srb_\calp).
$$
Hence $\xi(R_{\ssp(\calp)}) \subseteq ({\mathrm R}_\calp, \srb_\calp)$ and $\xi$ induces a morphism of operads
$$
\bar{\xi}: \ssp(\calp) \longrightarrow \srb(\calp).
$$
This proves the first statement.
\end{proof}

When we take $\mathcal{P}$ to be the operad of the 3-associative algebra or 3-Lie algebra, we obtain the following results.

\begin{coro}
{\rm
\begin{enumerate}
\item
Let $(A, (\cdot, \cdot, \cdot), P)$ be a Rota-Baxter totally (resp. partially) associative $3$-algebra of weight zero. Define three new operations on $A$ by
$$
\nwarrow(x,y,z)= (x, P(y),P(z)), \quad \uparrow(x,y,z) = (P(x),y,P(z)), \quad \nearrow(x,y,z) = (P(x),P(y),z).
$$
Then ~$(A, (\cdot, \cdot, \cdot))$ is a totally (resp. partially) dendriform $3$-algebra.
\item
Let $(L, [\cdot, \cdot, \cdot], P)$ be a Rota-Baxter 3-Lie algebra (resp. generalized Lie algebra of order $3$) of weight zero. Define a new operation on $L$ by
$$
\{x, y, z\} = [x, P(y),P(z)].
$$
Then ~$(L, \{\cdot, \cdot, \cdot\})$ is a $3$-pre-Lie algebra (resp. generalized pre-Lie algebra of order $3$).
\end{enumerate}
\mlabel{coro:RB}
}
\end{coro}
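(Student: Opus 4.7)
The plan is to recognize both parts of the corollary as direct specializations of Theorem~\mref{thm:biav} at the arity-splitting \conf $\confm = \cala$. First I would translate the hypothesis: because every $I \in A_n$ is a singleton, the defining identity of a $\confm$-Rota-Baxter operator collapses at $\lambda = 0$ to
\begin{equation*}
\langle P(x_1),\dots,P(x_n)\rangle \;=\; P\!\left(\sum_{i=1}^{n}\langle P(x_1),\dots,x_i,\dots,P(x_n)\rangle\right),
\end{equation*}
which is exactly the classical Rota-Baxter identity of weight zero on an $n$-algebra. Hence the hypothesis in the corollary is precisely that $P$ is an $\cala$-Rota-Baxter operator of weight zero on the ambient operad $\calp$.

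Next I would invoke the identifications already established in the preceding sections: by Section~\mref{ss:egns} the arity-splitting $\dsu$ of the totally (resp.\ partially) associative $3$-algebra operad is the totally (resp.\ partially) dendriform $3$-algebra operad, and by Definitions~\mref{defn:3prelie} and~\mref{defn:gprelie}, $\dsu$ of the $3$-Lie (resp.\ generalized Lie algebra of order $3$) operad is the $3$-pre-Lie (resp.\ generalized pre-Lie algebra of order $3$) operad. Hence Theorem~\mref{thm:biav}(2) automatically produces a $\dsu(\calp)$-algebra structure on $A$ (resp.\ $L$) via $\svec{\omega}{e_I}(x_1,\dots,x_n) = \omega \circ P^{\ot n,I}(x_1,\dots,x_n)$. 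For part~(1), taking $\omega = (\cdot,\cdot,\cdot)$ and $I = \{1\},\{2\},\{3\}$ respectively and unwinding the composition gives exactly the three operations $\nwarrow$, $\uparrow$, $\nearrow$ named in the statement, so part~(1) is immediate.

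The only delicate point is part~(2), where Theorem~\mref{thm:biav} a priori produces three generators $\svec{\omega}{e_{\{i\}}}$ while the corollary lists a single ternary operation $\{\cdot,\cdot,\cdot\}$. This is resolved by the symmetric-operad action on the splitting, $\svec{\omega}{e_I}^{\sigma} = \svec{\omega^{\sigma}}{e_{\sigma(I)}}$: since $\omega = [\cdot,\cdot,\cdot]$ is totally skew-symmetric, $\svec{\omega}{e_{\{2\}}}$ and $\svec{\omega}{e_{\{3\}}}$ are obtained from $\svec{\omega}{e_{\{1\}}}$ by transpositions up to sign, so as generators of the \emph{symmetric} operad $\dsu(3\text{-}Lie)$ they are identified with $\svec{\omega}{e_{\{1\}}}$ up to the $\BS_3$-action. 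Thus a single operation
\begin{equation*}
\{x,y,z\} \;:=\; \svec{\omega}{e_{\{1\}}}(x,y,z) \;=\; [x,P(y),P(z)]
\end{equation*}
already generates the entire $\dsu(3\text{-}Lie)$-structure, matching Definition~\mref{defn:3prelie}; the same argument applies verbatim to the generalized Lie algebra of order~$3$ (Definition~\mref{defn:gprelie}). Apart from this bookkeeping, no real obstacle arises --- the corollary is essentially an unpacking of Theorem~\mref{thm:biav} in the specified arity.
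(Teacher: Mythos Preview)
Your proposal is correct and follows exactly the paper's approach: the corollary is stated immediately after Theorem~\mref{thm:biav} and is obtained simply by specializing that theorem to the operads $\mathit{TAs}_3$, $\mathit{PAs}_3$, $3$-$Lie$, and $\mathit{GLie}$-$3$ with the \conf $\cala$. Your additional remarks---that the $\cala$-Rota-Baxter identity is independent of $\lambda$ (since every $I\in A_n$ has $|I|=1$, so $\lambda^{|I|-1}=1$) and hence coincides with the classical weight-zero identity, and that in the skew-symmetric case the three generators $\svec{\omega}{e_{\{i\}}}$ collapse to a single one under the $\BS_3$-action---make explicit what the paper leaves implicit.
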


\subsection{Splittings and relative Rota-Baxter operators}
\mlabel{ss:O}
We generalize the concepts of a module and a relative Rota-Baxter operator \cite{BGN}. For simplicity, we also assume that the weight of a relative \spl-Rota-Baxter operator is one. As remarked above, this still include the case of weight 0 with a suitable choice of the \conf $\confm$ (namely when $\confm=\cala$).

To motivate our general definition of modules for a $\calp$-algebra where $\calp$ is any operad, we recall that an $A$-bimodule $M$ for an associative algebra $A$ can be equivalently defined to be an abelian group $M$ together with two actions $\ell_1, \ell_2:A\ot M\to M$ of $A$ such that the binary operation $\cdot$ on $A\oplus M$ defined by
$$ (a,m)\cdot (b,n):=(ab,\ell_1(a)n+\ell_2(b)m), \quad a, b\in A, n, m\in M,$$
turns $A\oplus M$ into an associative algebra.

\begin{defn}\label{defn:totallymodule}
{\rm Let $\calp=\calt(V)/(R)$ be an operad defined by an $\BS$-module $\gensp=\bigoplus\limits_{n\geq 1} \gensp_n$ with basis $\genbas=\bigoplus\limits_{n\geq 1} \genbas_n$ and by relations $R$. Let $A$ be a $\calp$-algebra and $\confm$ be an $\BS$-invariant \conf.
\begin{enumerate}
\item
Let $U$ be a $\bfk$-module. For each $\omega \in \genbas$, denote the arity of $\omega$ by $|\omega|$. Suppose there are linear maps
$$
l_{I}^{\omega}:A^{\otimes (|\omega|-|I|)}\ot U^{\ot{|I|}} \rightarrow U,  I=\{i_{1} <\cdots < i_{t}\} \in \confn_{|\omega|},
$$
such that $A\oplus U$ is turned into a $\mathcal{P}$-algebra by defining
the operations $\widetilde {\omega}$ on $A\oplus U$ by:
\begin{eqnarray}
\lefteqn{\widetilde{\omega} ((x_1,u_1),\cdots,(x_{|\omega|},u_{|\omega|}))} \notag\\
&:=&\Big(\omega(x_1,\cdots, x_{|\omega|}),
\sum_{I \in \confn_{|\omega|}} l^{\omega}_{I} (x_1,\cdots, x_{i_{1}-1},x_{i_{1}+1},\cdots, x_{i_{t}-1}, x_{i_{t}+1},\cdots, x_{|\omega|})(u_{i_{1}},u_{i_{2}}, \cdots, u_{i_{t}})\Big), \label{eq:defnop}
\end{eqnarray}
for all $\omega \in \genbas, x_j \in A$ and $u_j \in U, 1\leq j\leq |\omega|$. Then $U= (U, \{l_{I}^{\omega} \,|\, \omega\in \genbas,  I \in \confn_{|\omega|}\} )$ is called a {\bf \spl-module for the $\calp$-algebra $A$} or simply an {\bf $A$-\spl-module}.

\item
Let $U= (U, \{l_{I}^{\omega} \,|\, \omega\in \genbas, I \in \confn_{|\omega|}\} )$ be an $A$-\spl-module.
A linear map $\alpha: U \longrightarrow A$ is called a {\bf relative \spl-Rota-Baxter operator} (of weight one) on the $A$-\spl-module $U$ if
\begin{eqnarray*}
\lefteqn{\omega(\alpha(u_{1}),\alpha(u_{2}),\cdots, \alpha(u_{|\omega|}))}\\
&=& \sum_{I \in \confn_{|\omega|}} \alpha \big(l^{\omega}_{I} (\alpha(u_1),\cdots, \alpha(u_{i_{1}-1}), \alpha(u_{i_{1}+1}),\cdots,
\alpha(u_{i_{t}-1}), \alpha(u_{i_{t}+1}),\cdots, \alpha(u_{|\omega|}))(u_{i_{1}},u_{i_{2}}, \cdots, u_{i_{t}}) \big),
\end{eqnarray*}
for all $\omega \in \genbas$ and $x_j \in A, u_j \in U$, where $I = \{i_{1},i_{2},\cdots, i_{t}\}$ with $i_{1} < i_{2} < \cdots < i_{t}$.
\end{enumerate}
}
\end{defn}
To simplify the notations we will use the following abbreviations. For $\bfk$-modules $X, Y$, linear operator $\alpha:X\to Y$, vectors $\vec{x}=(x_1,\cdots,x_n)\in X^n$, $\vec{y}=(y_1,\cdots,y_n)\in Y^n$ and $I=\{i_1<\cdots <i_t\}\subseteq [n]$, denote
\begin{eqnarray}
\vec{x}_I &: = & (x_1,\cdots, x_{i_1-1},x_{i_1+1},\cdots, x_{i_t-1},x_{i_t+1},\cdots,x_n), \notag\\
{}_I\vec{x} &:=& (x_{i_1},\cdots, x_{i_t}), \notag\\
\vec{x_I y}&:=& (x_1,\cdots,x_{i_1-1},y_{i_1},x_{i_1+1},\cdots, x_{i_t-1},y_{i_t},x_{i_t+1},\cdots,x_n),\mlabel{eq:alt}\\
(\vec{x},\vec{y})&:=&((x_1,y_1),\cdots, (x_n,y_n))\in (X\oplus Y)^n,\notag\\
(\vec{x},0)&:=& ((x_1,0),\cdots,(x_n,0))\in (X\oplus Y)^n,\notag\\
(\alpha(\vec{x}))&:=& (\alpha(x_1),\cdots,\alpha(x_n)).\notag
\end{eqnarray}
Thus in the above definition, we have
$$\widetilde{\omega}(\vec{x},\vec{u}) =\left(\omega(\vec{x}), \sum_{I\in C_{|\omega|}} l^{\omega}_I(\vec{x}_I)({}_I\vec{u})\right), \quad \omega(\alpha(\vec{u}))= \sum_{I\in C_{|\omega|}} \alpha(l^{\omega}_I(\alpha(\vec{u})_I) ({}_I\vec{u})).$$

\begin{exam}
{\rm With the same notation as in Definition \ref{defn:totallymodule}, a \spl-Rota-Baxter operator of weight one on $A$ is a relative \spl-Rota-Baxter operator of weight one on the \spl-module $(A, \{l_{I}^{\omega}\,|\, \omega\in \genbas, I \in \confn_{|\omega|} \})$, where
with the notation in Eq.~(\mref{eq:alt}), define
$$l^{\omega}_{I} (\vec{x}_I)({}_I\vec{u}):= \omega(\vec{x_I\, u})
$$
for all  $\omega \in \genbas$, $x_{j} \in A$, $1 \leq j \leq |\omega|$,  $ I \in \confn_{|\omega|}$ and $I = \{i_{1},\cdots, i_{t}\}$ with $i_{1} < i_{2} < \cdots < i_{t}$.
}
\end{exam}

\begin{prop}
Let $A$ be a $\calp$-algebra, $\confm$ be an $\BS$-invariant \conf and $U= (U, \{l_{I}^{\omega} \,|\, \omega\in \genbas,  I \in \confn_{|\omega|}\} )$ be an $A$-\spl-module.
\begin{enumerate}
\item If $\confm=\cala$ and take $\ell^\omega_I=0$ when $|I|>1$, then $U$ is the usual module in the context of general $\calp$-algebra \mcite{MV,Sch}.
\mlabel{it:moda}
\item If $\confm=\calb$, then $U$ has a $\calp$-algebra structure.
\mlabel{it:modb}
\end{enumerate}
\mlabel{pp:mod}
\end{prop}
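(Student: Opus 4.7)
The plan is to prove both parts by directly exploiting the defining property of an $A$-\spl-module, namely that the semidirect-product formula Eq.~(\mref{eq:defnop}) turns $A\oplus U$ into a $\calp$-algebra. The two statements then follow by specializing this data to the \confs $\cala$ and $\calb$ respectively and reading off the surviving structure.

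For part~(\mref{it:moda}), the \conf $\cala=(A_n)$ contains only the singleton subsets, so the only maps in the structure are $l^\omega_{\{i\}}\colon A^{\otimes(|\omega|-1)}\otimes U\to U$, one for each position $i\in[|\omega|]$, and the hypothesis $l^\omega_I=0$ for $|I|>1$ is vacuous. Substituting into Eq.~(\mref{eq:defnop}) gives
\[
\widetilde{\omega}\bigl((x_1,u_1),\ldots,(x_n,u_n)\bigr)=\Bigl(\omega(\vec{x}),\ \sum_{i=1}^{n} l^\omega_{\{i\}}(x_1,\ldots,\widehat{x_i},\ldots,x_n)(u_i)\Bigr),
\]
which is precisely the standard ``abelian extension'' formula by which a module for a $\calp$-algebra is defined in~\mcite{MV,Sch}. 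Thus requiring $A\oplus U$ to satisfy the relations of $\calp$ is exactly the usual module axiom, and the two notions coincide.

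For part~(\mref{it:modb}), take $\confm=\calb=(B_n)$, so in particular the full set $[n]$ belongs to $\confn_n$ for every $n$ and we obtain an $n$-ary operation $l^\omega_{[n]}\colon U^{\otimes n}\to U$ for each generator $\omega\in\genbas_n$. The key observation is that the subspace $0\oplus U\subseteq A\oplus U$ is closed under every $\widetilde\omega$: evaluating Eq.~(\mref{eq:defnop}) at $(0,u_1),\ldots,(0,u_n)$, the first component is $\omega(0,\ldots,0)=0$ by multilinearity of $\omega$, while for any proper subset $I\subsetneq[n]$ the map $l^\omega_I$ receives at least one zero $A$-argument and therefore vanishes by its multilinearity. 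Only the term $I=[n]$ survives, giving
\[
\widetilde{\omega}\bigl((0,u_1),\ldots,(0,u_n)\bigr)=\bigl(0,\ l^\omega_{[n]}(u_1,\ldots,u_n)\bigr).
\]
Hence $0\oplus U$ is a $\calp$-subalgebra of $A\oplus U$, and the isomorphism $U\cong 0\oplus U$ transports a $\calp$-algebra structure onto $U$ whose operations are precisely the $l^\omega_{[n]}$. The relations of $\calp$ descend automatically because they already hold in $A\oplus U$.

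There is no substantive obstacle: the only point requiring care is the multilinearity argument killing the terms with $I\subsetneq[n]$ in part~(\mref{it:modb}), and the unwinding of the notation $\widetilde\omega$ in part~(\mref{it:moda}) to match the classical module definition. Both are straightforward once the abbreviations in Eq.~(\mref{eq:alt}) are in hand.
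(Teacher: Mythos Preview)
Your proposal is correct and follows the same approach as the paper. For part~(\mref{it:moda}) the paper simply says it is clear from the definition, which is exactly your unwinding of Eq.~(\mref{eq:defnop}) when $\confm=\cala$; for part~(\mref{it:modb}) the paper likewise sets $x_1=\cdots=x_{|\omega|}=0$ in Eq.~(\mref{eq:defnop}) and observes that only the $I=[|\omega|]$ term survives, giving the $\calp$-algebra operations $\omega_U(\vec u):=l^\omega_{[|\omega|]}(1_\bfk)(\vec u)$ on $U$, which is precisely your subalgebra argument for $0\oplus U$.
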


\begin{proof}
(\mref{it:moda}) is clear from the definition.
\smallskip

\noindent
(\mref{it:modb})
By the definition of an $A$-\spl-module and let $x_{1} = x_{2} = \cdots = x_{|\omega|} =0$ in Eq.(\ref{eq:defnop}), the operations $\omega_{U}(\vec{u}) := l_{[|\omega|]}^{\omega}(1_{\bfk})(\vec{u})$ make $U$ into a $\calp$-algebra.
\end{proof}

\begin{lemma}\label{lem:relative}
Let $(U, \{l_{I}^{\omega}\,|\, \omega\in \genbas, I \in \confn_{|\omega|}\})$ be an $A$-\spl-module, $\alpha: U \longrightarrow A$ be a linear map. Define $\alpha':  A \oplus U  \longrightarrow A \oplus U$ by
\begin{equation}
\alpha' (x,u): = (\alpha(u),0).
\mlabel{eq:a'}
\end{equation}
Then $\alpha$ is a relative \spl-Rota-Baxter operator of weight one if and only if $\alpha'$ is a \spl-Rota-Baxter operator of weight one on the $\calp$-algebra $A \oplus U$.
\end{lemma}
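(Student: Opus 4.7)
My plan is to prove both implications by a single direct computation, evaluating both sides of the \spl-Rota-Baxter equation for $\alpha'$ applied to a general vector $\vec{X}=((x_1,u_1),\dots,(x_n,u_n))\in (A\oplus U)^n$, and matching the outcome against the defining identity of a relative \spl-Rota-Baxter operator. Two structural observations will drive the whole computation: since $\alpha'(x,u)=(\alpha(u),0)$, the image of $\alpha'$ lies in $A\oplus\{0\}$ and $(\alpha')^2=0$; and each module map $l_I^\omega$ is multilinear in its $U$-arguments, so any summand containing a zero $U$-argument automatically vanishes.

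First I would compute the left-hand side $\widetilde{\omega}(\alpha'(X_1),\dots,\alpha'(X_n))$. The $j$-th input is $(\alpha(u_j),0)$, so by Eq.~(\ref{eq:defnop}) the first coordinate equals $\omega(\alpha(u_1),\dots,\alpha(u_n))$, while every $l_I^\omega$-term in the second coordinate vanishes since all its $U$-arguments are $0$. Thus the LHS equals $(\omega(\alpha(\vec{u})),0)$. Next I would compute the right-hand side $\alpha'\bigl(\sum_{I\in \confn_n}\widetilde{\omega}(\overline{\alpha'}_I(X_1),\dots,\overline{\alpha'}_I(X_n))\bigr)$. Because $\alpha'$ discards the first coordinate and passes the second through $\alpha$, only the second coordinate of the inner sum is relevant. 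For each fixed $I$, the $j$-th input $\overline{\alpha'}_I(X_j)$ has $U$-coordinate $u_j$ if $j\in I$ and $0$ if $j\notin I$; multilinearity then kills any $l_J^\omega$ summand unless $J\subseteq I$. Swapping the order of summation and performing the multilinear expansion in the remaining $A$-arguments, the combinatorial collapse afforded by the $\BS$-invariant \conf $\confm$ reduces the second coordinate of the inner sum to $\sum_{J\in \confn_n} l_J^\omega(\alpha(\vec{u})_J)({}_J\vec{u})$, with $\alpha(\vec{u})_J$ interpreted as in Eq.~(\ref{eq:alt}).

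Matching coordinates, the \spl-Rota-Baxter equation for $\alpha'$ will be equivalent to
\begin{equation*}
\omega(\alpha(u_1),\dots,\alpha(u_n))=\alpha\Bigl(\sum_{I\in \confn_n} l_I^\omega(\alpha(\vec{u})_I)({}_I\vec{u})\Bigr),
\end{equation*}
which is exactly the defining condition for $\alpha$ to be a relative \spl-Rota-Baxter operator of weight one, thereby establishing both directions of the equivalence simultaneously. The main technical obstacle will lie in the combinatorial collapse invoked above: one must verify that the double sum over pairs $(I,J)$ with $J\subsetneq I$ reorganizes into the clean form on the right-hand side without leaving behind residual summands depending on the arbitrary elements $x_1,\dots,x_n\in A$, and this is where the multilinearity of each $l_J^\omega$ in its remaining $A$-arguments, combined with the summation structure encoded in $\confm$, plays its decisive role.
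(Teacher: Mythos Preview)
Your approach is the same direct computation the paper carries out, and you correctly note that multilinearity in the $U$-arguments kills the $l_J^\omega$-summand whenever $J\not\subseteq I$. The gap lies exactly where you place it: the terms with $J\subsetneq I$ do \emph{not} collapse. For such a pair the $A$-arguments of $l_J^\omega$ at the positions $k\in I\setminus J$ are the free variables $x_k$, and neither multilinearity nor the \conf structure removes them. Concretely, for $\confm=\calb$ and $|\omega|=2$, after applying $\alpha'$ the right-hand side of the Rota--Baxter identity for $\alpha'$ becomes
\[
\alpha\!\Big(l^{\omega}_{\{1\}}(\alpha(u_2))(u_1)+l^{\omega}_{\{2\}}(\alpha(u_1))(u_2)+l^{\omega}_{\{1,2\}}(u_1,u_2)\Big)
+\alpha\!\Big(l^{\omega}_{\{1\}}(x_2)(u_1)+l^{\omega}_{\{2\}}(x_1)(u_2)\Big),
\]
and the second bracket is precisely the residue you anticipated. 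It is genuinely nonzero: take $A=U=\bfk$ with all structure maps given by field multiplication (so that $A\oplus U\cong\bfk\times\bfk$ is associative) and $\alpha(u)=-u$. Then $\alpha$ is a relative $\calb$-Rota--Baxter operator of weight one, yet $\alpha'$ fails the Rota--Baxter identity already on $(x_1,u_1)=(1,0)$, $(x_2,u_2)=(0,1)$. Thus the forward implication is false for general $\confm$, and no ``combinatorial collapse'' can rescue it.

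The reverse implication \emph{does} hold, and your computation proves it once you specialise the Rota--Baxter identity for $\alpha'$ to $x_1=\cdots=x_n=0$: then the $A$-arguments of $l_J^\omega$ at positions $k\in I\setminus J$ are all zero, so multilinearity in the $A$-arguments (not only the $U$-arguments) forces $I=J$ and one recovers the relative identity for $\alpha$. The paper's own argument makes the same slip you were trying to justify---it asserts that $I\neq J$ already forces a zero in ${}_J(0_I\vec u)$, which is only true when $J\not\subseteq I$. For the \conf $\cala$ every $I,J\in\confn_n$ is a singleton, so $J\subseteq I$ does force $I=J$ and the biconditional is correct there; beyond that case only the direction ``$\alpha'$ Rota--Baxter $\Rightarrow$ $\alpha$ relative Rota--Baxter'' survives.
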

\begin{proof}
Since $(U, \{l_{I}^{\omega}\,|\, \omega\in \genbas, I \in \confn_{|\omega|}\})$ is an $A$-\spl-module, by definition, $A \oplus U $ has a $\calp$-algebra structure by the operations $\widetilde{\omega}, \omega\in \genbas$.

$\alpha:U\to A$ is a \spl-relative Rota-Baxter operator of weight one means
$$\omega(\alpha(\vec u))
= \sum_{I \in \confn_{|\omega|}} \alpha \big( l^{\omega}_{I} \big(\alpha(\vec{u})_I \big) ({}_I\vec{u}) \big).
$$
while $\alpha':A\oplus U\to A\oplus U$ is a \spl-Rota-Baxter operator of weight one means

\begin{eqnarray*}
\widetilde{\omega}(\alpha'(\vec x,\vec u))
&=& \alpha' \left( \sum_{I\in \confn_{|\omega|}} \widetilde{\omega}(\alpha'(\vec x,\vec u)_I\,(\vec x,\vec u))\right)= \alpha' \left( \sum_{I\in \confn_{|\omega|}}
\widetilde{\omega}((\alpha(\vec u),0)_I\,(\vec x, \vec u))\right) \\
&=& \alpha' \left(\sum_{I\in \confn_{|\omega|}}
(\widetilde{\omega}(\alpha(\vec u)_I\, \vec x, 0_I\,\vec u))\right)
=\alpha'\left(\sum_{I\in \confn_{|\omega|}}
\omega(\alpha(\vec u)_I\, \vec x), \sum_{J\in \confn_{|\omega|}}l_J^\omega((\alpha(\vec u)_I,\vec x)_J)({}_J\,(0_I\,\vec u))\right)\\
&=& \left(\sum_{I\in \confn_{|\omega|}}
\sum_{J\in \confn_{|\omega|}}
\alpha \left(l_J^\omega((\alpha(\vec u)_I,\vec x)_J)({}_J\,(0_I\,\vec u))\right), 0\right).
\end{eqnarray*}
\delete{
\begin{eqnarray*}
\widetilde{\omega}(\alpha'(\vec x,\vec u))
&=& \alpha' \left( \sum_{I\in \confn_{|\omega|}} \widetilde{\omega}(\alpha'(\vec x,\vec u)_I\,(\vec x,\vec u))\right)= \alpha' \left( \sum_{I\in \confn_{|\omega|}}
\widetilde{\omega}((\alpha(\vec x),0)_I\,(\vec x, \vec u))\right) \\
&=& \alpha' \left(\sum_{I\in \confn_{|\omega|}}
(\widetilde{\omega}(\alpha(\vec x)_I\, \vec x, 0_I\,\vec u))\right)
=\alpha'\left(\sum_{I\in \confn_{|\omega|}}
\omega(\alpha(\vec x)_I\, \vec x), \sum_{J\in \confn_{|\omega|}}l_J^\omega((\alpha(\vec x)_I,\vec x)_J)({}_J\,(0_I\,\vec u))\right)\\
&=& \left(\sum_{I\in \confn_{|\omega|}}
\sum_{J\in \confn_{|\omega|}}
\alpha \left(l_J^\omega((\alpha(\vec x)_I,\vec x)_J)({}_J\,(0_I\,\vec u))\right), 0\right).
\end{eqnarray*}
}

If $I \neq J$, there exists $0$-tuples in the vector ${}_J\,(0_{I}\, \vec u)$, since $l_{J}^{\omega}$ is multilinear, then we have
$$
l_{J}^{\omega} ((\alpha(\vec u)_I,\vec x)_J)({}_J\,(0_I\,\vec u))=0
$$
and it is easy to see that
$$ \sum_{I\in \confn_{|\omega|}}
\sum_{J\in \confn_{|\omega|}}
\alpha \left(l_J^\omega((\alpha(\vec u)_I,\vec x)_J)({}_J\,(0_I\,\vec u))\right) =
\sum_{I \in \confn_{|\omega|}} l^{\omega}_{I} \alpha\big(\alpha(\vec{u})_I) ({}_I\vec{u}).
$$

\delete{\begin{eqnarray*}
&&\alpha' \big( \widetilde{\omega} (\alpha'(x_{1},u_{1}),\cdots, \alpha'(x_{i_{1}-1}, u_{i_{1}-1}), (x_{i_{1}}, u_{i_{1}}), \alpha'(x_{i_{1}+1}, u_{i_{1}+1}),\cdots, \\
&&\quad \quad \alpha'(x_{i_{t}-1 }, u_{i_{t}-1}),(x_{i_{t}},u_{i_{t}}), \alpha'(x_{i_{t}+1},u_{i_{t}+1}),\cdots, \alpha'(x_{|\omega|}, u_{|\omega|}))\big)\\
&&=\alpha' \big( \widetilde{\omega} ((\alpha(u_{1}),0),\cdots, (\alpha(u_{i_{1}-1}),0), (x_{i_{1}}, u_{i_{1}}),(\alpha(u_{i_{1}+1}),0),\cdots, \\
&&\quad \quad (\alpha(u_{i_{t}-1}),0),(x_{i_{t}},u_{i_{t}}), (\alpha(u_{i_{t}+1}),0),\cdots, (\alpha(u_{|\omega|}),0))\big)\\
&&=\alpha'\Big(\omega(\alpha(u_{1}),\cdots, \alpha(u_{i_{1}-1}), u_{i_{1}}, \alpha(u_{i_{1}+1}),\cdots, \alpha(u_{i_{t}-1}), u_{i_{t}}, \alpha(u_{i_{t}+1}),\cdots, \alpha(u_{|\omega|})), \\
&& \quad \quad l^{\omega}_{I} \big(\alpha(u_1),\cdots, \alpha(u_{i_{1}-1}),\alpha(u_{i_{1}+1}),\cdots, \alpha(u_{i_{t}-1}), \alpha(u_{i_{t}+1}), \cdots,\alpha(u_{|\omega|}) \big) (u_{i_{1}},u_{i_{2}}, \cdots, u_{i_{t}}) \Big)\\
&&= \big( \alpha ( l^{\omega}_{I} \big(\alpha(u_1),\cdots, \alpha(u_{i_{1}-1}),\alpha(u_{i_{1}+1}),\cdots, \alpha(u_{i_{t}-1}), \alpha(u_{i_{t}+1}), \cdots,\alpha(u_{|\omega|}) \big) (u_{i_{1}},u_{i_{2}}, \cdots, u_{i_{t}})  ),0 \big).
\end{eqnarray*}
}
Hence $\alpha$ is a relative \spl-Rota-Baxter operator of weight one on module $U$ if and only if $\alpha'$ is a \spl-Rota-Baxter operator of weight one on $A \oplus U$.
\end{proof}

We have the following generalization of Theorem~\mref{thm:triav}.

\begin{theorem}\label{thm:ooperator}
 Let $\calp=\calt(V)/(R)$ be an operad defined by an $\BS$-module $\gensp=\bigoplus\limits_{n\geq 1} \gensp_n$ with basis $\genbas=\bigoplus\limits_{\geq 1} \genbas_n$ and by relations $R$. Let $\confm$ be an $\BS$-invariant \conf. Let $A$ be a $\calp$-algebra and $(U, \{l_{I}^{\omega}\,|\, \omega\in \genbas, I \in \confn_{|\omega|}\})$ be an $A$-\spl-module.
Let $\alpha: U \longrightarrow A$ be a relative \spl-Rota-Baxter operator of weight one on the $A$-\spl-module $U$.
For $\omega\in \genbas$ and $ I \in \confn_{|\omega|}$, define
\begin{equation}
\svec{\omega}{e_{I}}(\vec u)
:= l^{\omega}_{I} \big(\alpha(\vec{u})_I)({}_I\vec{u}), \label{eq:opweight}
\end{equation}
for $ u_{i} \in U$, $I \in \confn_{|\omega|}$ and $I = \{i_{1},\cdots, i_{t}\}$ with $i_{1} < i_{2} < \cdots < i_{t}$. Then
$$
\left(U, \left\{\svec{\omega}{e_{I}}\,\Big|\, \omega\in \genbas, I \in \confn_{|\omega|} \right\}\right)
$$
is a $\ssp(\opd)$-algebra.
Moreover, when the splitting is arity-splitting or power-splitting, there is a $\mathcal P$-algebra structure on $U$ given by
\begin{equation}
\svec{\omega}{\star}=\sum_{I \in \confn_{|\omega|}} \svec{\omega}{e_{I}},\;\; \omega \in \genbas,
\notag
\end{equation}
and $\alpha$ is a homomorphism of $\mathcal P$-algebras.
\end{theorem}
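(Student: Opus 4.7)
The plan is to reduce the statement to the \spl-Rota-Baxter case already handled in Theorem~\mref{thm:biav}, using the correspondence supplied by Lemma~\mref{lem:relative} between relative operators on $U$ and Rota-Baxter operators on the split extension $A\oplus U$, and then to transfer the resulting $\ssp(\calp)$-structure from $A\oplus U$ down to $U$ and recognize the restricted operations.

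First I would invoke Lemma~\mref{lem:relative} to conclude that $\alpha':A\oplus U\to A\oplus U$, $(x,u)\mapsto(\alpha(u),0)$, is a \spl-Rota-Baxter operator of weight one on the $\calp$-algebra $A\oplus U$ (whose operations $\widetilde\omega$ are defined by Eq.~(\mref{eq:defnop})). Theorem~\mref{thm:biav}(2) then supplies a $\ssp(\calp)$-algebra structure on $A\oplus U$ with operations $\svec{\omega}{e_I}((\vec x,\vec u)):=\widetilde{\omega}\circ(\alpha')^{\otimes|\omega|,I}((\vec x,\vec u))$ for $I\in\confn_{|\omega|}$. The key step is to verify that the submodule $\{0\}\oplus U\cong U$ is stable under every $\svec{\omega}{e_I}$ and that the restricted operation agrees with Eq.~(\mref{eq:opweight}). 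Feeding $(\vec 0,\vec u)$ into $\svec{\omega}{e_I}$, the $j$-th entry after $(\alpha')^{\otimes|\omega|,I}$ is $(0,u_j)$ if $j\in I$ and $(\alpha(u_j),0)$ if $j\notin I$. By multilinearity of $\omega$, the first component vanishes (since $I\neq\emptyset$ forces a zero input). In the second component $\sum_{J\in\confn_{|\omega|}} l^\omega_J(\cdots)({}_J(\cdots))$, multilinearity of each $l^\omega_J$ forces the summand to vanish unless $J\subseteq I$ (else ${}_J\vec m$ has a zero entry) and $I\subseteq J$ (else $(\alpha(\vec u)_I\,\vec 0)_J$ has a zero $A$-entry); hence only $J=I$ survives, giving exactly $l^\omega_I(\alpha(\vec u)_I)({}_I\vec u)$, as required.

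For the last assertion, when $\confm=\cala$ or $\confm=\calb$ the hypotheses of Theorem~\mref{pp:suast}\,(\mref{it:operad1}) or (\mref{it:operad2}) are met, producing an operad morphism $\calp\to\ssp(\calp)$ that sends each generator $\omega$ to $\svec{\omega}{\star}=\sum_{I\in\confn_{|\omega|}}\svec{\omega}{e_I}$. Pulling back the $\ssp(\calp)$-structure on $U$ along this morphism endows $U$ with a $\calp$-algebra structure, and the claim that $\alpha$ is a $\calp$-homomorphism then reduces to the identity $\omega(\alpha(\vec u))=\sum_{I\in\confn_{|\omega|}}\alpha\big(l^\omega_I(\alpha(\vec u)_I)({}_I\vec u)\big)$, which is precisely the defining relation of a relative \spl-Rota-Baxter operator of weight one. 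The only real obstacle, and the sole calculation requiring care, is the multilinearity bookkeeping that isolates $J=I$ in the restriction step; everything else is a direct assembly of Lemma~\mref{lem:relative}, Theorem~\mref{thm:biav}, and Theorem~\mref{pp:suast}.
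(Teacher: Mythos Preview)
Your proposal is correct and follows essentially the same route as the paper: invoke Lemma~\mref{lem:relative} to pass to the Rota-Baxter operator $\alpha'$ on $A\oplus U$, apply Theorem~\mref{thm:biav} to obtain the $\ssp(\calp)$-structure there, identify $\{0\}\oplus U$ as a closed subalgebra, and transport the structure; the final statement is handled via Theorem~\mref{pp:suast} and the defining identity of the relative operator. Your explicit restriction to $\vec x=\vec 0$ before isolating the $J=I$ term by multilinearity is in fact slightly cleaner than the paper's displayed computation, which writes the formula for general $(\vec x,\vec u)$ but only needs (and only correctly uses) the $\vec x=\vec 0$ case.
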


\begin{proof}
Let $\alpha$ be a relative \spl-Rota-Baxter operator of weight one. By Lemma \ref{lem:relative}, $\alpha'$ in Eq.~(\mref{eq:a'}) is a Rota-Baxter operator of weight one on $A \oplus U$. It follows from Theorem \ref{thm:triav} that the operations
$$
\svec{\widetilde{\omega}}{e_{I}}((\vec x, \vec u)):= \widetilde{\omega} \circ \alpha'^{\ot |\omega|, I} ((\vec x, \vec u)), \quad \omega\in \genbas, I\in \confn_{|\omega|}
$$
make $A \oplus U$ into a $\ssp(\opd)$-algebra. Furthermore,  we have
$$ \widetilde{\omega}\circ \alpha'^{\ot |\omega|,I} ((\vec x,\vec u))
=\widetilde{\omega}((\alpha(\vec u),0)_I)({}_I\,(\vec x,\vec u))
= (\omega(\alpha(\vec u)_I\,\vec x), \svec{\omega}{e_I}(\vec u)).$$
\delete{\begin{eqnarray*}
&&\widetilde{\omega} \circ \alpha'^{\ot |\omega|, I} ((x_{1},u_{1}),\cdots,(x_{n},u_{n}))\\
&& = \widetilde{\omega} \big((\alpha(u_{1}),0),\cdots, (\alpha(u_{i_{1}-1}),0), (x_{i_{1}}, u_{i_{1}}),(\alpha(u_{i_{1}+1}),0),\cdots, \\
&&\quad \quad  \quad (\alpha(u_{i_{t}-1}),0),(x_{i_{t}},u_{i_{t}}), (\alpha(u_{i_{t}+1}),0),\cdots, (\alpha(u_{|\omega|}),0)\big)\\
&& = ( \omega(\alpha(u_1),\cdots, \alpha(u_{i_{1}-1}),x_{i_{1}},\alpha(u_{i_{1}+1}),\cdots, \alpha(u_{i_{t}-1}), x_{i_{t}},\alpha(u_{i_{t}+1}), \cdots, \alpha(u_{|\omega|})) , \svec{\omega}{e_{I}}(u_{1}, \cdots, u_{|\omega|})).
\end{eqnarray*}
}
Obviously, $\overline{U}:= \{(0, v)~|~v \in U \}$ is a sub-$\ssp(\calp)$-algebra of of $A\oplus U$. By transporting of structures, we obtain an $\ssp(\calp)$-algebra structure on $U$. This is precisely the one defined in Eq.~(\ref{eq:opweight}). The last statement of the theorem follows from a direct computation using Proposition~\mref{pp:suast}.(\mref{it:operad1}).
%This theorem can also be proved by a straightforward proof.
\end{proof}

The following result gives an inverse of Theorem~\mref{thm:triav}, in the sense that any $\dsu(\calp)$ or $\tsu(\calp)$-algebra can be derived from a relative \spl-Rota-Baxter operator of weight one. See~\mcite{BGN,Uc} for the case of dendriform algebra and tridendriform algebra.
\begin{theorem}
Let $\calp=\calf(V)/(R)$ be as defined in Theorem~\mref{thm:ooperator} and $\confm$ be the \conf with index $1$ or $\infty$. Let $A$ be a given $\ssp(\mathcal{P})$-algebra with operations
$\left\{\svec{\omega}{e_{I}}\,\Big|\, \omega\in \genbas, I \in \confn_{|\omega|} \right\}$. For all $\vec{x}\in A^{|\omega|}, I= \{i_{1}<\cdots <i_{t}\}\in \confn_{|\omega|}$, define
$$
l^{\omega}_{I} ((\vec{x})_I)({}_I\,\vec{x}) = \svec{\omega}{e_I}(\vec{x}).
$$
Then $(A, \{l_{I}^{\omega}\,|\,  \omega\in \genbas, I \in \confn_{|\omega|}\})$ is a $\confm$-module for the $\mathcal P$-algebra $(A,\genbas_{\star})$, where
$$
\genbas_{\star} = \Big\{\,\svec{\omega}{\star}:=\sum_{I \in \confn_{|\omega|}}\svec{\omega}{e_{I}}\,\big|\, \omega \in \genbas \Big\}.
$$
Further, the identity linear map $\id:A\to A$ is a relative Rota-Baxter operator of weight one for the $\mathcal P$-algebra $(A,\genbas_{\star})$ associated to the $\confm$-module $(A, \{l_{I}^{\omega}\,|\,  \omega\in \genbas, I \in \confn_{|\omega|}\})$. Finally the $\ssp(\calp)$-algebra from the $\calp$-algebra $(A,\genbas_\star)$ obtained from the relative Rota-Baxter operator $\id$ by Theorem~\mref{thm:ooperator} is precisely $\left(A,\left\{\svec{\omega}{e_{I}}\,\Big|\, \omega\in \genbas, I \in \confn_{|\omega|} \right\}\right)$.
\label{thm:suoop}
\end{theorem}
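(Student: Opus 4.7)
The plan is to verify the three assertions in turn. Extending $l^\omega_I$ by multilinearity from its defining equation gives $l^\omega_I(\vec{x}_I)({}_I\vec{u})=\svec{\omega}{e_I}(\vec{x_I u})$ in the notation of Eq.~(\mref{eq:alt}), so the candidate operations on $A\oplus A$ from Eq.~(\mref{eq:defnop}) read
\[
\widetilde{\omega}(\vec{x},\vec{u})=\Big(\omega(\vec{x}),\,\sum_{I\in\confn_{|\omega|}}\svec{\omega}{e_I}(\vec{x_I u})\Big),
\]
where $\omega(\vec{x})=\svec{\omega}{\star}(\vec{x})$ is the $\calp$-structure on $(A,\genbas_\star)$ provided by Theorem~\mref{pp:suast}(\mref{it:operad1}). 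The relative Rota-Baxter identity for $\id_A$ then collapses to $\omega(\vec{x})=\svec{\omega}{\star}(\vec{x})$, which is the very definition of $\svec{\omega}{\star}$, and substituting $\alpha=\id_A$ into Eq.~(\mref{eq:opweight}) returns $\svec{\omega}{e_I}$ unchanged; so both the Rota-Baxter property of $\id_A$ and the recovery of the original $\ssp(\calp)$-structure follow at once.

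The heart of the argument is showing that $A\oplus A$ endowed with $\{\widetilde{\omega}\}$ is a $\calp$-algebra, equivalently that $\widetilde{r}(\vec{x},\vec{u})=0$ for every locally homogeneous relation $r=\sum_i c_i\tau_i\in R_\calp$. I would prove this through the tree-level identity
\[
\widetilde{\tau}(\vec{x},\vec{u})=\Big(\tau(\vec{x}),\,\sum_{J\in\confn_{n}}\ssp_J(\tau)(\vec{x_J u})\Big)\qquad (n=|\lin(\tau)|)
\]
established by induction on $|\lin(\tau)|$, where $\tau(\vec{x})$ is the evaluation of $\tau$ on $(A,\genbas_\star)$ as a $\calp$-algebra and $\ssp_J(\tau)$ is the labeled tree of Definition~\mref{de:vtilo} evaluated on $A$ as a $\ssp(\calp)$-algebra. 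Granted this identity, the first coordinate of $\widetilde{r}(\vec{x},\vec{u})$ vanishes because $(A,\genbas_\star)$ is a $\calp$-algebra, while the second coordinate becomes $\sum_{J\in\confn_{|\lin(r)|}}\ssp_J(r)(\vec{x_J u})$, vanishing term by term because each $\ssp_J(r)$ lies in the relation ideal $\ssp(R_\calp)$ and $A$ is a $\ssp(\calp)$-algebra by hypothesis.

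The inductive step uses the grafted decomposition $\tau=\omega(\tau_1\vee\cdots\vee\tau_\ell)$, the recursive formula for $\ssp_J(\tau)$, and the multilinearity of $\svec{\omega}{e_I}$. Applying the inductive hypothesis branchwise and then $\widetilde{\omega}$ produces a sum indexed by a root choice $I\in\confn_\ell$ together with, for each $i\in I$, a nonempty $J_i\in\confn_{|\lin(\tau_i)|}$; the branches $i\notin I$ contribute $\tau_i(\vec{x}_{\lin(\tau_i)})=\ssp(\tau_i)(\vec{x}_{\lin(\tau_i)})=\ssp_\emptyset(\tau_i)(\vec{x}_{\lin(\tau_i)})$, identifying the $\calp$-evaluation via $\genbas_\star$ with the $\ssp(\calp)$-evaluation of $\ssp_\emptyset(\tau_i)$. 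The main obstacle is to recognize this double sum as $\sum_{J\in\confn_n}\ssp_J(\tau)(\vec{x_J u})$ via the correspondence $J\leftrightarrow (J\sqcap\omega,\,(J\cap\lin(\tau_i))_{i\in J\sqcap\omega})$. This correspondence is a bijection precisely when $\confm$ has index $1$ or $\infty$: for $\confm=\cala$ both sides parametrize singletons of $[n]$, and for $\confm=\calb$ they parametrize arbitrary nonempty subsets. This is exactly the point at which the hypothesis on the index enters the argument, and once the two sums are matched term by term the induction closes and all three assertions follow.
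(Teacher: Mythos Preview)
Your proposal is correct and follows essentially the same route as the paper: prove the tree-level identity
\[
\widetilde{\tau}(\vec{x},\vec{u})=\Big(\overline{\tau}(\vec{x}),\,\sum_{J\in\confn_{|\lin(\tau)|}}\ssp_J(\tau)(\vec{x_J u})\Big)
\]
by induction on $|\lin(\tau)|$ using the grafting decomposition, deduce that $A\oplus A$ is a $\calp$-algebra (hence $A$ is a $\confm$-module), and then read off the Rota-Baxter identity for $\id_A$ and the recovery statement directly from the definitions. Your account is in fact clearer than the paper's on one point: you explicitly isolate the bijection $J\leftrightarrow\big(J\sqcap\omega,\,(J\cap\lin(\tau_i))_{i\in J\sqcap\omega}\big)$ and observe that it is a bijection precisely when $\confm=\cala$ or $\confm=\calb$, which is exactly where the index hypothesis is needed; the paper carries out the same matching but leaves the role of the hypothesis implicit.
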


\begin{proof}
By Proposition \ref{pp:suast}, $(A, \genbas_{\star})$ is a $\calp$-algebra.
For $\omega\in \genbas$, define an operation on $A \oplus A$ by
$$
\widetilde {\svec{\omega}{\star}} (\vec x ,\vec u)
: = \Big(\svec{\omega}{\star}(\vec x), \sum_{I \in \confn_{|\omega|}} \svec{\omega}{e_{I}}(\vec{x}_I\, \vec{u}))\Big).
$$
For $\tau \in \bigcup_{t\in \mathfrak{R}} t(\genbas)$, let $\overline{\tau}$ denote the redecoration of $\tau$ with each vertex $\omega$ of $\tau$ being replaced by $\svec{\omega}{\star}$. Also let $\widetilde{\tau}$ denote the redecoration of $\tau$ with each vertex $\omega$ of $\tau$ being replaced by $\widetilde{\svec{\omega}{\star}}$. Let $\lin(\tau) = [n]$. We claim that
\begin{equation}
 \widetilde{\tau}(\vec x, \vec u)
=\left(\overline{\tau}(\vec x),\sum_{I \in \confn_{|\lin(\tau)|}} \ssp_{I} (\tau)(\vec x_I\, \vec u)\right). \label{eq:moduleweight}
\end{equation}

We prove Eq.(\ref{eq:moduleweight}) by induction on $|\lin(\tau)|\geq 1$. When $|\lin(\tau)|=1$, $\tau$ is the tree with one leaf standing for the identity map. Then we have $\widetilde{\tau} (x_{1} + u_{1}) = x_{1} + u_{1} = \overline{\tau}(x_{1}) + \ssp_{\{1\}}(\tau)(u_{1})$. Assume that the claim has been proved for $\tau$ with $|\lin(\tau)| \leq n-1$ where $n\geq 2$ and consider a $\tau$ with $|\lin(\tau)|= n$. In the decomposition $\tau= \omega (\tau_{1} \vee \tau_{2} \vee \cdots \vee \tau_{\ell})$, denote the corolla with $\ell$ leaves by $T_{\ell}$.  Let $\lin(\tau_{p}) = \{k_{p-1}+1, \cdots, k_{p}\}$ and $\overrightarrow{x^{p}} = (x_{k_{p-1}+1},x_{k_{p-1}+2},\cdots,x_{k_{p}})$ with the convention that $k_{0} =0$ and $k_{\ell} =n+1$.

Let $H^{(p,i)}$ be any element of $\confn_{|\lin(\tau_{p})|}$, $\overrightarrow{u^{p}} =(u_{k_{p-1}+1},u_{k_{p-1}+2},\cdots, u_{k_{p}})$. Then
$$
\overrightarrow{ x^{p}}_{H^{(p,i)}}\overrightarrow{u^{p}}=\big(x_{k_{p-1}+1},\cdots,x_{h_{i1}-1}, u_{h_{i1}}, x_{h_{i1}+1},\cdots, x_{h_{i a_{i}}-1}, u_{h_{i a_{i}}} , x_{h_{i a_{i}}+1}, \cdots, x_{k_{p}}\Big).
$$
Denote $\mathfrak{U}_{p} = \displaystyle \sum_{ H^{(p,i)} \in \confn_{|\lin(\tau_{p})|}} \ssp_{H^{(p,i)}}(\tau_{p}) \overrightarrow{x^{p}}_{H^{(p,i)}}\overrightarrow{u^{p}}$.
For any $I \in \confn_{n}$, there exists some $q$ such that $I$ is of the form $I=I_{j_{1}}\sqcup \cdots \sqcup I_{j_{q}}$ with
$I_{j_{b}}:=I\cap \lin(\tau_{j_b})=\{h_{b1}< \cdots< h_{ba_b}\}, 1\leq b\leq q$. By the definition of \conf, there exist $H^{(j_{b},i_{0})} \in \confn_{|\lin(\tau_{j_{b}})|}$ such that ${}_{I_{j_{b}}}\vec u = {}_{H^{(j_{b},i_{0})}} \overrightarrow{u^{j_{b}}}$. Conversely, for any choice of $J \subseteq \confn_{\ell}$ with $J = \{j_{1}<j_{2}<\cdots< j_{q}\}$ and $H^{(j_{b},i_{0})} \in \confn_{|\lin(\tau_{j_{b}})|}$, there exists $\emptyset \neq I_{j_{b}} \subseteq \lin(\tau_{j_{b}}), 1\leq b\leq q$ such that ${}_{I_{j_{b}}}\vec u = {}_{H^{(j_{b},i_{0})}} \overrightarrow{u^{j_{b}}}$, we obtain $\emptyset \neq I:= I_{j_{1}} \sqcup \cdots \sqcup I_{j_{q}}\in \confn_{n}$.

Then we have
\begin{eqnarray*}
\widetilde{\tau} (\vec x, \vec u) &=& \widetilde{\svec{\omega}{\star}}\left(\widetilde{\tau}_{1}(\overrightarrow{x^{1}},\overrightarrow{u^{1}}),\cdots, \widetilde{\tau}_{\ell}(\overrightarrow{x^{\ell}},\overrightarrow{u^{\ell}})\right) \quad (\mbox{by the definition of}~\tilde{\tau})\\
&=& \widetilde{\svec{\omega}{\star}} \Big( (\overline{\tau}_{1}(\overrightarrow{x^{1}}), \mathfrak{U}_{1}),  \cdots, (\overline{\tau}_{\ell}(\overrightarrow{x^{\ell}}), \mathfrak{U}_{\ell}) \big) \quad (\mbox{by the induction hypothesis})  \\
&=& \Big(\svec{\omega}{\star} \big(\overline{\tau}_{1}(\overrightarrow{x^{1}}),\cdots, \overline{\tau}_{\ell}(\overrightarrow{x^{\ell}})\big), \displaystyle \sum_{J \in \confn_{\ell}} \svec{\omega}{e_{J}}\Big(\overline{\tau}_{1}(\overrightarrow{x^{1}}), \cdots,  \overline{\tau}_{j_{1}-1}(\overrightarrow{x^{j_{1}-1}}), \mathfrak{U}_{j_{1}}, \overline{\tau}_{j_{1}+1}(\overrightarrow{x^{k_{j_{1}}+1}}),\\
&&\qquad \cdots, \overline{\tau}_{j_{q}-1}(\overrightarrow{x^{k_{j_{q}-1}}}),\mathfrak{U}_{j_{q}}, \overline{\tau}_{j_{q}+1}(\overrightarrow{x^{k_{j_{q}}+1}}),  \cdots, \overline{\tau}_{\ell}(\overrightarrow{x^{\ell}})\Big)\Big) \quad\left(\mbox{by the definition of}~ \widetilde{\svec{\omega}{\star}}\right)\\
&=&\Big(\svec{\omega}{\star} \big(\overline{\tau}_{1}(\overrightarrow{x^{1}}),\cdots, \overline{\tau}_{\ell}(\overrightarrow{x^{\ell}})\big), \displaystyle \sum_{J \in \confn_{\ell}} \ssp_{J}(T_{\ell})\Big(\overline{\tau}_{1}(\overrightarrow{x^{1}}), \cdots,  \overline{\tau}_{j_{1}-1}(\overrightarrow{x^{j_{1}-1}}), \mathfrak{U}_{j_{1}}, \overline{\tau}_{j_{1}+1}(\overrightarrow{x^{k_{j_{1}}+1}}),\\
&&\qquad \cdots, \overline{\tau}_{j_{q}-1}(\overrightarrow{x^{k_{j_{q}-1}}}),\mathfrak{U}_{j_{q}}, \overline{\tau}_{j_{q}+1}(\overrightarrow{x^{k_{j_{q}}+1}}),  \cdots, \overline{\tau}_{\ell}(\overrightarrow{x^{\ell}})\Big)\Big)\\
&=& \left(\overline{\tau}(\vec x), \sum_{I \in \confn_{|\lin(\tau)|}} \ssp_{I}(\tau) (\vec x_{I} \vec u )\right).
\end{eqnarray*}

Let $\calp=\calf(V)(R)$ with $R$ given by Eq. (\ref{eq:pres}), that is
\begin{equation*}
r_s:=\sum_i c_{s,i}\tau_{s,i}, \ c_{s,i}\in\bfk, \tau_{s,i} \in \bigcup_{t\in \mathfrak{R}} t(\genbas), \ 1\leq s\leq k.
\end{equation*}
Recall that $A$ is a $\ssp(\opd)$-algebra with the operations
$\left\{\svec{\omega}{e_{I}} \,\Big|\, \omega\in \genbas,  I \in \confn_{|\omega|}\right\}$ and $(A, \genbas_{\star})$ is a $\calp$-algebra.
Denote $\widetilde{\genbas}_{\star}: = \left\{\widetilde{\svec{\omega}{\star}}~|~ \omega \in \genbas\right\}$.

For a given $1\leq s\leq k$, by Lemma \mref{lem:arity}, Corollary \mref{coro:power} and the definition of $\widetilde{\svec{\omega}{\star}}$, we have

$$\widetilde{r}_{s}(\vec x, \vec u)
= \sum_{i} c_{s,i} \big(\widetilde{\tau}_{s,i}(\vec x, \vec u)\big)
= \sum_{i}c_{s,i} \left( \overline{\tau}_{s,i}(\vec x),  \sum_{I \in \confn_{n}} \ssp_{I}(\tau_{s,i})(\vec x_I\, \vec u) \right)
%=  \left(\left(\sum_{i}c_{s,i}\overline{\tau}_{s,i}\right)(\vec x), \sum_{I \in \confn_{n}} \ssp_{I}\left(\sum_{i}c_{s,i}\tau_{s,i}\right)(\vec x_I\,\vec u)\right)
=\left (\overline{r}_{s}(\vec x), \sum_{I \in \confn_{n}} \ssp_{I} (r_{s}) (\vec x_I \vec u)\right)
$$
which is $(0,0)$ since $(A, \genbas_{\star})$ is a $\calp$-algebra and $A$ is a $\ssp(\opd)$-algebra.
Thus $(A \oplus A, \widetilde{\genbas}_{\star})$ is a $\calp$-algebra. Hence $(A, \{l_{I}^{\omega} \,|\, \omega\in \genbas, I \in \confn_{|\omega|}\})$ is an $A$-\spl-module for the $\mathcal P$-algebra $(A,\genbas_{\star})$.

Further the linear map $\id: (A,\{l_{I}^{\omega} \,|\, \omega\in \genbas, I \in \confn_{|\omega|}\})\to (A,\genbas_{\star})$ is a relative \spl-Rota-Baxter operator of weight one since
$$
\svec{\omega}{\star} (\id(\vec x))
= \svec{\omega}{\star}(\vec x)
= \sum_{I \in \confn_{|\omega|}} \svec{\omega}{e_{I}}(\vec x)
=\id \Big(\sum_{I \in \confn_{|\omega|}} l^{\omega}_{I} \big(\id(\vec x_I))({}_I\,\vec x) \Big).
$$

The last statement of the theorem follows from the definition of $l_I^{\omega}$ in the theorem.
\end{proof}

\smallskip

\noindent {\bf Acknowledgements: } C. Bai would like to thank the support by NSFC (11271202, 11221091) and SRFDP (20120031110022).
L. Guo acknowledges support from NSF grant DMS 1001855.

\end{document}